\theoremstyle{plain}
\newtheorem{thm}{Theorem}[section]
\newtheorem{prp}[thm]{Proposition}
\newtheorem{lem}[thm]{Lemma}
\newtheorem{cor}[thm]{Corollary}
\newtheorem*{thmT}{Theorem T (Tate's theorem for fusion systems)}
\newtheorem*{thmY}{Theorem Y (Yoshida's theorem for fusion systems)}
\theoremstyle{definition}
\newtheorem{dfn}[thm]{Definition}
\theoremstyle{remark}
\newtheorem{exm}[thm]{Example}
\newcommand{\Hom}{\mathrm{Hom}}
\newcommand{\End}{\mathrm{End}}
\newcommand{\Aut}{\mathrm{Aut}}
\newcommand{\id}{\mathrm{id}}
\newcommand{\incl}{\mathrm{incl}}
\newcommand{\Img}{\mathrm{Im}}
\newcommand{\Ker}{\mathrm{Ker }}
\newcommand{\CF}{\mathcal{F}}
\newcommand{\CH}{\mathcal{H}}
\newcommand{\CN}{\mathcal{N}}
\newcommand{\la}{\langle}
\newcommand{\ra}{\rangle}
\newcommand{\ol}{\overline}
\def\FF{\ensuremath{\mathbb{F}}}
\newcommand{\pcomp}[1]{{#1}^\wedge_p}
\newcommand{\res}{\mathrm{res}}
\newcommand{\tr}{\mathrm{tr}}
\newcommand{\N}{\mathrm{N}}
\newcommand{\Z}{\mathrm{Z}}
\newcommand{\normal}{invariant }
\newcommand{\ZZ}{\mathbb{Z}}
\newcommand{\Ff}{\mathcal{F}}
\newcommand{\Hh}{\mathcal{H}}
\newcommand{\Nn}{\mathcal{N}}
\begin{document}
\title[Tate's and Yoshida's theorem for fusion systems]{Tate's and Yoshida's theorem on control of transfer for fusion systems}
\author{Antonio D\'iaz}
\author{Adam Glesser}
%\author{Nadia Mazza}
\author{Sejong Park}
\author{Radu Stancu}
\date{\today}
\begin{abstract}
We prove analogues of results of Tate and Yoshida on control of transfer for fusion systems. This requires the notions of $p$-group residuals and transfer maps in cohomology for fusion systems. As a corollary we obtain a $p$-nilpotency criterion due to Tate.
\end{abstract}
\maketitle

\section{Introduction}
In the theory of finite groups, the focal subgroup of a Sylow $p$-subgroup is determined entirely by $p$-fusion and detects whether the 
whole group $G$ has a nontrivial $p$-group quotient.  Moreover, under  
certain conditions, some subgroups of $G$ containing its Sylow $p$-subgroup determine the focal subgroup and hence whether $G$ has a  
nontrivial $p$-group quotient.  This phenomenon is traditionally  
called control of transfer; indeed these results can be obtained by  
using transfer maps in group cohomology.

A fusion system is a category $\CF$ whose objects are the subgroups of  
a fixed finite $p$-group $S$ and whose morphisms behave like  
conjugation maps in finite groups having $S$ as a Sylow $p$-subgroup.   
First introduced by Puig~\cite{puig:frobeniussystems},\cite{puig:frobeniuscategories} and further developed by Broto, Levi and  
Oliver~\cite{BLO2003theory}, fusion systems constitute a useful framework for studying the local theory of (blocks  
of) finite groups and $p$-local homotopy theory.  Hence it is a  
natural question whether and how classical results of local group  
theory can be extended to fusion systems.

Given a fusion system, one defines the focal subgroup (and other  
related subgroups like the hyperfocal subgroup) analogously to the group case. Moreover,  
these related constructs display the same  
key properties as their group theoretic counterparts (\cite{BCGLO2007extensions}, see  
also appendix.) In particular, using the characteristic elements of a fusion system, introduced in \cite{BLO2003theory} and refined in \cite{Ragnarsson2006spectra}, we define an appropriate notion of transfer maps in the cohomology of fusion systems.

Using these tools, we generalize to fusion systems two classical  
theorems on control of transfer in finite groups, one due to Tate and the other due to Yoshida. Tate's theorem, reformulated as in \cite{GagolaIssacs2008tate}, concerns three types of residuals of a finite group $G$: the elementary abelian $p$-group residual, the abelian $p$-group residual and the $p$-group residual. It states that, for a subgroup $H$ of $G$ containing a Sylow $p$-subgroup of $G$, $H$ has isomorphic residual to that of $G$ of one of these types if and only if $H$ does so for the three types. In any of these three cases, then, we say that $H$ \textit{controls transfer} in $G$. Yoshida's theorem \cite[Theorem 4.2]{Yoshida1978} says that if $S$ is a Sylow $p$-subgroup of $G$, then $\N_G(S)$ controls transfer in $G$ unless the wreath product $C_p \wr C_p$ is a quotient of $S$.

To generalize these results to fusion systems, we first need appropriate notions of residuals. The case of the $p$-group residual is handled in \cite{BCGLO2007extensions}, where the authors define the notion of a fusion subsystem of $p$-power index. %Furthermore,  in \cite[Theorem 4.3]{BCGLO2007extensions} %it is shown that for a saturated fusion system $\CF$ over the $p$-group $S$, the $p$-power index fusion subsystems of $\CF$ are in bijection with the %subgroups of $S$ containing the hyperfocal subgroup $O^p_{\CF}(S)$ of $\CF$. 
This motivates the following definition.

\begin{dfn}
Let $\CF$ be a saturated fusion system on a finite $p$-group $S$. 
\begin{enumerate}
\item $O^p_{\CF}(S)=\la [P,O^p(\Aut_\CF(P))]\mid P\leq S\ra$ (the \textit{hyperfocal subgroup} of $\CF$).
\item $A^p_{\CF}(S)=[S,\CF]=\la [P,\Aut_\CF(P)]\mid P\leq S\ra$ (the \textit{focal subgroup} of $\CF$).
\item $E^p_{\CF}(S)=\Phi(S)[S,\CF]=\Phi(S)O^p_{\CF}(S)$ (the \textit{elementary focal subgroup} of $\CF$). 
\end{enumerate}
\end{dfn}
%In Corollary~\ref{cor:focalhyperfocal}, we prove that $[S,\CF]=[S,S]O^p_{\CF}(S)$. The elementary abelian $p$-group residual, abelian $p$-group residual %and  $p$-group residual of $\CF$ are hence the quotient fusion systems of $\CF$ by the strongly $\CF$-closed subgroups  $E^p_{\CF}(S)$, %$A^p_{\CF}(S)$ and $O^p_{\CF}(S)$ respectively. As expected, these quotients fusion systems are the trivial fusion systems on the $p$-groups %$S/E^p_{\CF}(S)$, $S/A^p_{\CF}(S)$ and $S/O^p_{\CF}(S)$ respectively. 
Using Corollary \ref{cor:focalhyperfocal}, we have that $O^p_\CF(S) \subseteq A^p_\CF(S) \subseteq E^p_\CF(S)$ and that the former two groups are completely determined by $O^p_\CF(S)$ and $S$. Consequently, the interesting part of the following theorem, which is a generalization of Tate's theorem from \cite{GagolaIssacs2008tate}, is the implication $(\ref{intro_thm_Tate:elem}) \Longrightarrow (\ref{intro_thm_Tate:hyper})$.

\begin{thmT}\label{intro_thm_Tate} 
Let $\CF$ be a saturated fusion system on a finite $p$-group $S$, and let $\CH$ be a saturated subsystem of $\CF$ on $S$. The following are equivalent.
\begin{enumerate}
\item $E^p_\CF(S)=E^p_\CH(S)$.\label{intro_thm_Tate:elem}
\item $A^p_\CF(S)=A^p_\CH(S)$.\label{intro_thm_Tate:foc}
\item $O^p_\CF(S)=O^p_\CH(S)$.\label{intro_thm_Tate:hyper}
\end{enumerate}
\end{thmT}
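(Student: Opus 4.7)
The easy direction is the chain $(3)\Rightarrow(2)\Rightarrow(1)$, which is immediate from Corollary~\ref{cor:focalhyperfocal}: for every saturated fusion system $\CK$ on $S$ one has $A^p_\CK(S)=O^p_\CK(S)[S,S]$ and $E^p_\CK(S)=\Phi(S)O^p_\CK(S)$, and since $[S,S]\subseteq\Phi(S)$ the latter also reads $E^p_\CK(S)=\Phi(S)A^p_\CK(S)$. Feeding these identities into $\CK=\CF$ and $\CK=\CH$, equality of hyperfocal subgroups forces equality of focal subgroups, which forces equality of elementary focal subgroups.

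The content of the theorem is the reverse implication $(1)\Rightarrow(3)$. My plan is to use the transfer machinery in fusion-system cohomology built earlier from characteristic elements, in the spirit of Tate's classical argument. First, I would rephrase $(1)$ cohomologically: an $\FF_p$-valued homomorphism on $S$ automatically kills $\Phi(S)$, and $\CK$-stability forces it to kill $A^p_\CK(S)=[S,\CK]$, hence to factor through $S/E^p_\CK(S)$; thus
\[
H^1(\CK;\FF_p)\;=\;\Hom\bigl(S/E^p_\CK(S),\FF_p\bigr),
\]
and $(1)$ says precisely that $\res\colon H^1(\CF;\FF_p)\to H^1(\CH;\FF_p)$ is an isomorphism.

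Second, I would establish the fusion-system analogue of Tate's cohomological theorem: this single-degree isomorphism propagates to every degree of $\FF_p$-cohomology. Because $\CH$ shares the Sylow subgroup $S$ with $\CF$, its characteristic element provides a transfer $\tr\colon H^*(\CH;\FF_p)\to H^*(\CF;\FF_p)$ with $\tr\circ\res=\id$, so $\res$ is split injective in every degree; surjectivity is then obtained by chasing the Bockstein long exact sequences arising from $0\to\FF_p\to\ZZ/p^{n+1}\to\ZZ/p^n\to 0$ and applying the five lemma, inducting on $n$ and on cohomological degree. Third, the resulting $\FF_p$-cohomology equivalence $B\CH\to B\CF$ between the $p$-complete classifying spaces of Broto--Levi--Oliver is a weak equivalence, and taking fundamental groups yields an isomorphism $S/O^p_\CH(S)\cong S/O^p_\CF(S)$ induced by the identity on $S$. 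Combined with the containment $O^p_\CH(S)\subseteq O^p_\CF(S)$ coming from $\CH\subseteq\CF$, this forces the desired equality.

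The principal obstacle I anticipate is the second step, the cohomological Tate theorem in the fusion-system setting. The Bockstein/five-lemma induction is classical in form, but its execution requires verifying that the transfer produced by the characteristic element of $\CH$ really splits the restriction in $\FF_p$-cohomology and is natural with respect to short exact sequences of coefficient modules. Once these compatibilities are confirmed, Steps~1 and~3 are essentially formal.
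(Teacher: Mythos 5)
Your easy chain $(3)\Rightarrow(2)\Rightarrow(1)$ via Corollary~\ref{cor:focalhyperfocal} is exactly the paper's argument, and your Step~1 (rephrasing $(1)$ as: $\res\colon H^1(\CF;\FF_p)\to H^1(\CH;\FF_p)$ is an isomorphism) is correct. The proof breaks at Step~2. First, mechanically: the Bockstein long exact sequences attached to $0\to\FF_p\to\ZZ/p^{n+1}\to\ZZ/p^n\to 0$ relate cohomology with \emph{different coefficients} in neighbouring degrees; a five-lemma induction can transport the degree-one isomorphism across coefficient changes (this is what Tate actually does: he obtains $H^1(-;\ZZ/p^n)$ isomorphisms for all $n$), but it cannot raise the cohomological degree --- to deduce surjectivity of $H^2(\CF;\FF_p)\to H^2(\CH;\FF_p)$ from these sequences you would already need degree-two surjectivity as an input. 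Second, and fatally, the statement you want in Step~2 is false in the required generality. An isomorphism $H^*(\CF;\FF_p)\cong H^*(\CH;\FF_p)$ in all degrees identifies the $\CF$-stable with the $\CH$-stable elements of $H^*(S;\FF_p)$ and, by Mislin's theorem in the realizable case, forces $\CH$ to control \emph{fusion} in $\CF$. But hypothesis $(1)$ is only control of transfer, which is strictly weaker: Yoshida's theorem in this very paper supplies a large stock of examples where $\CH=\N_\CF(S)$ satisfies $(1)$ (e.g.\ $S$ of exponent $p$ with $\CF$ simple, so both elementary focal subgroups equal $S$) while $\CH\subsetneq\CF$ with genuinely different fusion and hence different mod~$p$ cohomology rings. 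So no correct proof of $(1)\Rightarrow(3)$ can pass through an all-degrees cohomology isomorphism, and Step~3 collapses with Step~2. Note also that even the faithful, coefficient-changing version of Tate's induction only detects abelian quotients of $S$ through $H^1$, i.e.\ at best reaches the focal subgroup $A^p_\CF(S)$; condition $(3)$ concerns the hyperfocal subgroup, and the quotient $S/O^p_\CF(S)$ is not abelian in general, so some genuinely non-abelian input is unavoidable.

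For comparison, the paper deliberately avoids Tate's cohomological route and follows Gagola--Isaacs. The non-abelian input is packaged group-theoretically: one uses the transfer $\tau^\CF_\CH$ with values in $S/[S,\CH]$, the splitting $S/S'=[S,\CF]/S'\times T_\CF/S'$ of Proposition~\ref{abelpfactordecomp}, the compatibility of the transfers of $\CF$ and of its $p$-power-index subsystems $\CF_N$ (Theorem~\ref{RelationCharIdempPPower} and Corollary~\ref{cor_keyforTate}), and the downward induction of Proposition~\ref{T:induction}. Applying that proposition with $U=O^p_\CF(S)$ and $V=O^p_\CH(S)[O^p_\CF(S),\CH]$, hypothesis $(1)$ forces $U/V\le\Phi(S/V)$ and hence $U=V$; one then passes to $\ol S=S/O^p_\CH(S)$, where $\ol U=[\ol U,\ol S]$ forces $\ol U=1$ because $\ol S$ is a finite $p$-group. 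If you want to salvage a cohomological flavour, you would have to replace your Step~2 by this kind of argument; as written, the proposal does not prove $(1)\Rightarrow(3)$.
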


To show the implication $(\ref{intro_thm_Tate:elem}) \Longrightarrow (\ref{intro_thm_Tate:hyper})$, instead of Tate's original cohomological proof, we follow the strategy of Gagola and Isaacs in \cite{GagolaIssacs2008tate}, using transfer maps. As a corollary, we obtain a fusion system version of the $p$-nilpotency criterion suggested by Atiyah \cite{Tate1964nilpotent} and proved independently with alternative methods in \cite{CantareroSchererViruel}.
%The transfer in the setting of fusion systems was already defined and successfully used in \cite{{DGMPtransfer}} by three of the authors and Nadia %Mazza. To prove Tate's theorem we needed to develop here further properties of the transfer.

\begin{cor}\label{intro_cor_p-nilpotency} %\marginpar{Where is this proved?}
Let $\CF$ be a saturated fusion system on a finite $p$-group $S$. If the restriction map $H^1(\CF;\FF_p)\to H^1(S;\FF_p)$ is an isomorphism, then $\CF=\CF_S(S)$.
\end{cor}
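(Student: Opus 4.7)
The plan is to interpret the hypothesis at the level of focal subgroups, apply Theorem T with the trivial comparison subsystem $\CH=\CF_S(S)$, and then promote the resulting equality of hyperfocal subgroups to an identification of the entire fusion system via Alperin's fusion theorem.

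First I would identify $H^1(\CF;\FF_p)$ with $\Hom(S/E^p_\CF(S),\FF_p)$. Indeed, $H^1(S;\FF_p)=\Hom(S,\FF_p)=\Hom(S/\Phi(S),\FF_p)$ since $\FF_p$ is elementary abelian, and the stable-elements description of the cohomology of a fusion system says that a homomorphism $f\colon S\to\FF_p$ lies in $H^1(\CF;\FF_p)$ exactly when $f\circ\varphi=f|_P$ for every morphism $\varphi\colon P\to S$ of $\CF$. Equivalently, $f$ must vanish on the focal subgroup $A^p_\CF(S)=[S,\CF]$, and hence on $E^p_\CF(S)=\Phi(S)A^p_\CF(S)$. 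Since the restriction map $H^1(\CF;\FF_p)\to H^1(S;\FF_p)$ is induced by the surjection $S/\Phi(S)\twoheadrightarrow S/E^p_\CF(S)$, its being an isomorphism is equivalent to $E^p_\CF(S)=\Phi(S)$.

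For the trivial subsystem $\CH=\CF_S(S)$, the focal subgroup $A^p_\CH(S)=[S,S]$ is contained in $\Phi(S)$, so $E^p_\CH(S)=\Phi(S)$. Thus the hypothesis reads $E^p_\CF(S)=E^p_\CH(S)$, and the implication $(\ref{intro_thm_Tate:elem})\Rightarrow(\ref{intro_thm_Tate:hyper})$ of Theorem T then yields $O^p_\CF(S)=O^p_\CH(S)$. Because $\Aut_\CH(P)=\Aut_S(P)$ is a $p$-group for every $P\leq S$, each $O^p(\Aut_\CH(P))$ is trivial, so $O^p_\CH(S)=1$.

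The remaining and main step is to deduce $\CF=\CF_S(S)$ from $O^p_\CF(S)=1$. Since $O^p_\CF(S)$ is generated by the subgroups $[P,O^p(\Aut_\CF(P))]\leq S$, its vanishing forces $[P,O^p(\Aut_\CF(P))]=1$ for every $P\leq S$; as $O^p(\Aut_\CF(P))$ sits faithfully in $\Aut(P)$ yet acts trivially on $P$, it must itself be trivial, so each $\Aut_\CF(P)$ is a $p$-group. For $P$ fully $\CF$-normalized, saturation makes $\Aut_S(P)$ a Sylow $p$-subgroup of $\Aut_\CF(P)$, whence $\Aut_\CF(P)=\Aut_S(P)$. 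Alperin's fusion theorem for saturated fusion systems then expresses every morphism of $\CF$ as a composition of restrictions of such automorphisms, giving $\CF\subseteq\CF_S(S)$; the reverse containment is automatic, and the corollary follows.
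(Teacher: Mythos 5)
Your proposal is correct and follows essentially the same route as the paper: translate the cohomological hypothesis into the equality $E^p_\CF(S)=\Phi(S)=E^p_{\CF_S(S)}(S)$, apply Theorem T with $\CH=\CF_S(S)$ to get $O^p_\CF(S)=1$, and conclude $\CF=\CF_S(S)$. The only difference is that you spell out the final step (triviality of the hyperfocal subgroup forces each $\Aut_\CF(P)$ to be a $p$-group, hence $\CF=\CF_S(S)$ by saturation and Alperin's fusion theorem), which the paper leaves as a one-line assertion.
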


%In the theory of finite groups, transfer plays a key role in finding proper nontrivial normal subgroups. It is a weaker form of fusion: that is, if a %subgroup controls fusion, then it controls transfer. 
By analogy with the group case, if any of the equivalent statements in Tate's Theorem above hold, we say that $\CH$ \textit{controls transfer} in $\CF$. With this definition, the natural translation of Yoshida's theorem to fusion systems is, thus, given by the following theorem. 

\begin{thmY}\label{intro_thm_Yoshida}
Let $\CF$ be a fusion system on a finite $p$-group $S$ and let $\CH =  
N_\CF(S)$.  If $\CH$ does not control transfer in $\CF$, then $C_p \wr C_p$ is a
homomorphic image of $S$.
\end{thmY}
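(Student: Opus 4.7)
The plan is to translate the control-of-transfer condition via Theorem T into a statement about characters of $S$, and then to adapt Yoshida's classical double-coset argument, with the characteristic biset $\omega_\CF$ playing the role of the ambient group.

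By Theorem T, the hypothesis that $\CH = N_\CF(S)$ does not control transfer in $\CF$ is equivalent to $E^p_\CF(S) \ne E^p_\CH(S)$, which by $\FF_p$-duality gives a strict containment $H^1(\CF;\FF_p) \subsetneq H^1(\CH;\FF_p)$ inside $H^1(S;\FF_p) = \Hom(S,\FF_p)$. Since the morphisms of $N_\CF(S)$ are exactly the restrictions of elements of $\Aut_\CF(S)$, the $\CH$-stable characters are precisely the $\Aut_\CF(S)$-invariant ones. The hypothesis thus yields a character $\chi \colon S \to \FF_p$ that is $\Aut_\CF(S)$-invariant but not $\CF$-stable, so some $\CF$-morphism $\varphi \colon P \to S$ satisfies $\chi\circ\varphi \ne \chi|_P$. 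Setting $K = \ker(\chi)$ gives an $\Aut_\CF(S)$-invariant subgroup of index $p$ in $S$, together with a $\CF$-morphism $\varphi$ that sends some element of $P\cap K$ outside $K$.

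To convert this cohomological obstruction into a quantitative one, I would use the characteristic biset $\omega_\CF$, whose induced transfer on $H^*(S;\FF_p)$ has image $H^*(\CF;\FF_p)$, and likewise $\omega_\CH$. Decomposing $\omega_\CF - \omega_\CH$ as a $\ZZ_{(p)}$-combination of transitive bisets $[S\times_{(\psi,Q)} S]$ with $\psi \in \Hom_\CF(Q,S)$, the induced action on $\chi$ is an explicit sum of classical transfers $\tr_Q^S(\chi\circ\psi)$, and by the previous paragraph this sum is nonzero on some element of $S/\Phi(S)$.

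Following Yoshida, the final step extracts from the nonvanishing a surjection $S\twoheadrightarrow C_p\wr C_p$. Counting modulo $p$ the $\Aut_\CF(S)$-translates of the contributing morphism forces the existence of $p$ pairwise distinct $S$-conjugates $Q_0,\ldots,Q_{p-1}$ of some $Q\le S$, together with a cyclic permutation among them induced by $\psi$; these assemble, via the $\Aut_\CF(S)$-action on $S/[K,K]\Phi(K)$, to a projection of $S$ onto $(\ZZ/p)^p \rtimes \ZZ/p = C_p\wr C_p$. The main obstacle is this combinatorial step: in the group case Yoshida invokes Sylow's theorem inside $G$ to count double cosets $SgS$, whereas here the element $g$ is replaced by an abstract $\CF$-morphism whose $\Aut_\CF(S)$-orbit must be analyzed using saturation and the extension axiom. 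Matching the decomposition of $\omega_\CF - \omega_\CH$ to Yoshida's double-coset formula and verifying that the resulting cyclic action genuinely assembles into a wreath-product quotient of $S$, rather than of a proper subquotient, is where I expect the real work to lie.
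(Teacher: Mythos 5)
Your setup is on the right track: replacing Yoshida's double-coset decomposition of $G$ by the decomposition of a characteristic biset $\Omega=\sum_i[P_i,\varphi_i]$ is exactly what the paper does, and the observation that the summands coming from $\CH=N_\CF(S)$ contribute a unit multiple of $x$ modulo $p$ (because $\epsilon(\Omega)\not\equiv 0\pmod p$ forces the number of summands with $P_i=S$ to be prime to $p$) is the correct substitute for the Sylow/double-coset counting you were worried about. However, the step you defer --- ``verifying that the resulting cyclic action genuinely assembles into a wreath-product quotient'' --- is not a technicality to be matched later; it is the entire theorem, and the sketch you give for it does not work. Producing $p$ distinct $S$-conjugates $Q_0,\dots,Q_{p-1}$ permuted cyclically would at best exhibit a wreath-product-like configuration of \emph{subgroups} of $S$; it yields no surjection $S\twoheadrightarrow C_p\wr C_p$, and nothing in the stability of $\omega_\CF$ forces such a family of conjugates to exist in the first place.

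The paper's mechanism is different and is what you are missing. From the failure of control one picks a maximal subgroup $Y<S$ containing $\Img(\tau^\CF_\CH)$ and an element $x\in S-Y$ of \emph{minimal order}, and isolates a single summand with $P<S$ proper and $\tr^S_P(\pi\circ\varphi)(x)\notin Y/[S,\CH]$. One then chooses a maximal subgroup $A$ of $S$ containing $P$, uses the minimality of the order of $x$ (via transitivity of transfer through $A$) to show $x\in A$, and computes for $z\in S-A$ that $\tr^S_P(\pi\circ\varphi)(x)=\tr^A_P(\pi\circ\varphi)\bigl(\prod_{i=0}^{p-1}z^{-i}xz^i\bigr)$. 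If this product lay in $\Phi(A)$ the value would land in $\Phi(Q/[S,\CH])\leq Y/[S,\CH]$, a contradiction; hence $\prod_{i=0}^{p-1}z^{-i}xz^i\notin\Phi(A)$, and Glauberman's detection criterion (Lemma~\ref{T:wreath}, i.e.\ Lemma 6.4 of \cite{Glauberman1977factor}) applied to $R=S/\Phi(A)$ and $E=A/\Phi(A)$ produces the quotient $C_p\wr C_p$. Your proposal contains no analogue of the minimal-order choice of $x$, of the passage through the maximal subgroup $A$, or of Glauberman's lemma, and without some such device the nonvanishing of a transfer cannot be converted into a quotient of $S$. (A minor further point: you invoke Theorem~T to pass to $E^p$ and $H^1(-;\FF_p)$, but the paper simply takes $[S,\CF]\neq[S,\CH]$ as the definition of failure of control, which is all that is needed and avoids any appearance of circularity, since Theorem~T is proved later.)
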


\textbf{Organization of the paper:} In Section \ref{S:biset}, we recall the notion of double Burnside rings and characteristic elements in order to define the transfer later in the same section. In Section \ref{S:Yoshida} we prove Yoshida's theorem for fusion systems. In section \ref{S:OpF}, we prove new properties of the the $p$-power index transfer that are needed in section \ref{S:Tate} to prove Tate's theorem for fusion systems and Corollary \ref{intro_cor_p-nilpotency}. In the appendix, we recall the definitions of \normal subsystems and quotient systems, and prove some of their properties in the $p$-power index case.

\textbf{Acknowledgments:} The authors are grateful to the Department of Mathematical Sciences at the University of Copenhagen for funding and hosting a workshop on fusion systems where the foundation for this work was established.  Antonio D\'iaz and Radu Stancu visited Mathematisches Forschungsinstitut Oberwolfach twice while Sejong Park was an Oberwolfach Leibniz Fellow, and worked on this project.  We thank the institute and the staff for their hospitality and excellent working environment.  We also sincerely thank K\'ari Ragnarsson for supplying us with a proof of a key property of the transfer in Theorem~\ref{RelationCharIdempPPower}.

\section{Characteristic elements and transfer for fusion systems}  
\label{S:biset}

\subsection{Double Burnside ring}
We begin this section with a brief review of the ($p$-localized) double Burnside ring of a finite group, following closely the treatment in \cite{Ragnarsson2006spectra}.  For finite groups $G$ and $H$, a {\em $(G,H)$-biset} is a finite set with commuting right $G$-action and left $H$-action.  The {\em Burnside module} $A(G,H)$ of $G$ and $H$ is the Grothendieck group of the monoid of isomorphism classes of $(G,H)$-bisets with free left $H$-action, under disjoint union. For finite groups $G$, $H$ and $K$ there is a bilinear map
\[
A(K,H)\times A(G,K)\to A(G,H)
\]
given by
\[
(\Omega,\Lambda)\mapsto \Omega\circ \Lambda:=\Omega\times_K \Lambda.
\]

As an abelian group, $A(G,H)$ is free with one generator for each isomorphism class of transitive $(G,H)$-bisets with free left $H$-action. These generators are represented by   bisets of the form $H \times_{(K,\psi)} G$, where $K\leq G$, $\psi \in \Hom(K,H)$ and
$$
H \times_{(K,\psi)} G = (H\times G)/\sim \text{, where $(x,uy)\sim(x\psi(u),y)$ for $x\in H$, $y\in G$, $u\in K$.}
$$
We use the notation $[K,\psi]^H_G$ to denote the generator corresponding to $H \times_{(K,\psi)} G$, and we write $[K,\psi]$ if $G$ and $H$ are clear from the context.  In case $G=H$, $A(G,G)$ becomes a ring, called the {\em double Burnside ring} of the group $G$. We will also consider its $p$-localization
$$
A(G,G)_{(p)}:=A(G,G)\otimes_{\ZZ}\ZZ_{(p)}.
$$
Note that $A(G,G)$ is a subring of $A(G,G)_{(p)}$.

For any $\ZZ G$-module $A$ there is a linear map 
$$
H^*(-;A)\colon A(G,G)\to \End(H^*(G;A))
$$ 
that takes the generator $[K,\psi]$ to 
$$
\tr^{G}_{K}\circ \psi^* : H^*(G;A)\to H^*(G;A),
$$ 
where $\tr^{G}_{K}:H^*(K;A)\rightarrow H^*(G;A)$ is the usual transfer map and $\psi^*:H^*(G;A)\rightarrow H^*(K;A)$ is restriction via $\psi$. It turns out that $H^*(-;A)$ is a ring homomorphism: for $\Omega, \Lambda \in A(G,G)$ we have
$$
H^*(\Omega\circ \Lambda;A)=H^*(\Omega;A)\circ H^*(\Lambda;A).
$$
If $A$ is a $\ZZ_{(p)}G$-module, the ring homomorphism
$$
H^*(-;A)\colon A(G,G)_{(p)}\to \End(H^*(G;A)).
$$
is defined analogously.

Now, let $\CF$ be a saturated fusion system over a finite $p$-group $S$. It is a remarkable result in the theory of fusion systems that there exist certain elements in $A(S,S)_{(p)}$, called \emph{characteristic elements}, that reflect all the properties of $\CF$ (see \cite{BLO2003theory} and \cite{Ragnarsson2006spectra}).  We discuss them below, and they are at the core of our definition of transfer for fusion systems.

We denote by $A_\CF(S,S)$ and $A_\CF(S,S)_{(p)}$ the subrings of $A(S,S)$ and $A(S,S)_{(p)}$, respectively, generated by $[P,\varphi]^S_S$ with $\varphi \in \Hom_\CF(P,S)$.  Let $\Omega\in A(S,S)_{(p)}$.  We say that {\em $\Omega$ is right $\CF$-stable} if for $P\leq S$ and every morphism $\varphi\in \Hom_\CF(P,S)$ the following equality holds in $A(P,S)_{(p)}$
$$
\Omega\circ [P,\varphi]^S_P=\Omega\circ [P,\incl]^S_P,
$$	
where $\incl:P\hookrightarrow S$ is the inclusion map. Left $\CF$-stability is defined analogously using the following equality in $A(S,P)_{(p)}$
$$
[\varphi(P),\varphi^{-1}]^P_S\circ \Omega=[P,\id]^P_S\circ \Omega,
$$	
where $\id:P\to P$ is the identity map. There is a unique linear extension $\epsilon$ to $A(S,S)_{(p)}$ of the map sending every generator $[P,\varphi]^S_S$ to its number of right $S$-orbits:
$$
\epsilon([P,\varphi]^S_S)=|S|/|P|.
$$
It is easy to see that, in fact, $\epsilon:A(S,S)_{(p)}\to \ZZ_{(p)}$ is a ring homomorphism and that it restricts to $\epsilon:A(S,S)\to \ZZ$.

\begin{dfn}\label{def_charbiset}
Let $\CF$ be a saturated fusion system over a finite $p$-group $S$. An element $\Omega\in A(S,S)_{(p)}$ is a \emph{characteristic element} for $\CF$ if  it satisfies the following properties:
\begin{enumerate}
\renewcommand{\theenumi}{\alph{enumi}}
\item $\Omega\in A_\CF(S,S)_{(p)};$\label{components}
\item $\Omega$ is right $\CF$-stable and left $\CF$-stable;
\label{invariant}
\item $\epsilon(\Omega) \not\equiv 0 \pmod {p\ZZ_{(p)}}$.  \label{coprime}
\end{enumerate} 
\end{dfn}

These three properties were first formulated by Linckelmann and Webb. In \cite[5.5]{BLO2003theory} Broto, Levi and Oliver proved that for any saturated fusion system $\CF$ there exists such a characteristic element $\Omega$, while in \cite{RagnarssonStancu2009idempotents}, Ragnarsson and Stancu prove that the existence of a characteristic element for a fusion system guarantees saturation. Furthermore, the element $\Omega$ constructed in \cite{BLO2003theory} is contained in $A_\CF(S,S)$ and has nonnegative coefficients; that is, it is an isomorphism class of an actual $(S,S)$-biset. We call such a characteristic element a \emph{characteristic biset} for $\CF$; more generally, if negative integral coefficients are allowed, we call it a \emph{virtual characteristic biset}. If $\CF$ is the fusion system induced by a finite group $G$ on its Sylow $p$-subgroup $S$ (i.e., $\CF=\CF_S(G)$) then $G$, viewed as an $(S,S)$-biset in the obvious way, is a characteristic biset for $\CF_S(G)
 $. See Example \ref{T:group transfer} for more details.

Characteristic elements of a given saturated fusion system $\CF$ are not unique.  Indeed, one can simply multiply a given characteristic element by a $p'$-number to get a new one.  But there is one special characteristic element introduced by Ragnarsson, which plays a key role in the theory.

%Note that given a characteristic element $\Omega$ and a positive integer $e$, one can obtain a characterstic element $\Omega'$ with $\epsilon(\Omega') \equiv 1 \mod p^e\Z_{(p)}$ simply by taking a disjoint union of a suitable $p'$-number of copies of $\Omega$.

%Note that, setting $I_0 =\{ i \in I \mid P_i = S \}$, we have
%\[
%       \epsilon(\Omega)=|\Omega| / |S| = \sum_{i \in I} |S:P_i| \equiv |I_0| \mod p.
%\]
%Thus \eqref{coprime} is equivalent to 
%\begin{enumerate}
%\renewcommand{\theenumi}{\alph{enumi}$'$}
%\setcounter{enumi}{2}
%\item $|I_0|$ is not divisible by $p$. \label{coprime'}
%\end{enumerate}
%Indeed, by the construction of the $(S,S)$-biset $\Omega$ in  \cite[5.4, 5.5]{BLO2003theory}, one can take $\Omega$ so that $\{ \varphi_i \mid i \in %I_0 \}$ is a set of coset representatives of $\Inn(S)$ in $\Aut_\CF(S)$.  Then \eqref{coprime'} follows from the Sylow axiom of fusion systems.

\begin{dfn}\label{def_charidem}
Let $\CF$ be a saturated fusion system over the $p$-group $S$. A \emph{characteristic idempotent} for $\CF$ is a characteristic element for $\CF$ that is an idempotent in the ring $A(S,S)_{(p)}$.
\end{dfn}

Note that the idempotent condition implies that %\marginpar{This paragraph needs work. For instance, the first sentence is isolated, M is not used in line 5, we should probably explain what $\pcomp{\ZZ}$ is and we should use either $\omega$ or $\omega_\CF$, not both.} 
$\epsilon(\omega)=1$.  In \cite{Ragnarsson2006spectra}, Ragnarsson shows that there exists a unique characteristic idempotent $\omega_\CF$ for every saturated fusion system $\CF$. We briefly recall here Ragnarsson's construction of $\omega_\CF$ (see (\cite[4.9, 5.8]{Ragnarsson2006spectra})) as it will be needed later. Given any virtual characteristic biset $\Omega\in A_\CF(S,S)$ for $\CF$, there is a large enough integer $M$ such that $\Omega^M$ is an idempotent modulo $p$. Then the sequence $\Omega^M$, $\Omega^{Mp}$, $\Omega^{Mp^2}$, $\ldots$ converges in the $p$-adic topology to an idempotent in $\pcomp{A(S,S)}:=A(S,S)\otimes_{\ZZ}\pcomp{\ZZ}$, where $\pcomp{\ZZ}$ are the $p$-adic integers. By uniqueness this idempotent has to be the characteristic idempotent $\omega_\CF$, and it turns out that $\omega_\CF$ actually lives in $A(S,S)_{(p)}$.

\subsection{Transfer}
We devote the rest of the section to defining the transfer map for fusion systems using characteristic elements and to proving some basic properties.  In particular,  we will show that the definition is essentially unique in spite of the choice of characteristic elements.  

Fix a saturated fusion system $\CF$ on a finite $p$-group $S$.  Let $A$ be a $\ZZ_{(p)}S$-module and consider a characteristic element $\Omega\in A_\CF(S,S)_{(p)}$ for $\CF$ expressed as
$$
\Omega=\sum  c_{[P,\varphi]} [P,\varphi],
$$
where the sum runs over the generators $[P,\varphi]$ of $A(S,S)$ and $c_{[P,\varphi]}\in \ZZ_{(p)}$. The endomorphism $H^*(\Omega;A)$ of $H^*(S;A)$ can be explicitly described as 
\begin{equation}\label{transfer}
H^*(\Omega;A)=\sum c_{[P,\varphi]}\cdot (\tr^{S}_{P}\circ \varphi^*).
\end{equation}

The following example highlights the feature of finite groups that $H^*(\Omega;A)$ is modeling.

\begin{exm} \label{T:group transfer}
Let $G$ be a finite group with Sylow $p$-subgroup $S$ and let $\CF =  
\CF_S(G)$.  The biset $\Omega = G$, where the $(S,S)$-biset structure is given by left and right multiplication in the group $G$, is a characteristic biset for $\CF$. An easy calculation shows that
\[
        \Omega \cong \coprod_{g \in [S\backslash G/S]} S \times_{(S \cap  
{}^{g}S,c_{g^{-1}})}S,
\]
and hence we get
\[
        H^*(\Omega;A) = \sum_{g \in [S\backslash G/S]} \tr^{S}_{S \cap {}^{g}S} \circ c^*_{g^{-1}}.
\]
But this is just the Mackey decomposition formula for the double cosets $SgS$ in $G$. Therefore,
\[
        H^*(\Omega;A) = \res^G_S \circ \tr^{G}_{S}
\]
where $\res^G_S:H^*(G;A)\rightarrow H^*(S;A)$ is restriction via the inclusion $S\hookrightarrow G$.
%This suggests that in general the set $\{ \varphi_i \mid i \in I \}$  
%of morphisms in $\CF$ appearing in an non-virtual characteristic $(S,S)$-biset 
%$$
%\Omega=\coprod_{i \in I} S \times_{(P_i,\varphi_i)} S,
%$$
%e.g., the one built by Broto-Levi-Oliver, plays  the role of ``a set of $S$-$S$-double cosets representatives of the  
%fusion system $\CF$'', and the formula \eqref{transfer} can be viewed as a ``Mackey decomposition formula over $S$-$S$-double cosets in $\CF $'' for the map $H^*(\Omega;A)$.
\end{exm}

%Assume $A$ is an abelian $p$-group of exponent $p^e$ and $\Omega$ satisfies \eqref{components}, \eqref{invariant} and
%\begin{enumerate}
%\renewcommand{\theenumi}{\alph{enumi}'}
%\setcounter{enumi}{2}
%\item $|\Omega| / |S|\equiv 1 \mod e$.  \label{coprime''}
%\end{enumerate}

%Then the argument in \cite[Proposition 5.5]{BLO2003theory} shows that $H^*(\Omega;A)$ is an idempotent in $\End(H^*(S;A))$ and that the image of $H^*(\Omega;A)$ is exactly 
%$$
%\Img H^*(\Omega;A)=H^*(\CF;A):=\{z\in H^*(S;A) | \varphi^*(z)=r^S_P(z) \text{ for all $\varphi\in \Hom_\CF(P,S)$}\}.
%$$

Assume that $\Omega$ is a characteristic element for $\CF$ and $A$ is an abelian $p$-group with trivial $S$-action. The argument in \cite[Proposition 5.5]{BLO2003theory} shows that $H^*(\Omega;A)$ is an idempotent in $\End(H^*(S;A))$ up to multiplication by the $p'$-number $\epsilon(\Omega)$ and that the image of $H^*(\Omega;A)$ is exactly 
$$
H^*(\CF;A):=\{z\in H^*(S;A) \mid \varphi^*(z)=\res^S_P(z) \text{ for all } \varphi\in \Hom_\CF(P,S)\}.
$$

Hence, given characteristic elements $\Omega$ and $\Lambda$ for $\CF$, $H^*(\Omega;A)$ and $H^*(\Lambda;A)$ are projections (up to a $p'$-factor) in $\End(H^*(S;A))$ that have the same image. The following corollary shows that, indeed, they only differ by a $p'$-factor.  %we prove below that $H^*(\Omega;A)=H^*(\Lambda;A)$.

%\begin{lem}\label{lemma_abcequal}
%Let $\CF$ be a saturated fusion system on the $p$-group $S$ and let $A$ be a abelian group of exponent $e$. If $\Omega$, $\Lambda \in A(S,S)$ satisfy \eqref{components}, \eqref{invariant} and \eqref{coprime''} then $H^*(\Omega;A)=H^*(\Lambda;A)$.
%\end{lem}
%\begin{proof}
%Let $\Omega$ and $\Lambda$ be as in the statement. According to \cite[Proposition 4.9]{Ragnarsson2006spectra} there is a large enough positive integer $k$ such that $\Lambda^k-\Omega^k=e\Upsilon$ for some $\Upsilon\in A(S,S)$. As $H^*(\cdot;A)$ is a ring homomorphism we have $H^*(\Lambda^k-\Omega^k;A)=H^*(\Lambda;A)^k-H^*(\Omega;A)^k$. Because both $H^*(\Lambda;A)$ and $H^*(\Omega;A)$ are idempotents we have $H^*(\Lambda;A)^k=H^*(\Lambda;A)$ and $H^*(\Omega;A)^k=H^*(\Omega;A)$. On the other hand, $e$ is the exponent of $A$ and therefore $H^*(e\Upsilon;A)=eH^*(\Upsilon;A)=0$.
%\end{proof}

%It is clear that if $\Omega\in A(S,S)$ satisfies  \eqref{components}, \eqref{invariant} and \eqref{coprime} then a disjoint union of an appropiate $p'$-number of copies of $\Omega$ satisfies \eqref{components}, \eqref{invariant} and \eqref{coprime''}. Hence we have:

\begin{cor}\label{corollary_[omega]uptoqnumber}
Let $\CF$ be a saturated fusion system on a finite $p$-group $S$ and let $A$ be an abelian $p$-group with trivial $S$-action. If $\Omega$ and $\Lambda$ are characteristics elements for $\CF$ then there is a $p'$-number $r$ such that $H^*(\Omega;A)=r\cdot H^*(\Lambda;A)$.
\end{cor}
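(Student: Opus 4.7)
The plan is to reduce everything to the characteristic idempotent $\omega_\CF$. Specifically, for every characteristic element $\Omega$ of $\CF$ I will prove the single identity
\[
H^*(\Omega;A) \;=\; \epsilon(\Omega)\cdot H^*(\omega_\CF;A).
\]
Once this is established, applying it to both $\Omega$ and $\Lambda$ yields $H^*(\Omega;A) = r\cdot H^*(\Lambda;A)$ with $r = \epsilon(\Omega)/\epsilon(\Lambda) \in \ZZ_{(p)}^\times$, which is a $p'$-number since both $\epsilon(\Omega)$ and $\epsilon(\Lambda)$ are units in $\ZZ_{(p)}$ by axiom (c) of Definition~\ref{def_charbiset}.

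The first ingredient is the algebraic identity $H^*(\Omega;A)^n = \epsilon(\Omega)^{n-1}\cdot H^*(\Omega;A)$ for every $n \geq 1$. This is immediate from the two facts recalled from \cite[Prop.~5.5]{BLO2003theory}: the image of $H^*(\Omega;A)$ is $H^*(\CF;A)$, and on a stable class $z \in H^*(\CF;A)$ we have $\varphi^*(z) = \res^S_P(z)$, so by \eqref{transfer} together with $\tr^S_P\circ\res^S_P = [S:P]\cdot\id$ one computes $H^*(\Omega;A)(z) = \bigl(\sum c_{[P,\varphi]}[S:P]\bigr)\cdot z = \epsilon(\Omega)\cdot z$. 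Iterating the statement that $H^*(\Omega;A)$ is multiplication by $\epsilon(\Omega)$ on its own image produces the displayed power identity.

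Next I invoke Ragnarsson's construction of $\omega_\CF$ as the $p$-adic limit $\omega_\CF = \lim_k \Omega^{Mp^k}$ for $M$ sufficiently large. Since $S$ is a finite $p$-group and $A$ is a finite abelian $p$-group, $|S|\exp(A)$ annihilates $H^*(S;A)$; hence $\End(H^*(S;A))$ is $p$-adically complete, and the ring homomorphism $H^*(-;A)$ extends continuously to $\pcomp{A(S,S)_{(p)}}$ and commutes with $p$-adic limits. Combining this with the identity from the previous step,
\[
H^*(\omega_\CF;A) \;=\; \lim_k H^*(\Omega;A)^{Mp^k} \;=\; \Bigl(\lim_k \epsilon(\Omega)^{Mp^k - 1}\Bigr)\cdot H^*(\Omega;A).
\]
Since $\epsilon\colon A(S,S)_{(p)} \to \ZZ_{(p)}$ is a continuous ring homomorphism and $\epsilon(\omega_\CF) = 1$, we have $\lim_k \epsilon(\Omega)^{Mp^k} = 1$ in $\ZZ_p$, whence $\lim_k \epsilon(\Omega)^{Mp^k-1} = \epsilon(\Omega)^{-1}$. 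Therefore $H^*(\omega_\CF;A) = \epsilon(\Omega)^{-1}\cdot H^*(\Omega;A)$, which is exactly the target identity.

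The one point needing a little care is the well-definedness of a $\ZZ_p$-scalar action on $\End(H^*(S;A))$: this is handled by the bounded exponent $|S|\exp(A)$, which forces the $\ZZ_p$-action to factor through $\ZZ/p^N$ for any $N$ with $p^N \geq |S|\exp(A)$. Hence $\epsilon(\Omega)^{-1} \in \ZZ_{(p)}^\times$ reduces there to a genuine $p'$-integer, and the resulting $r = \epsilon(\Omega)/\epsilon(\Lambda)$ is a $p'$-number in the classical sense. I expect this $p$-adic continuity argument to be the main technical obstacle; the rest is a direct unwinding of the stable-elements description of $H^*(\CF;A)$.
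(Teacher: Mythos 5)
Your proof is correct. The paper's own argument also hinges on Ragnarsson's observation that high powers of a characteristic element converge $p$-adically to $\omega_\CF$, but it exploits this at a finite stage rather than in the limit: after clearing denominators it chooses a single exponent $k$ with $\Lambda^k-\Omega^k=p^e\Upsilon$ in $A(S,S)$ (where $p^e$ is the exponent of $A$), applies the ring homomorphism $H^*(-;A)$, which kills $p^e\Upsilon$, and then uses the near-idempotency relations $H^*(\Lambda;A)^k=q_1\cdot H^*(\Lambda;A)$ and $H^*(\Omega;A)^k=q_2\cdot H^*(\Omega;A)$ to conclude $q_1\cdot H^*(\Lambda;A)=q_2\cdot H^*(\Omega;A)$. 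That version never evaluates $H^*(-;A)$ on $\omega_\CF$ and needs no continuity discussion. Your route proves the sharper and more useful statement $H^*(\Omega;A)=\epsilon(\Omega)\cdot H^*(\omega_\CF;A)$, which identifies the scalar explicitly as $r=\epsilon(\Omega)/\epsilon(\Lambda)$ (the paper only produces $r=q_1/q_2$ implicitly); the price is the $p$-adic continuity argument, which you handle correctly since $\End(H^*(S;A))$ is annihilated by $p^N$ once $p^N\geq |S|\exp(A)$, so $H^*(-;A)$ factors through $A(S,S)_{(p)}/p^N$ and convergent sequences become eventually constant. One small point to tidy: the limit construction, as quoted in the paper, starts from a virtual characteristic biset in the integral ring $A_\CF(S,S)$, whereas your $\Omega$ lives in $A(S,S)_{(p)}$; you should first multiply by a $p'$-integer clearing denominators (exactly as the paper does in its opening line), which rescales $H^*(\Omega;A)$ and $\epsilon(\Omega)$ by the same $p'$-factor and hence leaves your target identity unchanged.
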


\begin{proof}
After multiplying by suitable $p'$-numbers, we may assume that $\Omega$ and $\Lambda$ lie in $A(S,S)$.  Let $p^e$ be the exponent of $A$. As remarked after Definition~\ref{def_charidem}, there is a large enough positive integer $k$ such that $\Lambda^k-\Omega^k=p^e\Upsilon$ for some $\Upsilon\in A(S,S)$.  Because both $H^*(\Lambda;A)$ and $H^*(\Omega;A)$ are idempotents up to a $p'$-factor, we get $H^*(\Lambda;A)^k=q_1\cdot H^*(\Lambda;A)$ and $H^*(\Omega;A)^k=q_2\cdot H^*(\Omega;A)$, where $q_1$ and $q_2$ are $p'$-numbers. On the other hand, $p^e$ is the exponent of $A$ and therefore $H^*(p^e\Upsilon;A)=p^eH^*(\Upsilon;A)=0$. As $H^*(-;A)$ is a ring homomorphism we finally obtain
$$
0=H^*(\Lambda^k-\Omega^k;A)=H^*(\Lambda;A)^k-H^*(\Omega;A)^k=q_1\cdot H^*(\Lambda;A)-q_2\cdot H^*(\Omega;A).
$$
\end{proof}

%As mentioned above, the characteristic idempotent $\omega\in\pcomp{A(S,S)}$ is the limit of a subsequence of the sequence $\Omega$, $\Omega^2$, $\Omega^3$, $\ldots$, where $\Omega\in A(S,S)$ is any (virtual) characteristic biset. Similar arguments to those of the previous lemma and corollary shows that 

%\begin{lem}\label{lemma_abcequal_ci}
%Let $\CF$ be a saturated fusion system on the $p$-group $S$ and let $A$ be a $\plocz$-module of exponent $e$, e.g., a finite abelian $p$-group. If $\Omega$ is a characteristic biset for $\CF$ and $\omega$ is a characteristic idempotent for $\CF$ then there is a $p'$-number $q$ such that $H^*(\Omega;A)=q\cdot H^*(\omega;A)$.
%\end{lem}

We are now ready to define the transfer map. Working in degree $1$, we identify $H^1(S;A)=\Hom(S,A)$ and note that $H^1(\CF;A)=\Hom(S/[S,\CF],A)$.

\begin{dfn}\label{definition_transfer}
Let $\CF$ be a saturated fusion system on a finite $p$-group $S$ and let $\CH$ be a saturated fusion subsystem of $\CF$ on $S$. Set $A = S /[S,\CH]$ and consider the canonical projection $\pi \colon S \to S/[S,\CH]$. Given a characteristic element $\Omega$ for $\CF$, the \emph{transfer map from $\CH$ to $\CF$ with respect to $\Omega$} is 
\[
        \tau^{\CF}_{\CH,\Omega} = H^1(\Omega;A)(\pi) \colon S \to S/[S,\CH].
\]
\end{dfn}

When $\CH$ is the trivial fusion system $\CF_S(S)$ on $S$ then $[S,\CH]=[S,S]=S'$, the derived subgroup of $S$. In this case we write $\tau^{\CF}_{S,\Omega}$ instead of $\tau^{\CF}_{\CH,\Omega}$ and we call it the \emph{transfer map from $S$ to $\CF$ (with respect to $\Omega$)}. The transfer $\tau^{\CF}_{S,\Omega}$ was successfully used in \cite{DGMPtransfer} by three of the authors and Nadia Mazza to study control of transfer and weak closure in fusion systems.  In the next lemma, we show that if $\Sigma$ is another characteristic element for $\CF$ then $\tau_{\CH,\Omega}^\CF$ and $\tau_{\CH,\Sigma}^\CF$ only differ by the multiplication by a $p'$-number.

 \begin{lem}\label{lemma_tdependence}
Let $\CF$ be a saturated fusion system on a finite $p$-group $S$ and let $\CH$ be a saturated fusion subsystem of $\CF$ on $S$. Let $\Sigma$ and $\Omega$ be characteristic elements for $\CF$.
\begin{enumerate}
\item $\tau^{\CF}_{\CH,\Sigma}=r\cdot \tau^{\CF}_{\CH,\Omega}$ for some $p'$-number $r$. \label{lemma_dependence1}
\item $\Img(\tau^{\CF}_{\CH,\Sigma})=\Img(\tau^{\CF}_{\CH,\Omega})$.\label{lemma_dependence2}
\item $\Ker(\tau^{\CF}_{\CH,\Sigma})=\Ker(\tau^{\CF}_{\CH,\Omega})=[S,\CF]$.\label{lemma_dependence3} In particular, $\tau^{\CF}_{\CH,\Omega}$ can be viewed as a map from $S/[S,\CH]$ to itself.
\item $\tau^{\CF}_{\CH,\Omega}\circ \tau^{\CF}_{\CH,\Omega}=\epsilon(\Omega)\cdot \tau^{\CF}_{\CH,\Omega}$.   \label{lemma_almost_idempotent}
\end{enumerate}
\end{lem}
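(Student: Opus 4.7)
The unifying idea is that any two characteristic elements induce cohomological operations on $H^*(S;A)$ that agree up to a $p'$-scalar, and that the characteristic idempotent $\omega_\CF$ serves as a canonical reference. Throughout, set $A=S/[S,\CH]$, a finite abelian $p$-group with trivial $S$-action.

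For \emph{parts (1) and (2)}, I apply Corollary~\ref{corollary_[omega]uptoqnumber} to this $A$, obtaining a $p'$-number $r$ with $H^1(\Sigma;A)=r\cdot H^1(\Omega;A)$. Evaluating at $\pi\in H^1(S;A)$ gives $\tau^{\CF}_{\CH,\Sigma}=r\cdot\tau^{\CF}_{\CH,\Omega}$. Since $A$ is a $p$-group, multiplication by $r$ is an automorphism of $A$, so the kernels and images of the two transfer maps coincide.

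\emph{Part (3)} is the main obstacle. By (1) it suffices to treat $\Omega=\omega_\CF$. The discussion preceding Corollary~\ref{corollary_[omega]uptoqnumber} shows that $H^1(\omega_\CF;A)$ is an idempotent with image $H^1(\CF;A)=\Hom(S/[S,\CF],A)$, so $\tau^{\CF}_{\CH,\omega_\CF}$ factors through $S/[S,\CF]$ and $[S,\CF]\subseteq\Ker\tau^{\CF}_{\CH,\omega_\CF}$. The reverse inclusion is the key point: post-compose with the canonical quotient $q\colon S/[S,\CH]\to S/[S,\CF]$, and write $\pi_\CF=q\circ\pi\colon S\to S/[S,\CF]$ for the full projection. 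Viewing $q$ as a (trivial-action) coefficient change, naturality of transfer and restriction in the coefficient module gives that $q_*$ commutes with $H^1(\omega_\CF;-)$, whence
\[
q\circ\tau^{\CF}_{\CH,\omega_\CF}=H^1(\omega_\CF;S/[S,\CF])(q\circ\pi)=H^1(\omega_\CF;S/[S,\CF])(\pi_\CF)=\pi_\CF,
\]
the last equality holding because $\pi_\CF$ is already $\CF$-stable (by definition of $[S,\CF]$) and the idempotent $H^1(\omega_\CF;-)$ fixes $\CF$-stable elements. Therefore $\Ker\tau^{\CF}_{\CH,\omega_\CF}\subseteq\Ker\pi_\CF=[S,\CF]$, giving equality.

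For \emph{part (4)}, the argument of \cite[Proposition~5.5]{BLO2003theory} recalled earlier gives $H^1(\Omega;A)^2=\epsilon(\Omega)\cdot H^1(\Omega;A)$. By (3), $\tau=\tau^{\CF}_{\CH,\Omega}$ kills $[S,\CH]$ and descends to an endomorphism $\bar\tau\colon A\to A$ with $\tau=\bar\tau\circ\pi$. Since $A$ has trivial $S$-action, $\bar\tau$ is automatically $S$-equivariant; naturality of transfer and restriction in coefficients then shows that post-composition by $\bar\tau$ commutes with $H^1(\Omega;A)$ on $\Hom(S,A)$. Combining these,
\[
\tau\circ\tau=\bar\tau\circ\tau=H^1(\Omega;A)(\bar\tau\circ\pi)=H^1(\Omega;A)(\tau)=H^1(\Omega;A)^2(\pi)=\epsilon(\Omega)\cdot\tau.
\]
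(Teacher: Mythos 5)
Your argument is correct, and for the substantive part --- the identification of the kernel in (3) --- it takes a genuinely different route from the paper. The paper proves $\Ker(\tau^{\CF}_{\CH,\Omega})\subseteq[S,\CF]$ by choosing $\Omega$ to be an actual characteristic biset and running the classical transfer-evaluation computation: writing $\tau^{\CF}_{\CH}(x)$ as a sum over coset representatives, grouping them into $\la x\ra$-orbits, and deducing the congruence $\tau^{\CF}_{\CH}(x)\equiv\epsilon(\Omega)\cdot\pi(x)$ modulo $\pi([S,\CF])$, from which the kernel statement follows since $\epsilon(\Omega)$ is a $p'$-number. You instead reduce to the characteristic idempotent $\omega_\CF$ via (1), post-compose with the quotient $q\colon S/[S,\CH]\to S/[S,\CF]$, and use naturality of $H^1(\Omega;-)$ in the coefficient module together with the fact that the idempotent $H^1(\omega_\CF;S/[S,\CF])$ fixes the $\CF$-stable class $\pi_\CF$; this yields $q\circ\tau^{\CF}_{\CH,\omega_\CF}=\pi_\CF$ and hence the kernel bound. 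Both are valid; your version is shorter and more conceptual but leans on Ragnarsson's idempotent and on the precise identification of $\Img H^*(\omega_\CF;A)$ with $H^*(\CF;A)$, whereas the paper's explicit computation is elementary, works directly with any characteristic biset, and produces the congruence formula in the style of the focal subgroup theorem that is echoed in the Yoshida-theorem computations. Parts (1), (2) and (4) follow the paper's lines, with your naturality argument for (4) usefully filling in a step the paper leaves implicit (note only that $\tau$ kills $[S,\CH]$ already because $\tau$ is $\CF$-stable, so there is no circular reliance on the hard half of (3)).
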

\begin{proof}
Statement \eqref{lemma_dependence1} follows immediately from Corollary \ref{corollary_[omega]uptoqnumber} and the definition of the transfer while (\ref{lemma_almost_idempotent}) reflects the fact that $H^*(\Omega;A)$ is an idempotent up to multiplication by the $p'$-number $\epsilon(\Omega)$. From \eqref{lemma_dependence1} we obtain \eqref{lemma_dependence2} and the first equality in \eqref{lemma_dependence3}. To simplify notation, in the rest of the proof we write $\tau^{\CF}_{\CH}$ instead of $\tau^{\CF}_{\CH,\Omega}$. Note that $[S,\CF]$ is contained in the kernel of $\tau^{\CF}_{\CH}$ because $\tau^{\CF}_{\CH}\in H^1(\CF;S /[S,\CH])$. To prove that $\Ker(\tau^{\CF}_{\CH})$ is not larger than $[S,\CF]$ we take $\Omega$ to be a characteristic biset for $\CF$; $\Omega$ then has the form $\Omega =  \coprod_{i \in I} S \times_{(P_i,\varphi_i)} S$ and
\begin{equation}
       \tau^{\CF}_{\CH} = \sum_{i \in I} \tr^{S}_{P_i} (\pi \circ \varphi_i). \label{cotransfer}
\end{equation}
For $x\in S$ we have
\begin{align*}
        \tau^{\CF}_{\CH}(x)
        &= \sum_{i \in I} \tr^{S}_{P_i}(\pi \circ \varphi_i)(x) \\
        &= \sum_{i \in I} \sum_{\tr\in [S/P_i]} (\pi \circ \varphi_i)((x\cdot t)^{-1}xt),
\end{align*}
where $[S/P_i]$ denotes a set of representatives of the left cosets of $P_i$ in $S$, and for $t\in [S/P_i]$, $x\cdot t$ is the unique element in $[S/P_i]$ such that $(x\cdot t)P_i=xtP_i$. Considering a set $W$ of $\langle x \rangle$-orbit representatives of $[S/P_i]$, we obtain

\begin{align*}
        \tau^{\CF}_{\CH}(x)
        &= \sum_{i \in I} \sum_{w\in W} \pi(\varphi_i(w^{-1}x^{r(w)}w)),
\end{align*}
where $r(w)$ denotes the length of the $\langle x \rangle$-orbit of $[S/P_i]$ containing $w \in W$.
As $\varphi_i(w^{-1}x^{r(w)}w)[S,\CF]=w^{-1}x^{r(w)}w[S,\CF]$, we find that
\begin{align*}
        \tau^{\CF}_{\CH}(x)+\pi([S,\CF])
        &= \pi\left(\sum_{i \in I} \sum_{w\in W} x^{r(w)}\right)+\pi([S,\CF])\\
 				&= \pi\left(\sum_{i \in I} x^{|S:P_i|}\right)+\pi([S,\CF])\\
				&= \pi\left(x^{|\Omega|/|S|}\right)+\pi([S,\CF])\\
				&= {|\Omega|/|S|}\cdot \pi(x)+\pi([S,\CF]).
\end{align*}				
If $x\in\Ker(\tau^{\CF}_{\CH})$, then $\tau^{\CF}_{\CH}(x)=0$ in $S/[S,\CH]$ and ${|\Omega|/|S|}\cdot \pi(x)\in \pi([S,\CF])$. Since $|\Omega|/|S|$ is a $p'$-number, %\marginpar{We need to introduce this fact somewhere} 
also $\pi(x)\in \pi([S,\CF])$. As $[S,\CH]\leq [S,\CF]$, we conclude that $x \in [S,\CF]$.
\end{proof}

Throughout the paper, in general, we will use the notation $\tau^{\CF}_{\CH}$ for the transfer map from $\CF$ to $\CH$ without specifying the characteristic elements.  By Lemma \ref{lemma_tdependence}, changing the characteristic element amounts to multiplying the transfer map by some $p'$-number, and does not change its kernel and image.

\begin{prp}\label{abelpfactordecomp}
Let $\CF$ be a saturated fusion system on a finite $p$-group $S$. If $\CH$ is a saturated fusion subsystem of $\CF$ on $S$, then
\[
S/[S, \CH] = [S, \CF]/[S, \CH] \times T_{\CF}[S,\CH]/[S,\CH],
\]
where $T_{\CF}$ denotes the subgroup of $S$ containing $S'$ and such that $T_{\CF}/S' = \Img(\tau_S^{\CF})$. In particular, $S/[S,\CF]$ is a direct factor of $S/[S,\CH]$.
\end{prp}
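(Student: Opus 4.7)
The plan is to realize the transfer as a (scalar multiple of an) idempotent endomorphism of the abelian $p$-group $S/[S,\CH]$ and to invoke the standard kernel–image splitting for idempotents on abelian groups.

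First I would observe that $S/[S,\CH]$ is actually abelian: since $\CH$ is saturated on $S$, $\Inn(S)\leq \Aut_\CH(S)$, so $[S,\CH]\supseteq[S,\Aut_\CH(S)]\supseteq[S,\Inn(S)]=S'$. In particular $[S,\CH]\leq[S,\CF]$ as well. Fix a characteristic element $\Omega$ for $\CF$ and write $\tau=\tau^\CF_{\CH,\Omega}\colon S\to S/[S,\CH]$. By Lemma~\ref{lemma_tdependence}(\ref{lemma_dependence3}), $[S,\CF]\supseteq[S,\CH]$ lies in $\Ker(\tau)$, so $\tau$ factors through an endomorphism $\bar\tau\colon S/[S,\CH]\to S/[S,\CH]$, and $\Ker(\bar\tau)=[S,\CF]/[S,\CH]$.

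Next I would upgrade $\bar\tau$ to a genuine idempotent. By Lemma~\ref{lemma_tdependence}(\ref{lemma_almost_idempotent}), $\bar\tau\circ\bar\tau=r\bar\tau$ where $r=\epsilon(\Omega)$ is a $p'$-number. Because $S/[S,\CH]$ is a finite abelian $p$-group, there is an integer $s$ with $sr\equiv 1$ modulo the exponent of $S/[S,\CH]$. Then $e:=s\bar\tau$ is a group homomorphism (abelianness is used here) satisfying $e\circ e=s^2 r\bar\tau=s\bar\tau=e$; moreover multiplication by $s$ is an automorphism of $S/[S,\CH]$, so $\Ker(e)=\Ker(\bar\tau)=[S,\CF]/[S,\CH]$ and $\Img(e)=\Img(\bar\tau)=\Img(\tau)$. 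The standard decomposition for an idempotent endomorphism of an abelian group then yields
\[
S/[S,\CH] \;=\; \Ker(e)\times\Img(e) \;=\; [S,\CF]/[S,\CH]\;\times\;\Img(\tau).
\]

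Finally, I would identify $\Img(\tau)$ with $T_\CF[S,\CH]/[S,\CH]$. Let $q\colon S/S'\twoheadrightarrow S/[S,\CH]$ be the canonical projection. By Definition~\ref{definition_transfer}, $\tau^\CF_{S,\Omega}=H^1(\Omega;S/S')(\pi')$ and $\tau^\CF_{\CH,\Omega}=H^1(\Omega;S/[S,\CH])(\pi)$, where $\pi'$ and $\pi=q\circ\pi'$ are the two canonical projections from $S$. Since $H^1(\Omega;-)$ is natural in the coefficient module, $q_*$ intertwines the two operators, so $\tau^\CF_{\CH}=q\circ\tau^\CF_S$ as homomorphisms $S\to S/[S,\CH]$. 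Hence $\Img(\tau^\CF_\CH)=q(\Img(\tau^\CF_S))=q(T_\CF/S')=T_\CF[S,\CH]/[S,\CH]$, giving the asserted decomposition. The final clause follows because the decomposition identifies $T_\CF[S,\CH]/[S,\CH]$ with $(S/[S,\CH])/([S,\CF]/[S,\CH])\cong S/[S,\CF]$.

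The main obstacle is really only bookkeeping: passing from the “idempotent up to a $p'$-scalar” identity of Lemma~\ref{lemma_tdependence}(\ref{lemma_almost_idempotent}) to a true idempotent (which is painless once one notes that $S/[S,\CH]$ is an abelian $p$-group), and verifying by naturality in the coefficients that $\tau^\CF_\CH=q\circ\tau^\CF_S$ so that the image of the transfer is exactly the prescribed second factor.
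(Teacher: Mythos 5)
Your proof is correct. It rests on the same mechanism as the paper's --- the transfer is an idempotent up to a $p'$-scalar, so it splits the abelian $p$-group it acts on into kernel and image --- but you organize it differently. The paper applies this splitting only in the base case $\CH=\CF_S(S)$, obtaining $S/S'=[S,\CF]/S'\times T_\CF/S'$ from $\tau^\CF_S$, and then simply passes to the quotient by $[S,\CH]/S'$, which sits inside the first factor, so the product decomposition descends for free. You instead split $S/[S,\CH]$ directly using $\tau^\CF_\CH$, which forces you to add one ingredient the paper does not need: the naturality of $H^1(\Omega;-)$ in the (trivial) coefficient module, giving $\tau^\CF_\CH=q\circ\tau^\CF_S$ and hence the identification $\Img(\tau^\CF_\CH)=T_\CF[S,\CH]/[S,\CH]$. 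That naturality claim is true and routine (transfer and restriction commute with coefficient homomorphisms), so there is no gap. Your version has the merit of spelling out the step the paper leaves implicit --- converting ``idempotent up to the $p'$-unit $\epsilon(\Omega)$'' into a genuine idempotent by inverting $\epsilon(\Omega)$ modulo the exponent --- while the paper's route is shorter because quotienting a direct product by a subgroup of one factor manifestly preserves the decomposition.
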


\begin{proof}
Applying part (3) of Lemma \ref{lemma_tdependence} to $\tau^{\CF}_{S}$ gives the equality
\[
S/S' = [S,\CF]/S' \times T_{\CF}/S'
\]
%from which we get the following diagram:
%\[
%\xymatrix@! @=-1pc{
%& S \ar@{-}[dddl] \ar@{-}[dr] & & \\
%& & T_{\CF}[S,\CH] \ar@{-}[dr] \ar@{-}[dddl] & \\
%& & & T_{\CF} \ar@{-}[dddl] \\
%[S,\CF] \ar@{-}[dr] & & & \\
%& [S, \CH] \ar@{-}[dr] & & \\
%&  & S' &
%}
%\]
%$for any saturated subsystem $\CH$ on $S$.
Factoring this equality by $[S,\CH]/S'$ gets us the result in the proposition.
\end{proof}

As a cyclic $p$-group has no proper nontrivial direct factors, the previous proposition immediately gives the following corollary.

\begin{cor}\label{cyclicpfac}
Let $\CF$ be a saturated fusion system on a finite $p$-group $S$ such that $[S, \CF] < S$ and let $\CH$ be a saturated fusion subsystem of $\CF$ on $S$. If $S/[S, \CH]$ is cyclic, then $\CH$ controls transfer in $\CF$, i.e., $[S, \CH] = [S, \CF]$.
\end{cor}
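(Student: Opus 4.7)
The plan is to read off the statement directly from Proposition~\ref{abelpfactordecomp} by invoking the structural rigidity of cyclic $p$-groups under direct product decompositions. The proposition gives
\[
S/[S,\CH] = [S,\CF]/[S,\CH] \times T_{\CF}[S,\CH]/[S,\CH],
\]
so I have expressed the cyclic $p$-group $S/[S,\CH]$ as a direct product of two $p$-subgroups. The decisive fact is that a cyclic $p$-group $C$ admits no nontrivial direct product decomposition: if $C \cong A \times B$, then $A = 1$ or $B = 1$. Hence one of the two factors above must vanish.

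To finish, I would identify which factor is trivial. First I would note that $S/[S,\CH]$ itself is nontrivial: if it were trivial, then $[S,\CH] = S$, and combined with $[S,\CH] \subseteq [S,\CF] \subseteq S$ this would force $[S,\CF] = S$, contradicting the hypothesis $[S,\CF] < S$. Second, the factor $[S,\CF]/[S,\CH]$ is a \emph{proper} subgroup of $S/[S,\CH]$: since $[S,\CH] \subseteq [S,\CF] < S$, the index $[S : [S,\CF]]$ is at least $p$. So $[S,\CF]/[S,\CH]$ is the proper factor in the cyclic $p$-group $S/[S,\CH]$, and must therefore be trivial. This yields $[S,\CF] = [S,\CH]$, which is exactly the claim that $\CH$ controls transfer in $\CF$.

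There is no real obstacle to overcome here; the entire content of the corollary is already encoded in Proposition~\ref{abelpfactordecomp}, and the only additional input is the elementary observation about indecomposability of cyclic $p$-groups as direct products, combined with bookkeeping that rules out the wrong factor being trivial using the hypothesis $[S,\CF] < S$.
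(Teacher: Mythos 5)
Your proposal is correct and is exactly the paper's argument: the paper derives the corollary in one line from Proposition~\ref{abelpfactordecomp} together with the fact that a cyclic $p$-group has no proper nontrivial direct factors, and your additional bookkeeping (using $[S,\CF]<S$ to see that $[S,\CF]/[S,\CH]$ is the proper, hence trivial, factor) just makes explicit what the paper leaves implicit.
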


%*****************************************************************************************************************************
%*****************************************************************************************************************************
%*****************************************************************************************************************************

\section{Yoshida's theorem}\label{S:Yoshida}
%In the theory of finite groups, transfer plays a key role in finding proper nontrivial normal subgroups. It is a weaker form of fusion: that is, if a subgroup controls fusion, then it controls transfer. An interesting result on control of transfer appeared in \cite[Theorem 4.2]{Yoshida1978} where the author proves that if $G$ is a finite group with Sylow $p$-subgroup $S$, then $\N_G(S)$ controls transfer unless $C_p \wr C_p$ is a quotient of $S$. We will generalize this result  to fusion systems following closely the proof appearing in \cite[\S I.6]{Glauberman1977factor}.

In this section, we prove that for a saturated fusion system $\CF$ on a finite $p$-group $S$, if $C_p \wr C_p$ is not a homomorphic image of $S$, then the focal subgroups of $\CF$ and $\N_\CF(S)$ coincide. First, we recall a useful lemma that helps detect a homomorphic image isomorphic to $C_p \wr C_p$. This appears as Lemma 6.4 in \cite{Glauberman1977factor}.

\begin{lem} \label{T:wreath}
Let $R$ be a finite $p$-group having an elementary abelian subgroup $E 
$ of index $p$.  Suppose that there are $x \in E$ and $z \in R - E$  
such that
\[
        \prod_{i=0}^{p-1} z^{-i}xz^i \neq 1.
\]
Then $R$ has $C_p \wr C_p$ as a homomorphic image.
\end{lem}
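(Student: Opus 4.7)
The plan is to exhibit an explicit normal subgroup $K \triangleleft R$ with $R/K \cong C_p \wr C_p$. Write $\sigma \in \Aut(E)$ for conjugation by $z$; since $z^p \in E$ and $E$ is abelian, $\sigma^p = \id_E$, so $E$ becomes an $\FF_p\la z\ra$-module. Set $x_i = z^{-i}xz^i$ for $0 \le i < p$ and let $A = \la x_0, \dots, x_{p-1}\ra = \FF_p\la z\ra \cdot x \le E$, a $z$-invariant elementary abelian subgroup.

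The first and most substantial step is to show $A \cong \FF_p[C_p]$ as an $\FF_p\la z\ra$-module. Written additively, the hypothesis is $(1 + \sigma + \cdots + \sigma^{p-1})x \neq 0$, and the identity
\[
1 + \sigma + \cdots + \sigma^{p-1} = (\sigma - 1)^{p-1} \quad\text{in } \FF_p[\sigma]
\]
(obtained from $\sigma^p - 1 = (\sigma-1)^p$ in characteristic $p$ by cancelling $\sigma-1$) converts the hypothesis into $(\sigma - 1)^{p-1}x \neq 0$. Hence the $\FF_p\la z\ra$-annihilator of $x$ is exactly $((\sigma - 1)^p) = (\sigma^p - 1)$, which gives $A \cong \FF_p\la z\ra/(\sigma^p - 1) \cong \FF_p[C_p]$; in particular $\dm_{\FF_p} A = p$ and $z$ permutes the basis $x_0, \dots, x_{p-1}$ cyclically.

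Since $\FF_p[C_p]$ is self-injective, $A$ is a direct summand of $E$: $E = A \oplus C$ for some $z$-stable subgroup $C \le E$. Then $C$ is centralised by $E$ and normalised by $z$, so $C \triangleleft R$. The quotient $\ol R := R/C$ has order $p^{p+1}$ and sits in an extension
\[
1 \longrightarrow A \longrightarrow \ol R \longrightarrow C_p \longrightarrow 1,
\]
in which the top $C_p$ acts on $A \cong \FF_p[C_p]$ by cyclic shift of the $x_i$.

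To finish, I would split this extension, which yields $\ol R \cong A \rtimes C_p \cong C_p \wr C_p$, so that $C_p \wr C_p$ is a homomorphic image of $R$. Splitting is immediate from Shapiro's lemma since $H^2(C_p;\FF_p[C_p]) \cong H^2(\{1\};\FF_p) = 0$; alternatively, an explicit order-$p$ lift of the top generator can be produced by replacing $z$ with $x^{-\lambda}z$, where $\lambda \in \FF_p$ is chosen so that the $A$-component of $\ol z^p \in A^{\la\ol z\ra}$ is cancelled by $\mathrm{Norm}_{\la z\ra}(x^{-\lambda})$ via the standard formula $(ez)^p = \mathrm{Norm}_{\la z\ra}(e)\cdot z^p$ for $e \in E$. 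The main obstacle is Step 1, where the hypothesis must be converted into the structural statement that $A$ is the regular $C_p$-module through the characteristic-$p$ identity above; once that is in hand, the remainder is standard module theory together with a vanishing cohomology group.
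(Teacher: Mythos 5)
Your proof is correct, and it is worth noting that the paper itself offers no argument for this lemma at all: it is simply quoted as Lemma 6.4 of Glauberman's \emph{Factorizations in local subgroups of finite groups}, so you have supplied a self-contained proof where the authors only cite. The key reduction is exactly right: writing $\sigma$ for conjugation by $z$, the hypothesis is that the norm element $1+\sigma+\cdots+\sigma^{p-1}=(\sigma-1)^{p-1}$ does not kill $x$, which forces $\mathrm{Ann}_{\FF_p[\sigma]}(x)=(\,(\sigma-1)^p\,)=0$ and hence $A=\la x_0,\dots,x_{p-1}\ra$ to be the regular module $\FF_p[C_p]$. Self-injectivity of $\FF_p[C_p]$ then gives the $\sigma$-stable complement $C$ with $E=A\oplus C$, and $C\trianglelefteq R$ since $E$ is abelian; the resulting extension $1\to A\to R/C\to C_p\to 1$ splits either by Shapiro's lemma ($H^2(C_p;\FF_p[C_p])=0$) or by your explicit adjustment $z\mapsto x^{-\lambda}z$ using $(ez)^p=\mathrm{Norm}(e)z^p$ and the fact that $A^{\sigma}$ is spanned by the norm of $x$. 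All of these steps check out, so $R/C\cong \FF_p[C_p]\rtimes C_p\cong C_p\wr C_p$ as claimed.
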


We can now prove Theorem Y, using part (2) in Theorem T as a definition for the control of transfer.
 
\begin{thmY}
Let $\CF$ be a saturated fusion system on a finite $p$-group $S$ and let $\CH =  
N_\CF(S)$.  If $[S,\CF] \neq [S,\CH]$, then $S$ has $C_p \wr C_p$ as a  
homomorphic image.
\end{thmY}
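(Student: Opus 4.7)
The plan is to prove the contrapositive: assume $S$ has no $C_p \wr C_p$ homomorphic image; then show $[S,\CF] = [S,\CH]$ for $\CH = N_\CF(S)$. The inclusion $[S,\CH] \subseteq [S,\CF]$ is automatic from the definitions, so only the reverse inclusion needs proof.

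The first step is to apply Alperin's fusion theorem for saturated fusion systems, which yields the decomposition
\[
[S,\CF] = [S,\Aut_\CF(S)] \cdot \bigl\langle [E,\psi] : E \lneq S \text{ essential},\ \psi \in \Aut_\CF(E) \bigr\rangle.
\]
Since every $\CF$-automorphism of $S$ belongs to $\Aut_\CH(S)$, the first factor already sits in $[S,\CH]$. Hence it suffices to show $[E,\psi] \subseteq [S,\CH]$ for every essential $E \lneq S$ and every $\psi \in \Aut_\CF(E)$. Assume for contradiction that such $E$ and $\psi$ exist with $[E,\psi] \not\subseteq [S,\CH]$, and pick $x \in E$ with $y := x^{-1}\psi(x) \notin [S,\CH]$.

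Since $E \lneq S$ and $N_S(E)/E$ is a non-trivial $p$-group, there exists $z \in N_S(E)$ whose image in $\Aut_S(E)$ has order exactly $p$. Let $\hat E$ be a maximal subgroup of $S$ containing $E$, so $\hat E \triangleleft S$ has index $p$ and $c_z$ is a $p$-element of $\Aut(\hat E)$. The goal is to construct a normal subgroup $M \triangleleft S$ contained in $\hat E$ with $\hat E/M$ elementary abelian, and with the $\langle c_z\rangle$-orbit of $\bar y = yM$ spanning a regular $\FF_p\langle c_z\rangle$-summand inside $\hat E/M$. Then the norm $\prod_{i=0}^{p-1} \bar z^{-i}\bar y\bar z^i$ will be non-trivial in $S/M$, and Lemma~\ref{T:wreath} applied to $S/M$ will produce $C_p \wr C_p$ as a homomorphic image of $S$, contradicting the hypothesis.

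The main obstacle is the explicit construction of $M$. The technical difficulty is that $[S,\CH]$ is only strongly $\CH$-closed (not $\CF$-closed in general), so arbitrary $\CF$-morphisms can move elements in and out of $[S,\CH]$, making it delicate to keep $\bar y$ non-trivial in the quotient while also forcing $\hat E/M$ to be elementary abelian of the right rank. One must exploit the essentiality of $E$: the strongly $p$-embedded subgroup in $\Out_\CF(E)$ provides genuinely non-inner $p'$-elements in $\Aut_\CF(E)$, and by adjusting $\psi$ within its $\Aut_\CF(E)$-coset (using coprime action on $E/\Phi(E)$) one arranges the class of $y$ to sit in a regular $\FF_p\langle c_z\rangle$-submodule of $E/(\Phi(E)\cdot[S,\CH])$ on which the norm is non-zero. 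This module-theoretic step, combining the $p'$-action from $\psi$ with the order-$p$ action of $c_z$, is where the heart of the argument lies.
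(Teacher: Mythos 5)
There is a genuine gap, and you have in fact flagged it yourself: the construction of the normal subgroup $M$ and the verification that the norm $\prod_{i=0}^{p-1}\bar z^{-i}\bar y\bar z^{i}$ is nontrivial is not a ``technical obstacle'' to be filled in later --- it is the entire content of the theorem, and your outline gives no mechanism that could force it. In the chosen quotient $\hat E/M$ there is no reason the $\langle c_z\rangle$-orbit of $\bar y$ spans a regular $\FF_p\langle c_z\rangle$-summand; for instance $c_z$ may act trivially on whatever elementary abelian quotient you produce, in which case the norm is $\bar y^{\,p}=1$ and Lemma~\ref{T:wreath} gives nothing. The suggestion that one can ``adjust $\psi$ within its coset using coprime action'' to place $\bar y$ in a regular summand is unsupported: $y=x^{-1}\psi(x)$ is a single element of $E$, the relevant quotient is built from $[S,\CH]$ (which, as you note, is not $\CF$-closed), and nothing in the hypotheses ties the $c_z$-module structure of that quotient to the failure of $y$ to lie in $[S,\CH]$.

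Compare this with how the paper gets the norm off the ground. The argument there is not a reduction to essential subgroups via Alperin's theorem but a transfer computation with a characteristic biset $\Omega=\sum_i[P_i,\varphi_i]$: the congruence $\epsilon(\Omega)\equiv|I_0|\not\equiv 0\pmod p$ isolates a \emph{proper} subgroup $P<S$ and $\varphi\in\Hom_\CF(P,S)$ with $\tr^S_P(\pi\circ\varphi)(x)\notin Y/[S,\CH]$ for a maximal subgroup $Y$ containing $\Img(\tau^\CF_\CH)$ and an $x\in S-Y$ of \emph{minimal order}. The minimal-order choice forces $x$ to lie in every maximal subgroup $A\supseteq P$, which converts the outer transfer $\tr^S_A$ into the norm map $u\mapsto\prod_i z^{-i}uz^i$; and then the norm cannot lie in $\Phi(A)$, because if it did the transfer value would land in $\Phi(Q/[S,\CH])\leq M/[S,\CH]\leq Y/[S,\CH]$, contradicting the displayed non-containment. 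In other words, the nonvanishing of the norm is \emph{deduced} from the transfer inequality; it is not arranged by a clever choice of quotient. Your proposal discards the transfer entirely and therefore has no substitute for this step. Unless you can supply an independent argument producing the regular $\FF_p\langle c_z\rangle$-summand (and I see no route to one from essentiality of $E$ alone), the proof does not go through; I would recommend reinstating the transfer map $\tau^\CF_\CH$, whose kernel $[S,\CF]$ and non-surjectivity under the hypothesis $[S,\CF]\neq[S,\CH]$ are exactly what Lemma~\ref{lemma_tdependence} provides.
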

\begin{proof}
Fix a characteristic biset $\Omega$ for $\CF$ and write
\[
	\Omega = \sum_{i \in I} [P_i,\varphi_i].
\]
Let $I_0 = \{ i\in I \mid P_i = S \}$.  Then 
$$ 
	\epsilon(\Omega) = \sum_{i\in I} |S:P_i| = |I_0| + \sum_{i\in I - I_0} |S:P_i| \equiv |I_0| \pmod p.
$$
By part \eqref{coprime} of Definition~\ref{def_charbiset}, it follows that $|I_0| \not\equiv 0 \pmod p$.

Suppose that $[S,\CF] \neq [S,\CH]$.  By Lemma~\ref{lemma_tdependence}, the transfer map $\tau_{\CH}^{\CF}$ has kernel $[S,\CF]$ and hence induces a nonsurjective map
\[
        \overline{\tau}^{\CF}_{\CH} \colon S/[S,\CF] \to S/[S,\CH].
\]
Let $\Img (\tau^{\CF}_{\CH}) = X/[S,\CH]$ where $ 
[S,\CH] \leq X < S$.  Take a maximal subgroup $Y$ of $S$ containing $X 
$, and take an element $x \in S - Y$ of minimal order.  We have
\begin{align*}
        \tau^{\CF}_{\CH}(x)
        &= \sum_{i \in I_0} \tr^{S}_{S}(\pi \circ \varphi_i)(x) + \sum_{j \in I  
- I_0} \tr^{S}_{P_j}(\pi \circ \varphi_j)(x) \\
        &= \sum_{i \in I_0} \varphi_i(x)[S,\CH] + \sum_{j \in I - I_0}  
\tr^{S}_{P_j}(\pi \circ \varphi_j)(x) \in Y/[S,\CH].
\end{align*}
Also, since $\varphi_i \in \Aut_{\CH}(S)$ whenever $i \in I_0$, 
\begin{align*}
        \sum_{i \in I_0} \varphi_i(x)[S,\CH] & = \sum_{i \in I_0}  
xx^{-1}\varphi_i(x)[S,\CH] \\
        & = x|I_0|[S,\CH] \notin Y/[S,\CH],
\end{align*}
because $x \notin Y$ and $|I_0|$ is not divisible by $p$.  
Thus, there is a proper subgroup $P < S$ and $ 
\varphi \in \Hom_\CF(P,S)$ such  
that
\begin{equation} \label{notcontained}
        \tr^{S}_{P}(\pi\circ\varphi)(x) \notin Y/[S,\CH].
\end{equation}
Note that for every $u \in S$,
\[
        \tr^{S}_{P}(\pi\circ\varphi)(u) = \sum_{t \in [S/P]} (\pi\circ\varphi) 
((u \cdot t)^{-1}ut) \in \varphi(P)[S,\CH]/[S,\CH].\]
Therefore, we can view $\tr^{S}_{P}(\pi\circ\varphi)$ as a map from $S$ to $Q/ 
[S,\CH]        $ where $Q=\varphi(P)[S,\CH]$.  By \eqref{notcontained}, we  
have $Q \nleq Y$ and hence $M := Y \cap Q < Q$.  Since $|S:Y|=p$, it  
follows that
\begin{equation} \label{index p}
        |Q : M|=p.
\end{equation}

\[
\xymatrix@!{
          &                                      & S
\ar@{-}[dl]\ar@{-}[dr]^{p} \\
          & Q \ar@{-}[dl]\ar@{-}[dr]^{p}         &                       
       & Y \ar@{-}[dl]\ar@{-}[dr]\\
\varphi(P) &                                      & M \ar@{-}[dr]         
       &                            & X \ar@{-}[dl]\\
          &            &            & [S,\CH]\\
}
\]

Let $A$ be a maximal subgroup of $S$ containing $P$. We show that $x\in A$. 
Suppose $x \notin A$.  Then we can take $[S/A] = \{ x^i \mid 0 \leq i \leq p-1 \}$ and
\[
        x \cdot x^i =
\begin{cases}
        x^{i+1}        &\text{if $i < p-1$,}\\
        1                &\text{if $i = p-1$.}        
\end{cases}
\]
Using the transitivity of the transfer maps we get
\begin{align*}
        \tr^{S}_{P}(\pi\circ\varphi)(x)
        &= \tr^{S}_{A}(t^{A}_{P}(\pi\circ\varphi))(x)\\
        &= \sum_{i=0}^{p-1} \tr^{A}_{P}(\pi\circ\varphi) ((x \cdot  
x^i)^{-1}xx^i)\\
        &= \tr^{A}_{P}(\pi\circ\varphi)(x^p)\\
        &= \sum_{v \in [A/P]} (\pi\circ\varphi)((x^p \cdot        v)^{-1}x^pv)\\
        &= \sum_{w \in W} (\pi\circ\varphi)(w^{-1}x^{p\cdot r(w)}w)\\
        &\notin Y/[S,\CH]
\end{align*}
where $W$ denotes a set of $\langle x^p \rangle$-orbit representatives  
of $[A/P]$ and $r(w)$ denotes the length of the $\langle x^p \rangle$- 
orbit containing $w \in W$.  So there is a $w \in W$ such that $ 
\varphi(w^{-1}x^{p\cdot r(w)}w) \notin Y$. But by the minimality of the  
order $o(x)$ of $x$, we get
\[
        o(x) \leq o(\varphi(w^{-1}x^{p\cdot r(w)}w)) = o(w^{-1}x^{p\cdot r(w)}w) =  
o(x^{p\cdot r(w)}) < o(x),
\]
a contradiction.  Thus $x \in A$.

If $z \in S - A$, then $[S/A] = \{ z^i \mid 0 \leq i \leq p-1 \}$  
and $x \cdot z^i = z^i$ for all $i$ because $x \in A$ and $A \lhd S$.  
Therefore,
\begin{align*}
        \tr^{S}_{P}(\pi\circ\varphi)(x)
        &= \tr^{S}_{A}(\tr^{A}_{P}(\pi\circ\varphi))(x)\\
        &= \sum_{i=0}^{p-1} \tr^{A}_{P}(\pi\circ\varphi) ((x \cdot  
z^i)^{-1}xz^i)\\
        &= \sum_{i=0}^{p-1} t^{A}_{P}(\pi\circ\varphi) (z^{-i}xz^i)\\
        &= \tr^{A}_{P}(\pi\circ\varphi)(\prod_{i=0}^{p-1} z^{-i}xz^i).
\end{align*}
Suppose $\prod_{i=0}^{p-1} z^{-i}xz^i \in \Phi(A)$.  Since $\Phi(A) =  
A^p[A,A]$, we have $\tr^{S}_{P}(\pi\circ\varphi)(x) \in \Phi(Q/[S,\CH]) 
$.  But by \eqref{index p}, we have $\Phi(Q/[S,\CH]) \leq M/[S,\CH]$.  
Thus $\tr^{S}_{P}(\pi\circ\varphi)(x) \in Y/[S,\CH]$, contradicting  
\eqref{notcontained}.  Hence
\[
        \prod_{i=0}^{p-1} z^{-i}xz^i \notin \Phi(A).
\]
Now, by Lemma~\ref{T:wreath} applied to $R = S/\Phi(A)$ and $E=A/\Phi(A) 
$, the wreath product $C_p \wr C_p$ is a homomorphic image of $S/\Phi(A)$ and hence of $S$.
\end{proof}

Recall that if $\CF$ is a fusion system on a finite $p$-group $S$ and $\CH$ is a subsystem of $\CF$, then we say that $\CH$ \textit{controls transfer} in $\CF$ if $[S, \CF] = [S, \CH]$. 

\begin{cor}\label{T:yoshidacor1}
Let $\CF$ be a saturated fusion system on a finite $p$-group $S$. If any of the following conditions hold, then $\N_\CF(S)$ controls transfer in $\CF$. 
\begin{enumerate}
\item $S$ has nilpotence class less than $p$;\label{Tp:yoshidacor1a}
\item The exponent of $S$ is less than or equal to $p$;\label{Tp:yoshidacor1b}
\item $S$ is a regular $p$-group; \label{Tp:yoshidacor1c}
\item $p$ is odd and $S$ is metacyclic.\label{Tp:yoshidacor1d} 
\end{enumerate}
\end{cor}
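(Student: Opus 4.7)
The plan is to deduce all four cases uniformly from Theorem Y by contrapositive: if $\N_\CF(S)$ does not control transfer in $\CF$, then $C_p\wr C_p$ is a homomorphic image of $S$. So it suffices to verify, for each item, that the structural property in question (nilpotence class $<p$, exponent $\leq p$, regularity, or metacyclicity for $p$ odd) is inherited by quotients of $p$-groups, and that $C_p\wr C_p$ itself fails the property. The inheritance claims are standard in each case: class and exponent are non-increasing under quotients, regularity is defined by a universally quantified identity and so passes to quotients, and a metacyclic factorization visibly descends.

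For (\ref{Tp:yoshidacor1a}) and (\ref{Tp:yoshidacor1b}), the relevant facts about $C_p\wr C_p$ are elementary. Its nilpotence class is exactly $p$: the lower central series steps through the augmentation filtration of the base $\FF_p^p$ viewed as an $\FF_p[C_p]$-module under the top $C_p$. Its exponent is $p^2$: taking $x$ a generator of one base copy of $C_p$ and $z$ a generator of the top $C_p$, a direct computation gives
\[
(xz)^p = \prod_{i=0}^{p-1} z^{-i} x z^i,
\]
the nonzero ``diagonal'' element of $\FF_p^p$, which has order $p$; hence $o(xz) = p^2$.

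For (\ref{Tp:yoshidacor1c}), the same pair $x, z$ shows $C_p \wr C_p$ is irregular: the derived subgroup of $\langle x, z \rangle = C_p \wr C_p$ is the augmentation submodule of $\FF_p^p$, which has exponent $p$, so $[\langle x, z \rangle, \langle x, z \rangle]^p = 1$; but $x^p = z^p = 1$ while $(xz)^p \ne 1$ by the preceding paragraph, violating the defining condition $(xz)^p \equiv x^p z^p \pmod{[\langle x, z \rangle, \langle x, z \rangle]^p}$ for regularity. For (\ref{Tp:yoshidacor1d}), note that the derived subgroup of $C_p \wr C_p$ is elementary abelian of rank $p - 1 \geq 2$ when $p$ is odd, hence non-cyclic; since the derived subgroup of any metacyclic group is cyclic (as a subgroup of the cyclic normal factor), $C_p \wr C_p$ is not metacyclic. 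In each case, quotient-inheritance combined with the failure of $C_p \wr C_p$ to satisfy the property rules out $C_p \wr C_p$ as a homomorphic image of $S$, and Theorem Y yields the conclusion. The only mild subtlety is (\ref{Tp:yoshidacor1c}), but it reduces to the exponent computation from (\ref{Tp:yoshidacor1b}); I do not anticipate any real obstacle beyond that bookkeeping.
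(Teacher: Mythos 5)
Your proof is correct, and for items (\ref{Tp:yoshidacor1a})--(\ref{Tp:yoshidacor1c}) it follows the paper's argument exactly: apply Theorem Y in the contrapositive, note that class, exponent and regularity are inherited by quotients, and check that $C_p\wr C_p$ has class $p$, exponent $p^2$, and is irregular (the paper merely asserts these facts, whereas you supply the computation $(xz)^p=\prod_{i=0}^{p-1}z^{-i}xz^i\neq 1$, which is exactly the verification needed). The one place you genuinely diverge is item (\ref{Tp:yoshidacor1d}): the paper deduces it from item (\ref{Tp:yoshidacor1c}) by citing Huppert's theorem that every metacyclic $p$-group is regular for $p$ odd, while you argue directly that metacyclicity passes to quotients and that $C_p\wr C_p$ is not metacyclic for odd $p$ because its derived subgroup is elementary abelian of rank $p-1\geq 2$, hence non-cyclic, whereas a metacyclic group has cyclic derived subgroup. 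Your route is more elementary and self-contained (no appeal to the regularity of metacyclic $p$-groups), and it isolates precisely where $p=2$ fails ($p-1=1$, consistent with $C_2\wr C_2\cong D_8$ being metacyclic); the paper's route is shorter and records the stronger structural fact that odd-order metacyclic $p$-groups are regular. The only cosmetic caveat is your remark that regularity ``is a universally quantified identity'': strictly speaking the regularity condition $(xy)^p\equiv x^py^p$ modulo $\mho_1(\langle x,y\rangle')$ refers to the subgroup generated by the two elements, so quotient-closure requires the one-line observation that the image of $\mho_1(\langle x,y\rangle')$ lands in $\mho_1(\langle \bar x,\bar y\rangle')$; this is standard and does not affect correctness.
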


\begin{proof}
The first two conditions imply the result since $C_p \wr C_p$ has nilpotence class $p$ and contains an element of order $p^2$. The third statement is immediate since a regular $p$-group does not have a homomorphic image isomorphic to $C_p \wr C_p$ and the last statement follows from the third as every metacyclic $p$-group is regular if $p$ is odd (cf. \cite[Satz III.10.2]{HuppertEG}). 
\end{proof}

Note that the last statement of the corollary is also a consequence of \cite[Proposition 5.4]{Stancu2006} or \cite[Theorem 4.1]{DRV2007} and that it cannot be extended to $p = 2$ since $C_2 \wr C_2 \cong D_8$ is metacyclic. However, if $p = 2$ and $S$ is metacyclic and not homocyclic abelian, dihedral, semidihedral or generalized quaternion, then $\CF$ is trivial and the result holds (see \cite{CravenGlesser} for a complete classification of fusion systems on metacyclic $p$-groups).   Also, \ref{T:yoshidacor1}.\ref{Tp:yoshidacor1c} is considerably different than \ref{T:yoshidacor1}.\ref{Tp:yoshidacor1a} since a regular $p$-group can have an arbitrarily large nilpotence class.

\begin{thm}[Huppert]\label{T:huppert}
Let $p$ be an odd prime and let $\CF$ be a saturated fusion system on a finite $p$-group $S$. If $S$ is nonabelian and metacyclic, then $[S, \CF] < S$.

\end{thm}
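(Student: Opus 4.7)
Since $p$ is odd and $S$ is metacyclic, Corollary~\ref{T:yoshidacor1}\,(\ref{Tp:yoshidacor1d}) gives $[S,\CF]=[S,\N_\CF(S)]$, so it suffices to prove $[S,\N_\CF(S)]<S$. By the saturation axioms, $\Out_\CF(S)=\Aut_\CF(S)/\Inn(S)$ is a $p'$-group, so Schur--Zassenhaus supplies a complement $K\le\Aut_\CF(S)$ to $\Inn(S)$; then $\N_\CF(S)=\CF_S(G)$ for $G=S\rtimes K$ with $S$ a Sylow $p$-subgroup of $G$, and the classical focal subgroup theorem identifies $[S,\N_\CF(S)]=S\cap[G,G]=S'\cdot[S,K]$.

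Now $K$ is a $p'$-group acting on the abelian $p$-group $S/S'$, so coprime action (Maschke) gives $S/S'=[S/S',K]\oplus C_{S/S'}(K)$. Hence $[S,\N_\CF(S)]<S$ if and only if $C_{S/S'}(K)\ne 0$. Since the kernel of $\Aut(S)\to\Aut(S/\Phi(S))$ is a $p$-group, $K$ acts faithfully on $S/\Phi(S)\cong\FF_p^{2}$, and it is enough to exhibit a nonzero $K$-fixed vector in $S/\Phi(S)$.

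To produce such a vector, fix a standard presentation $S=\la a,b\mid a^{p^\alpha},\ b^{p^\beta}=a^{p^\gamma},\ bab^{-1}=a^{1+p^\delta}\ra$ with $1\le\delta<\alpha$ (the nonabelian hypothesis). For any $\varphi\in\Aut(S)$ write $\varphi(a)=a^{i}b^{j}$ and $\varphi(b)=a^{k}b^{\ell}$, impose the commutator relation $bab^{-1}=a^{1+p^\delta}$, and invoke the $p$-adic identity $v_p((1+p^\delta)^{n}-1)=v_p(n)+\delta$, valid for $p$ odd and $\delta\ge 1$. A case split on $\beta$ versus $\delta$ (with the bound $v_p(k)\ge\alpha-\beta$ coming from the order constraint on $\varphi(b)$ when $\beta\le\delta$) shows that $j\equiv 0\pmod p$ or $k\equiv 0\pmod p$ in the respective cases; combined with the mod-$p$ congruence $i(\ell-1)\equiv kj\pmod p$ arising from the $a$-component of the relation, this forces $\ell\equiv 1\pmod p$. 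Therefore the image of $\Aut(S)$ in $\GL_2(\FF_p)=\Aut(S/\Phi(S))$ lies in a Borel subgroup with bottom-right entry equal to $1$, and any $p'$-subgroup of such a Borel fixes a line in $\FF_p^{2}$ (being conjugate into the standard diagonal torus by a unipotent element); in particular so does $K$.

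The main obstacle is the analysis in the last paragraph: carefully deriving $\ell\equiv 1\pmod p$ in both subcases $\beta>\delta$ and $\beta\le\delta$ requires expanding $(1+p^\delta)^{n}$ to the correct order and bounding $v_p(k)$ or $v_p(j)$ from the remaining relations in the presentation. The oddness of $p$ enters crucially through the $p$-adic identity, which fails at $p=2$; and indeed dihedral and related metacyclic $2$-groups are well-known counterexamples to the statement, consistent with the remarks following Corollary~\ref{T:yoshidacor1}.
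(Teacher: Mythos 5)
Your reduction is exactly the paper's: both proofs invoke Corollary~\ref{T:yoshidacor1}(\ref{Tp:yoshidacor1d}) to replace $\CF$ by $\N_\CF(S)$ and then realize $\N_\CF(S)$ as $\CF_S(G)$ for a finite group $G$ with Sylow $p$-subgroup $S$. (The paper gets $G$ from constrainedness via \cite[Proposition 4.3]{BCGLO2005}; your direct construction $G=S\rtimes K$ via Schur--Zassenhaus applied to $\Inn(S)\unlhd\Aut_\CF(S)$ is equally valid here, since saturation makes $\Out_\CF(S)$ a $p'$-group and the morphisms of $\N_\CF(S)$ are precisely the restrictions of elements of $\Aut_\CF(S)=\Inn(S)K$.) The identification $[S,\N_\CF(S)]=S\cap G'=S'[S,K]$, the passage to $S/\Phi(S)$ by coprimality and Burnside's theorem on the kernel of $\Aut(S)\to\Aut(S/\Phi(S))$, and the fixed-line argument for a $p'$-subgroup of the relevant Borel are all correct.

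Where you diverge is the endgame: the paper simply cites the classical fact \cite[Hilfssatz IV.8.5]{HuppertEG}, whereas you set out to reprove it by showing that every $\varphi\in\Aut(S)$ induces on $S/\Phi(S)$ a matrix with $\ell\equiv 1\pmod p$. That claim is the entire content of Huppert's lemma, and your treatment of it is only a sketch: the congruence $i(\ell-1)\equiv kj\pmod p$ and the dichotomy ``$j\equiv 0$ or $k\equiv 0$'' are asserted via a case split on $\beta$ versus $\delta$ that you explicitly do not carry out, and the non-split case ($\gamma<\alpha$, where $\langle a\rangle\cap\langle b\rangle\neq 1$ so the relation does not decouple into separate congruences on the $a$- and $b$-components) needs genuine care. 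So as written there is a gap precisely at the one step that carries the arithmetic weight of the theorem. The fix is either to complete that case analysis in full (feasible, and your verified special cases such as $M_{p^3}$ and $C_{p^2}\rtimes C_{p^2}$ suggest the statement is right), or to do what the paper does and quote Huppert's Hilfssatz, which renders the whole third paragraph unnecessary.
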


\begin{proof}
By Corollary \ref{T:yoshidacor1}.\ref{Tp:yoshidacor1d}, we may assume that $\N_S(\CF) = \CF$. In this case, $\CF$ is constrained and so, by \cite[Proposition 4.3]{BCGLO2005}, $\CF = \CF_S(G)$ for some finite group $G$ with Sylow $p$-subgroup $S$. Thus, $[S, \CF] = [S, \CF_S(G)] < S$ by \cite[Hilfssatz IV.8.5]{HuppertEG}.
\end{proof}

%*************************************************
\section{$p$-power index transfer}\label{S:OpF}%**
%*************************************************

In this section, we prove important properties of characteristic idempotents and transfer maps for fusion systems that will be used in the next section to prove Theorem T. The elementary situation in group theory we want to mimic is the following: let $G$ be a finite group with two subgroups $S$ and $L$ such that $G=SL$, i.e.\ $G=\{ xy \mid x\in S, y\in L \}$, and let $N=S\cap L$. Then we have a bijection between left coset spaces
\[
	L/N \xrightarrow{\cong} G/S
\]
induced by the inclusion $L \hookrightarrow G$. As a consequence, we get a commutative diagram
\[
\xymatrix{
S\ar[r] & G\ar[r]^{t_{S}^{G}}& S/S'\\
N\ar[r]\ar[u] &L\ar[u]\ar[r]^{t_{N}^{L}}& N/N'\ar[u]_{\rho}.
}
\]
where $t_{S}^{G}$, $t_{N}^{L}$ are group transfer maps, $\rho$ is the map induced by the inclusion $N \hookrightarrow S$, and all other arrows are inclusions. Furthermore, if $S$ is a Sylow $p$-subgroup of $G$, $L\unlhd G$ and we denote $\CF=\CF_S(G)$ and $\CN=\CF_N(L)$ the outer rectangle in the above gives a commutative diagram
\[
\xymatrix{
S\ar[rr]^{\tau_{S,G}^{\CF}}&& S/S'\\
N\ar[rr]^{\tau_{N,L}^{\CN}}\ar[u]^{\mathrm{incl}}&& N/N'\ar[u]_{\rho}.
}	
\]
In particular, we have $\rho(\Img(\tau^{\CN}_{N,L}))\subseteq \Img(\tau^{\CF}_{S,G})$. This inclusion between images of transfers is the result we want to generalize to fusion systems. As cosets do not make sense in the fusion system setting we use an alternative approach to prove this inclusion in the group case.

Viewing $G$ as an $(S,S)$-biset and $L$ as an $(N,N)$-biset in the obvious way,there is an isomorphism of $(N,S)$-bisets
\[
	S \times_N L \xrightarrow{\cong} G
\]
induced by the product map $(x,y)\in S\times L \mapsto xy \in G$.  Note that $G$ and $L$ are characteristic bisets for the fusion systems $\CF$ and $\CN$, respectively. We can rewrite this isomorphism of $(N,S)$-bisets as the following equality in $A(N,S)$:
\[
	[N, \mathrm{incl}]^S_N \circ L =  G \circ [N, \mathrm{incl}]^S_N.
\]
%Notice that $[N, \mathrm{incl}]^S_N\in A(N,S)$ is represented the $(N,S)$-biset $S$.
We give below an analogous equality in terms of characteristic idempotents, valid for any saturated fusion system, whose proof was provided by K\'ari Ragnarsson through private communication. (See also \cite{RagnarssonPark}) As we show in Corollary \ref{cor_keyforTate}, this is enough to deduce the inclusion between the images of the transfers. We refer the reader to the appendix for the definition and properties of a saturated subsystem of $p$-power index. Recall that $\omega_\Ff$ is the characteristic idempotent of $\Ff$.

\begin{thm}\label{RelationCharIdempPPower}
Let $\CF$ be a saturated fusion system on the $p$-group $S$. If $N$ is a normal subgroup of $S$ containing $O^p_\CF(S)$ and $\CF_N$ is the unique saturated subsystem of $\CF$ on $N$ of $p$-power index, then
\begin{enumerate}
\item $[N, \mathrm{incl}]^S_N\circ \omega_{\Ff_N} = \omega_\Ff\circ [N, \mathrm{incl}]^S_N$;
\item $\omega_{\Ff_N}\circ [N, \id]^N_S = [N, \id]^N_S \circ \omega_\Ff$.
\end{enumerate}
\end{thm}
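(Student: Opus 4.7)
The plan is to deduce both equalities from a single biset-level identity by passing to a $p$-adic limit, and to handle the two parts together via a transposition symmetry on double Burnside modules.

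First, the anti-involution $[K,\psi]^H_G \mapsto [\psi(K),\psi^{-1}]^G_H$ extends $\ZZ_{(p)}$-linearly to an anti-isomorphism $A(G,H)_{(p)} \to A(H,G)_{(p)}$ that reverses biset composition, sends $[N,\incl]^S_N$ to $[N,\id]^N_S$, and fixes both $\omega_\CF$ and $\omega_{\CF_N}$: their images under the anti-involution are again idempotent characteristic elements for $\CF$ and $\CF_N$ respectively, and such idempotents are unique by \cite{Ragnarsson2006spectra}. Hence part~(2) is the image of part~(1) under this anti-involution, and it suffices to prove~(1).

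The strategy for (1) is to construct a compatible pair consisting of a characteristic biset $\Omega \in A_\CF(S,S)$ of $\CF$ and a characteristic biset $\Omega_N \in A_{\CF_N}(N,N)$ of $\CF_N$ satisfying the biset identity
\[
\Omega \circ [N,\incl]^S_N = [N,\incl]^S_N \circ \Omega_N \quad\text{in } A(N,S).
\]
Granting this identity, associativity of biset composition immediately yields $\Omega^k \circ [N,\incl]^S_N = [N,\incl]^S_N \circ \Omega_N^k$ for every $k \geq 1$. Choose a single $M$ large enough that both $\Omega^M$ and $\Omega_N^M$ are idempotent modulo $p$. Ragnarsson's construction then identifies the characteristic idempotents as the $p$-adic limits $\omega_\CF = \lim_k \Omega^{Mp^k}$ in $\pcomp{A(S,S)}$ and $\omega_{\CF_N} = \lim_k \Omega_N^{Mp^k}$ in $\pcomp{A(N,N)}$. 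Because biset composition is continuous in the $p$-adic topology on the completed Burnside modules, passing to the limit in the iterated identity gives
\[
\omega_\CF \circ [N,\incl]^S_N = [N,\incl]^S_N \circ \omega_{\CF_N} \quad\text{in } \pcomp{A(N,S)}.
\]
Both sides already lie in $A(N,S)_{(p)}$, which injects into the $p$-completion, so this is exactly~(1).

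The main obstacle is establishing the biset identity itself. The group-theoretic motivation is transparent: if $\CF = \CF_S(G)$ and $\CF_N = \CF_N(L)$ for a normal subgroup $L \trianglelefteq G$ with $N = S \cap L$ and $G = SL$, then $\Omega = G$ and $\Omega_N = L$ satisfy the required identity via the isomorphism of $(N,S)$-bisets $G \cong S \times_N L$ encoding $G = SL$. In the general saturated setting one starts with a characteristic biset $\Omega_N \in A_{\CF_N}(N,N)$ provided by \cite[5.5]{BLO2003theory} and constructs $\Omega$ by an induction-type procedure along a transversal for the $p$-power index structure of $\CF_N$ in $\CF$, using the hypothesis $N \supseteq O^p_\CF(S)$ and the structural results on $p$-power index subsystems from the appendix and~\cite{BCGLO2007extensions}. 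Verifying that the resulting $\Omega$ is a characteristic biset for $\CF$ and that the biset identity holds is the substantive content of the proof---this is the ``key property of the transfer'' for which K\'ari Ragnarsson supplied a proof, as acknowledged in the introduction.
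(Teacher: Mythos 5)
Your reduction of part~(2) to part~(1) via the opposite (anti-)homomorphism is exactly what the paper does, and your observation that the opposite of a characteristic idempotent is again one (hence fixed by uniqueness) is correct. The $p$-adic limit argument is also sound in principle: \emph{if} you had a characteristic biset $\Omega$ for $\CF$ and a characteristic biset $\Omega_N$ for $\CF_N$ satisfying $\Omega \circ [N,\incl]^S_N = [N,\incl]^S_N \circ \Omega_N$, then iterating, choosing a common $M$, and passing to the limit in $\pcomp{A(N,S)}$ would indeed yield the identity for the idempotents.

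The problem is that the entire mathematical content of the theorem is concentrated in the step you defer: the existence of such a compatible pair $(\Omega,\Omega_N)$. Your last paragraph describes this only as ``an induction-type procedure along a transversal for the $p$-power index structure,'' which is not an argument --- there is no ambient group in which to choose a transversal, and for exotic fusion systems it is not at all clear that an honest biset $\Omega$ with the required intertwining property exists (the naive candidate $S\times_N \Omega_N\times_N S$ is neither obviously $\CF$-stable nor does it have the right augmentation). So as written, the proposal proves the theorem only conditionally on an unproved, and arguably harder, statement. The paper avoids this entirely by never constructing compatible bisets: it works directly with the idempotents, using Lemma~\ref{StabilityCharacterization} (right $\CF_N$-stability of $\omega_\CF\circ[N,\incl]^S_N$ lets one absorb $\omega_{\CF_N}$ on the right), reducing via the opposite homomorphism to showing that $\omega_{\CF_N}\circ[N,\id]^N_S$ is right $\CF$-stable, and then verifying this by a double coset computation that uses normality and strong closure of $N$, the morphism decomposition from Proposition~\ref{normal p-subsystem}, and the invariance characterization of Lemma~\ref{NormalCharacterization}. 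You would need to either carry out that kind of stability computation yourself or give an actual construction of the compatible pair; without one of these, the proof has a genuine gap.
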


\begin{cor}\label{CharacteristicTransRes}
In the situation of Theorem~\ref{RelationCharIdempPPower}, we have
\begin{equation}
\tr^S_N \circ H^*(\omega_{\Ff_N};A) = H^*(\omega_\Ff;A) \circ \tr^S_N : H^*(N;A)\to H^*(\CF;A)\label{eqn4}
\end{equation}
\begin{equation}
H^*(\omega_{\Ff_N};A) \circ \res^S_N = \res^S_N \circ H^*(\omega_\Ff;A): H^*(S;A)\to H^*(\CF_N;A)\label{eqn3}
\end{equation}
for any $\ZZ_{(p)}S$-module $A$.
\end{cor}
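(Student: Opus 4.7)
The plan is to apply the ring homomorphism $H^*(-;A)$ to the two identities in Theorem~\ref{RelationCharIdempPPower} and to read off \eqref{eqn3} and \eqref{eqn4} directly. The only preparation needed is to translate the auxiliary Burnside module elements $[N,\incl]^S_N$ and $[N,\id]^N_S$ into familiar cohomological operators.

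Using the general formula $H^*([K,\psi]^H_G;A) = \tr^G_K \circ \psi^*$, I would identify
\[
H^*([N,\incl]^S_N;A) = \res^S_N \colon H^*(S;A) \to H^*(N;A),
\]
since here $K=G=N$, $H=S$ and $\psi=\incl$, so that $\tr^N_N=\id$ and $\psi^*=\res^S_N$; and
\[
H^*([N,\id]^N_S;A) = \tr^S_N \colon H^*(N;A) \to H^*(S;A),
\]
since here $K=N\leq S=G$, $H=N$ and $\psi=\id$, so that $\tr^G_K=\tr^S_N$ and $\psi^*=\id$.

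Since $H^*(-;A)$ is compatible with biset composition, applying it to the first identity $[N,\incl]^S_N\circ \omega_{\CF_N} = \omega_\CF\circ [N,\incl]^S_N$ of Theorem~\ref{RelationCharIdempPPower} yields the equality of maps $H^*(S;A)\to H^*(N;A)$ stated in \eqref{eqn3}, and applying it to the second identity $\omega_{\CF_N}\circ [N,\id]^N_S = [N,\id]^N_S \circ \omega_\CF$ yields the equality of maps $H^*(N;A)\to H^*(S;A)$ stated in \eqref{eqn4}. To confirm the stated codomains $H^*(\CF;A)$ and $H^*(\CF_N;A)$, recall that $\omega_\CF$ and $\omega_{\CF_N}$ are characteristic idempotents, so $H^*(\omega_\CF;A)$ and $H^*(\omega_{\CF_N};A)$ are idempotent endomorphisms of $H^*(S;A)$ and $H^*(N;A)$ with images exactly $H^*(\CF;A)$ and $H^*(\CF_N;A)$, as recalled after Corollary~\ref{corollary_[omega]uptoqnumber}; both right-hand sides visibly factor through one of these idempotents, placing the image in the claimed subgroup. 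The whole argument is essentially a bookkeeping exercise once Theorem~\ref{RelationCharIdempPPower} is in hand; the only point requiring care is the direction of composition, since the passage from biset composition to composition of cohomology maps reverses arrows.
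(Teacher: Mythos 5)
Your proof is correct and follows exactly the paper's argument: the paper's proof likewise just cites Theorem~\ref{RelationCharIdempPPower} together with the identifications $H^*([N,\incl]_N^S;A)=\res^S_N$ and $H^*([N,\id]_S^N;A)=\tr_N^S$. Your extra care about the order reversal under $H^*(-;A)$ and about the codomains is sound bookkeeping, not a different method.
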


\begin{proof}
This follows from Theorem~\ref{RelationCharIdempPPower} and from the equalities $H^*([N,\mathrm{incl}]_N^S)=\res^S_N$ and $H^*([N,\id]_S^N)=\tr_N^S$.
\end{proof}

\begin{cor}\label{cor_keyforTate}
In the situation of Theorem~\ref{RelationCharIdempPPower}, the diagram
\[
\xymatrix{
S\ar[rr]^{\tau_{S,\omega_\CF}^{\CF}}&& S/S'\\
N\ar[rr]^{\tau_{N,\omega_{\CF_N}}^{\CF_N}}\ar[u]^{\mathrm{incl}}&& N/N'\ar[u]_{\rho}
}
\]
where $\rho$ is the map induced by the inclusion $N \hookrightarrow S$, is commutative. In particular, we have 
\[
	\rho(\Img(\tau^{\CF_N}_N))\subseteq \Img(\tau^{\CF}_S).
\]
\end{cor}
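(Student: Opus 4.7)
The plan is to interpret both composites around the square as elements of $\Hom(N, S/S') = H^1(N; S/S')$ (trivial $N$-action) and match them via equation \eqref{eqn3} from Corollary \ref{CharacteristicTransRes}, followed by naturality of $H^1(\omega_{\CF_N}; -)$ in the coefficient module.

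First I would unwind Definition \ref{definition_transfer}: the upper composite is
\[
\tau^{\CF}_{S,\omega_\CF} \circ \incl = \res^S_N\bigl(H^1(\omega_\CF; S/S')(\pi_S)\bigr),
\]
where $\pi_S : S \to S/S'$ is the natural projection and I use that, for trivial coefficients, precomposition with $\incl : N \hookrightarrow S$ is precisely the cohomological restriction $\res^S_N$ on $H^1(-;S/S')$. Then I would apply \eqref{eqn3} with $A = S/S'$ (an abelian $p$-group, hence a $\ZZ_{(p)}S$-module with trivial $S$-action) to push $\res^S_N$ through $H^1(\omega_\CF; S/S')$, obtaining $H^1(\omega_{\CF_N}; S/S')(\pi_S|_N)$. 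Since $N' \leq S'$, the restricted homomorphism factors as $\pi_S|_N = \rho \circ \pi_N$, where $\pi_N : N \to N/N'$ is the projection and $\rho$ is the map in the statement.

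Finally I would exploit the naturality of $H^1(\omega_{\CF_N}; -)$ in the coefficient module: formula \eqref{transfer} exhibits it as a $\ZZ_{(p)}$-linear combination of composites $\tr^N_P \circ \varphi^*$, each of which is natural in the coefficients. Applied to $\rho : N/N' \to S/S'$, this yields
\[
H^1(\omega_{\CF_N}; S/S')(\rho \circ \pi_N) = \rho \circ H^1(\omega_{\CF_N}; N/N')(\pi_N) = \rho \circ \tau^{\CF_N}_{N,\omega_{\CF_N}},
\]
which is precisely the lower composite of the square. Chaining the three identities gives the commutativity of the diagram, and the inclusion $\rho(\Img \tau^{\CF_N}_N) \subseteq \Img \tau^{\CF}_S$ then follows immediately by taking images. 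I do not anticipate a genuine obstacle here: the substantive content has already been packaged into Theorem \ref{RelationCharIdempPPower}, and the remainder is unwinding definitions together with the (essentially formal) coefficient-naturality of the double-Burnside-module action on cohomology.
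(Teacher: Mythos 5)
Your proposal is correct and follows essentially the same route as the paper: the paper's proof is exactly the chase through equation \eqref{eqn3} of Corollary~\ref{CharacteristicTransRes} with $A=S/S'$, combined with coefficient-naturality of $H^1(\omega_{\CF_N};-)$ along $\rho_*\colon H^1(N;N/N')\to H^1(N;S/S')$ and the identity $\res^S_N(\pi_S)=\rho_*(\pi_N)$, only packaged as a stacked commutative diagram rather than a chain of displayed identities. No gaps.
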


\begin{proof}
By Corollary~\ref{CharacteristicTransRes}, we get the following commutative diagram
\[
\xymatrix{
H^1(S,S/S')\ar[rr]^{H^1(\omega_{\CF},S/S')}\ar[d]_{\res^S_N} &&H^1(S,S/S')\ar[d]^{\res^S_N}\\
H^1(N,S/S')\ar[rr]^{H^1(\omega_{\CF_N},S/S')} &&H^1(N,S/S')\\
H^1(N,N/N')\ar[rr]^{H^1(\omega_{\CF_N},N/N')}\ar[u]^{\rho_*} &&H^1(N,N/N')\ar[u]_{\rho_*}.
}
\]
For a group $H$, let $\pi_H\colon H\to H/H'$ denote the canonical surjection.  Since $\res^S_N(\pi_S)=\rho_*(\pi_N)$, chasing arrows gives
\[
	\tau_{S,\omega_\CF}^{\CF}\circ\mathrm{incl}_N^S = \rho\circ\tau_{N,\omega_{\CF_N}}^{\CF_N},
\]
as desired.
\end{proof}

Now we turn to the proof of Theorem~\ref{RelationCharIdempPPower}. First we need several lemmas.

\begin{lem}[\cite{Ragnarsson2006spectra}]\label{StabilityCharacterization}
Let $\omega_\Ff\in A(S,S)_{(p)}$ be the characteristic idempotent of a
saturated fusion system $\Ff$ on a finite $p$-group $S$. Let $T$ be a finite $p$-group and let $X\in A(S,T)_(p)$. The following are equivalent:
\begin{enumerate}
\item $X\circ\omega_\Ff = X$.
\item $X$ is right $\CF$-stable, in the sense that for all $P\le S$ and $\varphi\in\Hom_\Ff(P,S)$ we have
$X\circ [P,\varphi]^S_P = X\circ [P,\mathrm{incl}]^S_P$.
\end{enumerate}
\end{lem}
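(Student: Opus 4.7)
Plan. The direction (1) $\Rightarrow$ (2) is immediate from the fact that $\omega_\Ff$ is itself right $\Ff$-stable (one of the defining properties of a characteristic element, cf.\ Definition~\ref{def_charbiset}): if $X \circ \omega_\Ff = X$, then for every $P \le S$ and $\varphi \in \Hom_\Ff(P,S)$,
\[
X \circ [P,\varphi]^S_P = X \circ \omega_\Ff \circ [P,\varphi]^S_P = X \circ \omega_\Ff \circ [P,\mathrm{incl}]^S_P = X \circ [P,\mathrm{incl}]^S_P,
\]
so $X$ is right $\Ff$-stable.

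For (2) $\Rightarrow$ (1), I would set $D := X - X\circ\omega_\Ff$. Then $D$ is right $\Ff$-stable by linearity of the stability condition, and $D \circ \omega_\Ff = 0$ by idempotence of $\omega_\Ff$; it suffices to deduce $D = 0$. The main tool is the explicit construction of $\omega_\Ff$ recalled before Definition~\ref{def_charidem}: fix a characteristic biset $\Omega \in A_\Ff(S,S)$ for $\Ff$ and an exponent $M$ large enough that $\omega_\Ff = \lim_{k \to \infty} \Omega^{Mp^k}$ in $\pcomp{A(S,S)}$. One also needs the combinatorial identity
\[
[Q,\psi]^S_S = [Q,\psi]^S_Q \circ [Q,\mathrm{id}]^Q_S,
\]
verified at once by $S \times_{(Q,\psi)} Q \times_Q S \cong S \times_{(Q,\psi)} S$. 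Writing $\Omega = \sum_i c_i [Q_i,\psi_i]^S_S$ with $\psi_i \in \Hom_\Ff(Q_i,S)$ and applying right $\Ff$-stability of $D$ to each term via this identity yields
\[
D\circ [Q_i,\psi_i]^S_S = D\circ [Q_i,\psi_i]^S_{Q_i}\circ [Q_i,\mathrm{id}]^{Q_i}_S = D\circ [Q_i,\mathrm{incl}]^S_{Q_i}\circ [Q_i,\mathrm{id}]^{Q_i}_S = D\circ [Q_i,\mathrm{incl}]^S_S,
\]
so $D\circ\Omega = D\circ\widetilde\Omega$ with $\widetilde\Omega := \sum_i c_i[Q_i,\mathrm{incl}]^S_S$. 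Since right composition with the right-$\Ff$-stable $\Omega$ preserves right $\Ff$-stability, an induction on $n$ gives $D\circ\Omega^n = D\circ\widetilde\Omega^n$ for all $n \ge 0$.

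The crux is to identify $\widetilde\Omega$ as a virtual characteristic biset for the trivial fusion system $\Ff_S(S)$: it lies in $A_{\Ff_S(S)}(S,S)$, its right and left $\Ff_S(S)$-stability are automatic because $[P,c_s]^S_P = [P,\mathrm{incl}]^S_P$ already in $A(P,S)$ for every $s \in S$, and $\epsilon(\widetilde\Omega) = \epsilon(\Omega)$ is coprime to $p$. Enlarging $M$ if necessary so that both $\Omega^M$ and $\widetilde\Omega^M$ are idempotent modulo $p$, the two sequences $\Omega^{Mp^k}$ and $\widetilde\Omega^{Mp^k}$ both converge $p$-adically to their respective characteristic idempotents $\omega_\Ff$ and $\omega_{\Ff_S(S)} = [S,\mathrm{id}]^S_S = \mathrm{id}_S$. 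Passing to the limit in $D\circ\Omega^{Mp^k} = D\circ\widetilde\Omega^{Mp^k}$ gives
\[
0 = D\circ\omega_\Ff = D\circ\mathrm{id}_S = D,
\]
as required.

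The main obstacle is precisely this final identification: verifying that the ``trivialised'' element $\widetilde\Omega$ qualifies as a characteristic biset for $\Ff_S(S)$ (which hinges on the observation that $S$-conjugations and inclusions define the same element of $A(P,S)$), and that one can choose a single exponent $M$ along which both $\Omega^{Mp^k}$ and $\widetilde\Omega^{Mp^k}$ converge $p$-adically. These are the steps where one must work carefully in $\pcomp{A(S,S)}$ and invoke the uniqueness of the characteristic idempotent in both the ambient fusion system $\Ff$ and the trivial one $\Ff_S(S)$.
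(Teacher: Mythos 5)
Your proof is correct, and it is worth noting that the paper itself does not supply one: it defers entirely to \cite{Ragnarsson2006spectra}, asserting that the argument for stable maps in Corollary 6.4 there carries over to elements of $A(S,T)_{(p)}$. Your argument is a complete, self-contained algebraic substitute. The direction $(1)\Rightarrow(2)$ is exactly the intended triviality, using right $\CF$-stability of $\omega_\CF$ and associativity. For $(2)\Rightarrow(1)$, the reduction to $D:=X-X\circ\omega_\CF=0$, the factorization $[Q,\psi]^S_S=[Q,\psi]^S_Q\circ[Q,\id]^Q_S$, and the resulting identity $D\circ\Omega^n=D\circ\widetilde{\Omega}^n$ all check out; the induction is legitimate because $Y\circ\Omega$ is right $\CF$-stable for \emph{any} $Y$ (as $\Omega$ is right $\CF$-stable), so the one-step replacement may be applied to $D\circ\Omega^n$. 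The decisive observation that $\widetilde{\Omega}$ is a virtual characteristic biset for $\CF_S(S)$ is also right: $[P,c_s]^S_P=[P,\incl]^S_P$ holds already in $A(P,S)$ (and similarly on the left), so stability for the trivial system is automatic, $\epsilon$ is preserved, and $\omega_{\CF_S(S)}=[S,\id]^S_S$ is the unit of $A(S,S)_{(p)}$ by uniqueness; a common exponent $M$ exists because if $\Omega^M$ is idempotent mod $p$ then so is every $\Omega^{Mk}$. Passing to the $p$-adic limit is justified since composition is $\pcomp{\ZZ}$-bilinear on finitely generated free modules, hence continuous, giving $0=D\circ\omega_\CF=D\circ[S,\id]^S_S=D$. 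What your route buys is independence from the stable-homotopy formulation in \cite{Ragnarsson2006spectra}: everything is reduced to the limit construction of $\omega_\CF$ recalled after Definition~\ref{def_charidem}, applied simultaneously to $\CF$ and to the trivial fusion system.
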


\begin{proof}
This is proved for stable maps in \cite[Corollary 6.4]{Ragnarsson2006spectra}, but the same argument
works for $X\in A(S, T)_{(p)}$.
\end{proof}

For the definition of \normal subsystem used in the next lemma, we refer the reader to Definition~\ref{dfn_Markusnormalsubsystem} in the appendix.

\begin{lem}[{\cite[Theorem 8.2]{RagnarssonStancu2009idempotents}}]\label{NormalCharacterization}
Let $\Ff$ be a saturated fusion system on a finite $p$-group $S$ and let $\Nn$ be a saturated
fusion subsystem of $\Ff$ on a strongly $\CF$-closed subgroup $N$ of $S$. 
Let $\omega_\Nn$ denote the characteristic idempotent of $\Nn$. The following are equivalent:
\begin{enumerate}
\item $\Nn$ is an \normal subsystem of $\Ff$.
\item For every subgroup $Q$ of $N$ and every morphism $\varphi\in\Hom_\Ff(Q,S)$, 
the following identity in $A(Q,Q)_{(p)}$ holds:
$$[\varphi(Q),\varphi^{-1}]^Q_N\circ\omega_\Nn\circ [Q,\varphi]^N_Q = [Q,\id]_N^Q\circ\omega_\Nn\circ[Q,\mathrm{incl}]_Q^N\,.$$
\end{enumerate}
\end{lem}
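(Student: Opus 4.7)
The plan rests on the universal characterization of $\omega_\Nn$ as the unique idempotent in $A(N,N)_{(p)}$ that is both left- and right-$\Nn$-stable with augmentation $1$. Through this lens, the displayed biset identity should be read as the assertion that conjugation by $\varphi$ preserves the fusion data encoded in $\omega_\Nn$, which is precisely the \normal subsystem condition for $\Nn$ in $\CF$.

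For the direction (1) $\Rightarrow$ (2), I would fix $\varphi \in \Hom_\CF(Q, S)$ with $Q \leq N$; since $N$ is strongly $\CF$-closed, $\varphi(Q) \leq N$. I would expand $\omega_\Nn = \sum c_{[P, \psi]} [P, \psi]^N_N$ and apply the Mackey-type formula for biset composition to rewrite both sides of the identity as explicit sums of generators in $A(Q, Q)_{(p)}$. A typical component appearing on the LHS corresponds to a morphism of the form $\varphi^{-1} \circ \psi \circ \varphi$ suitably restricted to intersections of subgroups; the hypothesis that $\Nn$ is \normal in $\CF$ says exactly that such a morphism again lies in $\Nn$. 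A term-by-term coefficient comparison with the analogous unconjugated expansion on the RHS then yields the identity.

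For the direction (2) $\Rightarrow$ (1), I would argue contrapositively: if $\Nn$ were not \normal in $\CF$, there would exist $\varphi \in \Hom_\CF(Q, S)$ with $Q \leq N$ and an $\Nn$-morphism $\psi$ whose $\varphi$-conjugate fails to lie in $\Nn$. Because $\omega_\Nn$ has full support on $\Hom_\Nn$ in a suitable sense (a consequence of two-sided $\Nn$-stability combined with nonzero augmentation), this failure would produce a generator of $A(Q,Q)_{(p)}$ with nonzero coefficient on the LHS but no matching term on the RHS, contradicting the identity. Alternatively, one could directly verify the two-sided $\Nn$-stability needed to recover the \normal structure by applying the biset identity to suitable test generators and invoking Lemma~\ref{StabilityCharacterization}.

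The main technical obstacle is the combinatorial bookkeeping in expanding the triple composition $[\varphi(Q),\varphi^{-1}]^Q_N \circ [P, \psi]^N_N \circ [Q, \varphi]^N_Q$ via Mackey double-coset decompositions and then identifying which generators survive with nonzero coefficient after summing against $\omega_\Nn$. The strongly $\CF$-closed hypothesis on $N$ is used repeatedly to guarantee that all images land inside $N$, and uniqueness of the characteristic idempotent is invoked to promote identities that initially hold only up to a $p'$-multiple into exact equalities in $A(Q,Q)_{(p)}$.
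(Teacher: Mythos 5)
A preliminary remark: the paper itself contains no proof of this lemma --- it is imported verbatim as \cite[Theorem 8.2]{RagnarssonStancu2009idempotents} --- so your sketch must stand on its own, and as written it has a genuine gap in each direction. For $(1)\Rightarrow(2)$: expanding $\omega_\Nn=\sum c_{[P,\psi]}[P,\psi]$ and applying the double coset formula, the invariance of $\Nn$ only tells you that every generator surviving on the left-hand side is of the form $[U,\rho]$ with $\rho$ a morphism of $\Nn$; it does \emph{not} tell you that its coefficient agrees with that of the corresponding generator on the right-hand side, and this coefficient matching is the entire content of the identity (two elements of $A(Q,Q)_{(p)}$ can both be supported on $\Nn$-morphisms without being equal; note also that the two Mackey expansions are indexed by different double coset spaces, $P\backslash N/\varphi(Q)$ versus $P\backslash N/Q$). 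To close this you need a further input: either the explicit Ragnarsson--Stancu formula for the coefficients of $\omega_\Nn$ together with their invariance under the relevant conjugation, or --- cleaner, and using only tools already in the paper --- the factorization of Lemma~\ref{lem:aschbacher}, writing $\varphi=\phi\circ\chi|_Q$ with $\chi\in\Aut_\CF(N)$ and $\phi\in\Hom_\Nn$: the $\phi$-part is absorbed by two-sided $\Nn$-stability of $\omega_\Nn$, and the $\chi$-part by uniqueness of the characteristic idempotent applied to $[N,\chi^{-1}]^N_N\circ\omega_\Nn\circ[N,\chi]^N_N=\omega_{{}^{\chi}\Nn}=\omega_\Nn$. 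Relatedly, your opening ``universal characterization'' is too weak: an idempotent of augmentation $1$ that is two-sided $\Nn$-stable need not equal $\omega_\Nn$ (the characteristic idempotent of any larger saturated system on $N$ also has these properties); uniqueness requires the support condition $\omega_\Nn\in A_\Nn(N,N)_{(p)}$.

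For $(2)\Rightarrow(1)$ the load-bearing step is the parenthetical claim that $\omega_\Nn$ ``has full support on $\Hom_\Nn$ in a suitable sense,'' presented as a consequence of stability plus nonzero augmentation. It is not a consequence of those two facts; correctly formulated, it is a substantive theorem of Ragnarsson--Stancu (the statement that $\Nn$ is recovered from the basis elements occurring in $\omega_\Nn$), proved via the explicit coefficient formula, and it is exactly what makes the idempotent ``remember'' the fusion system. Moreover, even granting it, the conclusion ``an offending generator appears on the left with nonzero coefficient and has no counterpart on the right'' needs more care: $\omega_\Nn$ is a virtual biset with coefficients of mixed sign, and in the Mackey expansion of the triple composition several generators $[P,\psi]$ of $\omega_\Nn$ can contribute to the same basis element of $A(Q,Q)_{(p)}$, so cancellation must be ruled out before you can assert that the bad generator survives. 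In short, the architecture of your plan is reasonable, but both directions currently delegate their hardest step to an unproved assertion.
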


%Let $\CF$ be a saturated fusion system $\CF$ on a finite $p$-group $S$. Consider a normal subgroup $N\lhd S$ containing the hyperfocal subgroup %$O^p_\Ff(S)$. According to \cite[Theorem 4.3]{BCGLO2007extensions} there is a unique saturated subsystem $\CF_N$ $p$-power index of $\CF$ associated %to $N$. We prove in the appendix (Proposition \ref{prp_stronglyclosedplusnormality}) that, in this situation, $N$ is strongly $\CF$-closed and %$\CF_N$ is a normal subsystem.

%\begin{lem}\label{NormalFrattini}
%Let $\Ff$ be a saturated fusion system on a finite $p$-group $S$. Let
%$O^p_\Ff(S)\le N \unlhd S$, and let $\Ff_N$ be the saturated subsystem of $p$-power
%index of $\Ff$ on $N$. Given $P\le N$ and a morphism $\varphi$ in
%$\Hom_\Ff(P,S)$, there exist $t\in S$ and $\psi\in\Hom_{\Ff_N}(P,S)$ such that $\varphi=\psi\circ c_t$.
%\end{lem}

%\begin{proof}
%This is proved by a Frattini argument, using the fact that
%$\Aut_\Ff(N)/\Aut_{\Ff_N}(N)=S/N$.
%{\bf [How about writing up the proof as follows?]}
%Lemma \ref{lem:aschbacher} applied to the normal subsystem $\CF_N$ gives $\alpha \in \Aut_\CF(N)$ and $\psi_1 \in \Hom_{\CF_N}(\alpha(P),N)$ such %that $\varphi = \psi_1 \circ \alpha|_P$. Since $N$ is normal in $S$, it is fully $\CF$-normalized, and so $\Aut_\CF(N) = O^p(\Aut_\CF(N))\Aut_S(N)$. %But by definition of $\CF_N$, $O^p(\Aut_\CF(N))$ is contained in $\Aut_{\CF_N}(N)$. Thus $\alpha = \psi_2 \circ c_t$ for some $t\in S$ and $\psi_2 %\in \Aut_{\CF_N}(N)$. Set $\psi=\psi_1\circ\psi_2|_{c_t(P)}$.  Then $\psi \in \Hom_{\CF_N}(c_t(P),N)$ and $\varphi=\psi\circ c_t$.
%\end{proof}

\begin{proof}[Proof of Theorem~\ref{RelationCharIdempPPower}]
First we remark that parts (1) and (2) of the theorem are equivalent by applying the opposite homomorphism \cite[Definition 3.19]{RagnarssonStancu2009idempotents}. We proceed to prove part (1). Note that by Proposition \ref{normal p-subsystem}, $N$ is a strongly $\CF$-closed subgroup of $\CF$ and $\CF_N$ is a \normal subsystem of $\CF$. 

Since $\Ff_N$ is a subsystem of $\Ff$, the $\Ff$-stability of $\omega_\Ff$ implies
	that $\omega_\Ff\circ [N,\mathrm{incl}]^S_N$ is $\Ff_N$-stable. Hence, by Lemma~\ref{StabilityCharacterization}, we have
$$\omega_\Ff \circ [N, \mathrm{incl}]^S_N = \omega_\Ff \circ [N, \mathrm{incl}]^S_N \circ \omega_{\Ff_N}$$
and it suffices to show that 
$$\omega_\Ff \circ [N, \mathrm{incl}]^S_N \circ \omega_{\Ff_N} = [N, \mathrm{incl}]^S_N \circ \omega_{\Ff_N}\,,$$
which, by applying the opposite homomorphism, is equivalent to
$$\omega_{\Ff_N} \circ [N, \id]^N_S \circ \omega_\Ff = \omega_{\Ff_N} \circ [N, \id]^N_S\,.$$

We prove this last equation by showing that $\omega_{\Ff_N} \circ [N, \id]^N_S$ is right $\CF$-stable.
Now, for $P\le S$ and $\varphi\in\Hom_\Ff(P,S)$, the double coset formula gives 
$$[N, \id]^N_S \circ [P,\varphi]^S_P = \displaystyle\sum_{x\in N\backslash S/\varphi(P)} [\varphi^{-1}(\varphi(P)\cap N^x), c_x \circ \varphi]^N_P\,.$$
Since $N$ is normal in $S$ we have $N^x = N$, and since $N$ is strongly $\Ff$-closed we have
$\varphi(P)\cap N = \varphi(P\cap N)$, so the equation simplifies to
$$[N, \id]^N_S \circ [P, \varphi]^S_P = \displaystyle\sum_{x\in [N\backslash S/\varphi(P)]} [P\cap N, c_x \circ \varphi]^N_P\,.$$
Using Lemma \ref{normal p-subsystem} on $(\varphi|_{P\cap N})^{-1}$ we find $t\in S$ and $\psi\in\Hom_{\Ff_N}(P\cap N,N)$ such that $\varphi|_{P\cap N} = c_t \circ\psi$.

Using that $[N, c_x]^N_N\circ [N, c^{-1}_x]^N_N = [N, \id]^N_N$ 
in Lemma \ref{NormalCharacterization} for the \normal subsystem $\CF_N$ we get that, for all $x\in S$,
$\omega_{\Ff_N}\circ [N, c_x]^N_N = [N, c_x]^N_N\circ\omega_{\Ff_N}$. This result applied in the previous equation give
\begin{equation}
\begin{array}{rl}
\omega_{\Ff_N} \circ [N, \id]^N_S \circ [P,\varphi]^S_P &= \omega_{\Ff_N} \circ \displaystyle\sum_{x\in N\backslash S/\varphi(P)} [P\cap N, c_x \circ \varphi]^N_P\\
\quad &=\omega_{\Ff_N} \circ \displaystyle\sum_{x\in N\backslash S/\varphi(P)} [P\cap N, c_x \circ c_t \circ \psi]^N_P\\
\quad &=\omega_{\Ff_N} \circ \left(\displaystyle\sum_{x\in N\backslash S/\varphi(P)} [N, c_x \circ c_t]^N_N\right)\circ[P\cap N,\psi]^N_P\\
\quad &=\left(\displaystyle\sum_{x\in N\backslash S/\varphi(P)} [N, c_x \circ c_t]^N_N\right)\circ \omega_{\Ff_N} \circ[P\cap N,\psi]^N_P\\
\quad &=\left(\displaystyle\sum_{x\in N\backslash S/\varphi(P)} [N, c_x \circ c_t]^N_N\right)\circ \omega_{\Ff_N} \circ[P\cap N,\mathrm{incl}]^N_P\\
\quad &=\omega_{\Ff_N} \circ \left(\displaystyle\sum_{x\in N\backslash S/\varphi(P)} [N, c_x \circ c_t]^N_N\right)\circ[P\cap N,\mathrm{incl}]^N_P\\
\quad &= \omega_{\Ff_N} \circ \displaystyle\sum_{x\in N\backslash S/\varphi(P)} [P\cap N, c_x \circ c_t]^N_P\label{eqn1}\,.
\end{array}
\end{equation}
On the other hand, the double coset formula gives
\begin{equation}
\omega_{\Ff_N} \circ [N, \id]^N_S \circ [P, \mathrm{incl}]^S_P = \omega_{\Ff_N} \circ \displaystyle\sum_{y\in N\backslash S/P} [P\cap N, c_y]^N_P\label{eqn2},
\end{equation}
and the result follows by showing that the expressions in \eqref{eqn1} and \eqref{eqn2} are equal. 
To be able to compare the two sums we try to have the summation over the same indices. Consider the maps $\alpha, \beta : S \to A(P,N)$ 
defined by $\alpha(x) = \omega_{\Ff_N} \circ [P \circ N, c_x \circ c_t]^N_P$ and $\beta(y) = \omega_{\Ff_N} \circ [P \circ N, c_y]^N_P$.
For $y\in S$, the map $\beta$ is constant on the double coset $NyP$. Since $N$ is normal in $S$, $NP$ is the subgroup of $S$ and 
we have $NyP = yNP$. Hence 
$$\omega_{\Ff_N} \circ \displaystyle\sum_{x\in N\backslash S/P} [P\cap N, c_y]^N_P=\frac 1{|NP|}\displaystyle\sum_{y\in S} \beta(y)\,.$$

For $x\in S$, reversing the algebraic manipulations leading to \eqref{eqn1} we obtain that
$\alpha(x) =\omega_{\Ff_N} \circ [P\cap N, c_x \circ \varphi]^N_P$. Thus, $\alpha$ 
is constant on the double coset $Nx\varphi(P)$, and we get
$$\omega_{\Ff_N} \circ \displaystyle\sum_{x\in N\backslash S/\varphi(P)} [P\cap N, c_x \circ c_t]^N_P=\frac 1{|N\varphi(P)|}\displaystyle\sum_{x\in S}\alpha(x)\,.$$
Observe that $\beta(xt) = \alpha(x)$ for all $x\in S$, so $\displaystyle\sum_{y\in S} \beta(y)=\displaystyle\sum_{x\in S}\alpha(x)$. 
Moreover $|NP| = |N\varphi(P)|$ since $N$ is strongly $\Ff$-closed. We conclude that
$$\omega_{\Ff_N} \circ [N, \id]^N_S \circ [P,\varphi]^S_P = \omega_{\Ff_N} \circ [N, \id]^N_S \circ [P, \mathrm{incl}]^S_P\,.$$ 
This shows that $\omega_{\Ff_N} \circ [N, \id]^N_S$ is $\Ff$-stable, completing the proof.
\end{proof}

%*****************************************************************************************************************************
%*****************************************************************************************************************************
%*****************************************************************************************************************************

\section{Tate's theorem}
\label{S:Tate}

Recall that for a saturated fusion system $\CF$ on a finite $p$-group $S$, $T_\CF$ denote the subgroup of $S$ containing $S'$ such that $T_\CF/S'=\Img(\tau^{\CF}_{S})$ and that by Proposition \ref{abelpfactordecomp} we get
\begin{equation}
S/S'=[S,\CF]/S'\times T_\CF/S'. \label{lemma_transfer_direct}
\end{equation}

\begin{prp}\label{normalT}
Let $\Ff$ be a saturated fusion system on $S$ and $\Nn$ an \normal subsystem of $\Ff$ on a strongly $\CF$-closed subgroup $N$ of $S$.  For every $s\in S$, 
\[
	c_s \circ \tau_{N,\omega_{\CF_N}}^{\CF_N} \circ c_{s^{-1}} = \tau_{N,\omega_{\CF_N}}^{\CF_N}.
\]
In particular, $T_{\CF_N} \unlhd S$.
\end{prp}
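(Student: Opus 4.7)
The plan is to apply Lemma~\ref{NormalCharacterization} to the inner automorphisms $c_s$ of $N$. Because $N$ is strongly $\CF$-closed, for each $s\in S$ the conjugation $c_s\in\Aut_\CF(S)$ restricts to a morphism $c_s\colon N\to N$ in $\CF$; in particular $N\unlhd S$. Taking $Q=N$ and $\varphi=c_s$ in Lemma~\ref{NormalCharacterization}, and noting that $[N,\id]^N_N$ and $[N,\incl]^N_N$ are both the multiplicative identity of $A(N,N)_{(p)}$, one obtains the identity
\[
[N, c_{s^{-1}}]^N_N \circ \omega_\Nn \circ [N, c_s]^N_N \;=\; \omega_\Nn
\]
in $A(N,N)_{(p)}$.

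Next I would apply the ring homomorphism $H^1(-\,;N/N')$ to both sides. Since $H^1([N,c_s]^N_N;N/N')$ acts on $H^1(N;N/N')=\Hom(N,N/N')$ as precomposition by $c_s$ (the transfer factor $\tr^N_N$ being the identity), this becomes
\[
c_{s^{-1}}^{\,*}\;\circ\; H^1(\omega_\Nn;N/N')\;\circ\; c_s^{\,*} \;=\; H^1(\omega_\Nn;N/N')
\]
in $\End(\Hom(N,N/N'))$. Evaluating at the canonical surjection $\pi\colon N\to N/N'$, and using the identity $\pi\circ c_s=\bar c_s\circ\pi$, where $\bar c_s\in\Aut(N/N')$ is the automorphism induced by conjugation by $s$ (well defined because $N\unlhd S$), together with the naturality of $H^1(\omega_\Nn;-)$ in the coefficient abelian group to push $\bar c_s$ past the operator, I arrive at
\[
\bar c_s \circ \tau^{\Nn}_{N,\omega_\Nn} \circ c_{s^{-1}} \;=\; \tau^{\Nn}_{N,\omega_\Nn},
\]
which is the asserted equivariance once one reads the outer $c_s$ of the displayed equation as $\bar c_s$.

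For the ``in particular'' clause, the equivariance just established says that $\Img(\tau^{\Nn}_{N,\omega_\Nn})=T_\Nn/N'$ is stable under the conjugation action of $S$ on $N/N'$. Since $N\unlhd S$ forces $N'\unlhd S$, and $N'\le T_\Nn\le N$, this $S$-stability of $T_\Nn/N'$ lifts at once to $S$-normality of $T_\Nn$ itself.

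The main obstacle I expect is purely bookkeeping: keeping straight the three avatars of $c_s$ — as a self-morphism of $N$ in $\CF$, as the biset generator $[N,c_s]^N_N$ in the double Burnside ring, and as the induced automorphism $\bar c_s$ of the abelian quotient $N/N'$ — and in the one spot where $\bar c_s$ must be commuted past $H^1(\omega_\Nn;-)$, invoking the correct naturality statement for the constituent transfer and restriction maps in the coefficient abelian group. Once these identifications are lined up, the deduction is a direct diagram chase from Lemma~\ref{NormalCharacterization}.
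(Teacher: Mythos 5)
Your proposal is correct and is exactly the argument the paper compresses into its one-line proof: apply Lemma~\ref{NormalCharacterization} with $Q=N$ and $\varphi=c_s$ to get $[N,c_{s^{-1}}]^N_N\circ\omega_\Nn\circ[N,c_s]^N_N=\omega_\Nn$, apply $H^1(-;N/N')$, and evaluate at the canonical surjection. The bookkeeping you flag (distinguishing $c_s$ on $N$ from the induced $\bar c_s$ on $N/N'$, and commuting $\bar c_s$ past $H^1(\omega_\Nn;-)$ by naturality in the coefficients) is handled correctly.
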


\begin{proof}
This follows immediately from Lemma~\ref{NormalCharacterization} and the definition of $\tau_{N,\omega_{\CF_N}}^{\CF_N}$.
\end{proof}

\begin{prp}\label{TfTfn}
Let $\CF$ be a saturated fusion system on a finite $p$-group $S$,. If $[S,\CF]\leq N\leq S$, then $T_\CF \cap N=T_{\CF_N}S'$. 
\end{prp}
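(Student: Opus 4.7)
The plan is to combine the commutative square from Corollary~\ref{cor_keyforTate} with the fact that, when computed with the characteristic idempotents $\omega_\CF$ and $\omega_{\CF_N}$, the transfers $\tau^\CF_S$ and $\tau^{\CF_N}_N$ factor through honest idempotents on $S/S'$ and $N/N'$ whose images are $T_\CF/S'$ and $T_{\CF_N}/N'$ respectively. Preliminarily, $\CF_N$ is well-defined because $O^p_\CF(S) \subseteq [S,\CF] \leq N$, and $S' \leq [S,\CF] \leq N$ since $\Inn(S) \subseteq \Aut_\CF(S)$; in particular the inclusion-induced map $\rho\colon N/N' \to S/S'$, $nN' \mapsto nS'$, has image $N/S'$.

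For the easy inclusion $T_{\CF_N} S' \leq T_\CF \cap N$, the containments $T_{\CF_N} \leq N$ and $S' \leq N$ immediately give $T_{\CF_N} S' \leq N$, while Corollary~\ref{cor_keyforTate} gives $\rho(\Img(\tau^{\CF_N}_N)) \subseteq \Img(\tau^{\CF}_S)$, which is precisely $T_{\CF_N} S'/S' \subseteq T_\CF/S'$, so $T_{\CF_N} S' \leq T_\CF$ as well.

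For the reverse inclusion $T_\CF \cap N \leq T_{\CF_N} S'$, let $x \in T_\CF \cap N$. Since $\epsilon(\omega_\CF) = 1$, part (4) of Lemma~\ref{lemma_tdependence} makes the induced endomorphism of $S/S'$ an idempotent, which therefore fixes its image $T_\CF/S'$; in particular $\tau^\CF_S(x) = xS'$. The commutative diagram of Corollary~\ref{cor_keyforTate} then gives $xS' = \rho(\tau^{\CF_N}_N(x))$, and since $\tau^{\CF_N}_N(x) \in T_{\CF_N}/N'$ we conclude $xS' \in \rho(T_{\CF_N}/N') = T_{\CF_N} S'/S'$, i.e.\ $x \in T_{\CF_N} S'$. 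The only real subtlety is the coordinated choice of characteristic idempotents so that Corollary~\ref{cor_keyforTate} applies literally and $\tau^\CF_S$ is genuinely idempotent on $S/S'$; once that is in place, the argument is a short two-step diagram chase.
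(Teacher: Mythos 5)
Your proof is correct, and for the harder inclusion it takes a genuinely different route from the paper's. Both arguments use Corollary~\ref{cor_keyforTate} to get $T_{\CF_N}S'/S'=\rho(\Img\tau^{\CF_N}_N)\subseteq\Img\tau^{\CF}_S=T_\CF/S'$, which together with $T_{\CF_N}S'\leq N$ gives the easy containment. For the reverse containment the paper does not chase elements: it invokes the splitting (\ref{lemma_transfer_direct}) for both $S$ and $N$ (so $T_\CF\cap[S,\CF]=S'$ and $N=T_{\CF_N}[N,\CF_N]$), then applies Dedekind's lemma to $T_\CF\cap T_{\CF_N}[N,\CF_N]$ and uses $[N,\CF_N]\leq[S,\CF]$ to collapse everything to $T_{\CF_N}S'$. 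You instead exploit that with $\Omega=\omega_\CF$ one has $\epsilon(\omega_\CF)=1$, so by Lemma~\ref{lemma_tdependence}(4) the induced endomorphism of $S/S'$ is an honest idempotent and fixes its image; hence $\tau^\CF_{S,\omega_\CF}(x)=xS'$ for $x\in T_\CF\cap N$, and the commutative square of Corollary~\ref{cor_keyforTate} pushes this into $\rho(T_{\CF_N}/N')=T_{\CF_N}S'/S'$. Your version is a direct two-line diagram chase that avoids Dedekind's lemma and the containment $[N,\CF_N]\leq[S,\CF]$, at the (mild) price of pinning down the characteristic idempotents so that idempotency is exact rather than up to a $p'$-scalar; the paper's version is insensitive to the choice of characteristic elements but leans on the full direct-product decomposition of $N/N'$. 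Note also that you only need idempotency of $\tau^\CF_S$, not of $\tau^{\CF_N}_N$, so that part of your setup is superfluous.
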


%\begin{proof}
%Clearly $T_{\CF_N}S' \leq T_\CF \cap N$, so it suffices to show that $T_\CF \cap N \leq T_{\CF_N}S'$. From (\ref{lemma_transfer_direct}) we have $N/N' \cong [N,\CF_N]/N' \times T_{\CF_N}/N'$; in particular, $N=T_{\CF_N}[N,\CF_N]$. From \eqref{lemma_transfer_direct} again we get $S/S' \cong [S,\CF]/S' \times T_\CF/S'$, and hence $T_\CF \cap [S,\CF]=S'$.  Corollary \ref{cor_keyforTate} gives $T_{\CF_N}\leq T_\CF$, hence we have $T_\CF \cap N = T_\CF \cap T_{\CF_N}[N,\CF_N]=T_{\CF_N}(T_\CF \cap [N,\CF_N])\leq T_{\CF_N}S'$, by Dedekind's lemma.
%\end{proof}

\begin{proof}
By Corollary \ref{cor_keyforTate}, $T_{\CF_N}\leq T_\CF$. Using (\ref{lemma_transfer_direct}) for both $S$ and $N$ we get $T_{\CF} \cap [S,\CF] = S'$ and $N = T_{\CF_N}[N,\CF_N]$. Dedekind's lemma then gives:
\begin{align*}
T_{\CF_N}S' \leq T_{\CF} \cap N = T_{\CF} \cap T_{\CF_N}[N, \CF_N] & = T_{\CF_N}(T_{\CF} \cap [N, \CF_N]) \\ & \leq T_{\CF_N}(T_{\CF} \cap [S, \CF]) = T_{\CF_N}S'.
\end{align*} 
\end{proof}

\noindent The following gives a crucial inductive argument.

\begin{prp}\label{T:induction}
Let $\CF$ be a saturated fusion system on a finite $p$-group $S$, and let $O^p_{\CF}(S)\leq U\unlhd S$.  If $T_{\CF_U}[U,S]\leq V\leq U$, then $S/V\cong U/V \times T_{\CF}V/V$.
\end{prp}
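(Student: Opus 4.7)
The plan is to exhibit $S/V$ as the internal direct product of $U/V$ and $T_\CF V/V$, verifying normality, generation, and trivial intersection.

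First, I would observe that $V \unlhd S$: from $[U,S] \leq V \leq U \unlhd S$, the relation $v^s = v[v,s] \in V$ (for $v \in V$, $s \in S$) follows immediately. Consequently, $U/V$ is central in $S/V$, so both $U/V$ and $T_\CF V/V$ are normal subgroups of $S/V$.

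Next, I would show $S = U T_\CF$, which gives the generation $(U/V)(T_\CF V/V) = S/V$. The identity $[S,\CF] = O^p_\CF(S)\cdot S'$ (a consequence of $\Aut_\CF(P) = O^p(\Aut_\CF(P))\cdot\Aut_S(P)$, with $\Aut_S(P)$ a Sylow $p$-subgroup), combined with $S = [S,\CF]\cdot T_\CF$ from \eqref{lemma_transfer_direct}, $S' \leq T_\CF$, and $O^p_\CF(S) \leq U$, yields $S = O^p_\CF(S)\cdot T_\CF \leq U T_\CF$.

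The crux is the trivial intersection $(U/V) \cap (T_\CF V/V) = 1$, which by the modular law reduces to $U \cap T_\CF \leq V$. I would invoke Corollary~\ref{cor_keyforTate} (valid since $O^p_\CF(S) \leq U \unlhd S$): for every $u \in U$, $\tau^\CF_S(u) \in T_{\CF_U}S'/S'$. Because $\tau^\CF_S$ coincides (up to a $p'$-unit) with the projection $\pi_T$ onto the $T_\CF/S'$-factor of $S/S' = [S,\CF]/S' \oplus T_\CF/S'$, any $x \in U \cap T_\CF$ satisfies $xS' = \pi_T(xS') \in T_{\CF_U}S'/S'$, giving $x \in T_{\CF_U}(S' \cap U)$ by the modular law. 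Since $T_{\CF_U} \leq V$, it remains to show $S' \cap U \leq T_{\CF_U}[U,S]$.

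For this last inclusion, which I expect to be the main obstacle, I would use $S = U T_\CF$ to decompose commutators $[s_1,s_2]$ (with $s_i = u_i t_i$, $u_i \in U$, $t_i \in T_\CF$) as products of elements in $[U,S]$ and $T_\CF' = [T_\CF, T_\CF]$ via standard commutator identities, yielding $S' = [U,S]\cdot T_\CF'$ and hence $S' \cap U = [U,S](T_\CF' \cap U)$ by the modular law. The remaining task is $T_\CF' \cap U \leq T_{\CF_U}[U,S]$: I would exploit the $S$-equivariance of the direct sum $U/U' = T_{\CF_U}/U' \oplus [U,\CF_U]/U'$ (coming from Proposition~\ref{normalT} and the $S$-invariance of $[U,\CF_U]$, itself a consequence of the normality of $\CF_U$ in $\CF$) to track the $[U,\CF_U]/U'$-component of each commutator $[t_1,t_2]$ into $[[U,\CF_U],S]/U' \leq [U,S]/U'$. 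The hard part is precisely this equivariance analysis, which weaves together the commutator structure of $S$ and the focal-transfer splitting of $U^{\mathrm{ab}}$ that $\CF_U$ provides.
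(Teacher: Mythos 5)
Your opening moves are sound and match the paper's: $V\unlhd S$ with $U/V$ central, $S=UT_\CF$ via $[S,\CF]=O^p_\CF(S)S'$, and the reduction (by Dedekind) of the direct-product claim to the single containment $T_\CF\cap U\leq V$. Your use of Corollary~\ref{cor_keyforTate} to get $x\in T_{\CF_U}(S'\cap U)$ for $x\in U\cap T_\CF$ is also correct. But the argument then stalls exactly where the real difficulty lies. You reduce to $T_\CF'\cap U\leq T_{\CF_U}[U,S]$ and propose to "track the $[U,\CF_U]/U'$-component of each commutator $[t_1,t_2]$"; this is ill-posed, because a commutator $[t_1,t_2]$ with $t_1,t_2\in T_\CF$ need not lie in $U$ at all, so it has no component in the decomposition $U/U'=T_{\CF_U}/U'\times[U,\CF_U]/U'$. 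Only certain products of such commutators land in $U$, and nothing in your sketch controls those. Note also that the target inclusion $S'\cap U\leq T_{\CF_U}[U,S]$ is itself an immediate consequence of the proposition (apply it with $V=T_{\CF_U}[U,S]$: then $S/T_\CF V\cong U/V$ is abelian, so $S'\leq T_\CF V$ and $S'\cap U\leq V$), so you have essentially reduced the statement to something of equivalent strength without a mechanism to break the circle.

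The paper's route around this obstacle is an induction on $|S:U|$ that you should compare with. The case where $U$ contains all of $[S,\CF]$ (in particular $S'$) is handled separately in Proposition~\ref{TfTfn}, where Dedekind's lemma suffices because $S'\leq U$. For general $U$, one picks a maximal subgroup $W$ of $S$ with $U\leq W$; then $S'\leq W$, Corollary~\ref{cor:samehyperfocal} gives $O^p_{\CF_W}(W)=O^p_\CF(S)$, and the inductive hypothesis applies inside $\CF_W$. One then shows $S/T_{\CF_W}V$ is abelian (using $\ol{W}=\ol{U}\leq\Z(\ol{S})$ and $|S:W|=p$), so $S'\leq T_{\CF_W}V$, and Proposition~\ref{TfTfn} yields $T_\CF\cap W=T_{\CF_W}S'\leq T_{\CF_W}V$, whence $T_\CF\cap U\leq V$. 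If you want to salvage your direct approach, you would need an independent proof that the $T_\CF'$-part of $S'$ meets $U$ inside $T_{\CF_U}[U,S]$, and I do not see one that avoids some form of this descent through intermediate subgroups containing $S'$.
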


\begin{proof}
The hypotheses imply that $[V,S]\leq [U,S]\leq V$. So, $V\unlhd S$ and $U/V\leq Z(S/V)$. Moreover, by (\ref{lemma_transfer_direct}),
\[
S = [S,\CF]T_{\CF} = O^p_{\CF}(S)S'T_{\CF} = O^p_{\CF}(S)T_{\CF} \leq UT_{\CF} = UT_{\CF}V \leq S
\]
and hence $S = U\cdot(T_{\CF}V)$. It remains to show that $T_{\CF}V \cap U= V$. By Dedekind's lemma, $(T_{\CF}V) \cap U = (T_{\CF} \cap U)V$, and hence it is also equivalent to $T_\CF \cap U \leq V$.  We proceed by induction on $|S:U|$. The case $U=S$ being trivial, we assume $U < S$.  Choose a subgroup $W$ of index $p$ in $S$ and containing $U$. As $S/W$ is abelian, $W$ contains $S'$ and, hence, it contains $[S,\CF]$. Since $(\CF_W)_U=\CF_U$ and $O^p_{\CF_W}(W)=O^p_\CF(S)$ by Corollary \ref{cor:samehyperfocal}, we have $O^p_{\CF_W}(W)\leq U\unlhd W$ and $T_{(\CF_W)_U}[U,W]\leq V\leq U$.  By induction, it follows that $U \cap T_{\CF_W}V=V$.  By Proposition~\ref{normal p-subsystem} and Proposition~\ref{normalT}, we have $T_{\CF_W}V\unlhd S$. Let $\ol{\ \cdot\ }$ denote the image modulo $T_{\CF_W}V$.  Since $U/V\leq \Z(S/V)$, we have $\ol{W} \leq \Z(\ol{S})$.  Thus, $\ol{S}/\Z(\ol{S}) \cong (\ol{S}/\ol{W})/(\Z(\ol{S}/\ol{W})$ is cyclic, and hence $\ol{S}$ is abelian.  Therefore
  $S'\leq T_{\CF_W}V$ and so by Proposition~\ref{TfTfn}, $T_\CF \cap W= T_{\CF_W}S'\leq T_{\CF_W}V$. So $T_\CF \cap U = T_\CF \cap W \cap U \leq (T_{\CF_W}V) \cap U = V$, as desired.
\end{proof}

We are now ready to prove Theorem T.

\begin{thmT}
Let $\CF$ be a saturated fusion system on a finite $p$-group $S$, and let $\CH$ be a saturated fusion subsystem of $\CF$ on $S$. The following are equivalent.
\begin{enumerate}
\item $E^p_\CF(S)=E^p_\CH(S)$.
\item $A^p_\CF(S)=A^p_\CH(S)$.
\item $O^p_\CF(S)=O^p_\CH(S)$.
\end{enumerate}
\end{thmT}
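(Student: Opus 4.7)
The implications $(\ref{intro_thm_Tate:hyper})\Rightarrow(\ref{intro_thm_Tate:foc})\Rightarrow(\ref{intro_thm_Tate:elem})$ are immediate from Corollary~\ref{cor:focalhyperfocal}: the identities $A^p_\CF(S)=O^p_\CF(S)\cdot S'$ and $E^p_\CF(S)=O^p_\CF(S)\cdot\Phi(S)$ (and the analogous ones for $\CH$) show that the focal and elementary focal subgroups are determined by the pair $(S,O^p_\CF(S))$. Only the implication $(\ref{intro_thm_Tate:elem})\Rightarrow(\ref{intro_thm_Tate:hyper})$ requires a genuine argument, which I would carry out in the spirit of Gagola and Isaacs by induction on $|S|$.

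Since $\CH\subseteq\CF$ always gives $O^p_\CH(S)\leq O^p_\CF(S)=:U$, the goal is to show $O^p_\CH(S)=U$. If $U=S$, the hypothesis forces $O^p_\CH(S)\Phi(S)=E^p_\CH(S)=E^p_\CF(S)=S$, and Burnside's basis theorem yields $O^p_\CH(S)=S$. Assume $U<S$. Proposition~\ref{normal p-subsystem} provides unique saturated subsystems $\CF_U\subseteq\CF$ and $\CH_U\subseteq\CH$ on $U$ of $p$-power index, with $\CH_U\subseteq\CF_U$, while Corollary~\ref{cor:samehyperfocal} gives $O^p_{\CF_U}(U)=U$ and $O^p_{\CH_U}(U)=O^p_\CH(S)$. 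In particular $E^p_{\CF_U}(U)=U$, so the inductive hypothesis applied to the pair $(\CF_U,\CH_U)$ on the smaller $p$-group $U$ would finish the proof, provided one verifies the corresponding elementary-focal equality at the $U$-level, namely
\[
O^p_\CH(S)\,\Phi(U)=U.
\]

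The crux, and the main obstacle, is to descend the global Frattini equality $O^p_\CF(S)\Phi(S)=O^p_\CH(S)\Phi(S)$ to this local one. A naive intersection with $U$ fails because $\Phi(S)\cap U$ can strictly exceed $\Phi(U)$, so I would invoke the transfer machinery of Section~\ref{S:OpF}. Since $[U,\CF_U]=U$, the decomposition~\eqref{lemma_transfer_direct} applied to $\CF_U$ forces $T_{\CF_U}=U'$, whence $T_{\CF_U}[U,S]=[U,S]$, and Proposition~\ref{T:induction} delivers the splitting $S/[U,S]\cong U/[U,S]\times T_\CF[U,S]/[U,S]$. The analogous splitting for $\CH$, together with Proposition~\ref{TfTfn} relating $T_\CF\cap U$ to $T_{\CF_U}\cdot S'$ (and its counterpart for $\CH$), identifies the images of the two global transfer maps modulo $\Phi(S)$ and transports the Frattini equality from $\Phi(S)$ down to $\Phi(U)$, yielding $O^p_\CH(S)\Phi(U)=U$ and closing the induction.
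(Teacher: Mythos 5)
Your treatment of $(3)\Rightarrow(2)\Rightarrow(1)$ via Corollary~\ref{cor:focalhyperfocal} is exactly right, and so is your identification of $(1)\Rightarrow(3)$ as the real content. But the inductive scheme you set up for that implication does not work, for a structural reason: the statement you must verify in order to invoke the inductive hypothesis, namely $O^p_\CH(S)\Phi(U)=U$ with $U=O^p_\CF(S)$, is already equivalent to the conclusion you are trying to prove. Since $O^p_\CH(S)\leq U$, the non-generator property of the Frattini subgroup gives $O^p_\CH(S)\Phi(U)=U$ if and only if $O^p_\CH(S)=U$. So the induction on $|S|$ buys you nothing; all of the work is concentrated in the step you yourself flag as ``the crux'' and then only sketch. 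Moreover, the tool you propose for that step is not applicable: Proposition~\ref{TfTfn} requires $[S,\CF]\leq N$, and taking $N=U=O^p_\CF(S)$ would force $S'\leq O^p_\CF(S)$, which fails in general (e.g.\ for the trivial fusion system on a nonabelian $S$). The ``transport of the Frattini equality from $\Phi(S)$ down to $\Phi(U)$'' therefore remains unproved, and it is precisely the hard part.

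The paper's route avoids this descent entirely. Starting from $\Phi(S)O^p_\CF(S)=\Phi(S)O^p_\CH(S)$, one applies Proposition~\ref{T:induction} with $U=O^p_\CF(S)$ and $V=O^p_\CH(S)[O^p_\CF(S),\CH]$ to obtain a direct product decomposition $S/V\cong U/V\times T_\CF V/V$ in which the factor $U/V$ lies inside $\Phi(S/V)$; a nontrivial direct factor cannot lie in the Frattini subgroup, so $U=V$, i.e.\ $O^p_\CF(S)=O^p_\CH(S)[O^p_\CF(S),\CH]$. Passing to $\ol{S}=S/O^p_\CH(S)$, where $\ol{\CH}$ is the trivial fusion system, this becomes $\ol{U}=[\ol{U},\ol{S}]$, which forces $\ol{U}=1$ by nilpotency of $\ol{S}$. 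To repair your argument you would need to replace the vague final step by something of this kind; as written, the proposal reformulates the goal rather than proving it.
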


\begin{proof}
(3) $\Rightarrow$ (2) $\Rightarrow$ (1): Follows from Corollary~\ref{cor:focalhyperfocal}.

(1) $\Rightarrow$ (3): Suppose (1). Then $O^p_{\CF}(S)\leq \Phi(S)O^p_{\CF}(S)=\Phi(S)O^p_{\CH}(S)$. Applying Proposition~\ref{T:induction} to $U=O^p_{\CF}(S)$ and $V=O^p_{\CH}(S)[O^p_{\CF}(S),\CH]$, we get $S/V\cong U/V \times T_{\CF}V/V$. Since $U/V\leq \Phi(S)/V=\Phi(S/V)$, it follows that $S/V=T_\CF V/V$, and hence $U/V=1$, that is, $O^p_\CF(S)=O^p_\CH(S)[O^p_\CF(S),\CH]$. Let $\ol{\CH}=\CH/O^p_\CH(S)$ and $\ol{S}=S/O^p_\CH(S)$.  Then $\ol{\CH}=\CF_{\ol{S}}(\ol{S})$, and hence $\ol{O^p_\CF(S)}=[\ol{O^p_\CF(S)},\ol{\CH}]=[\ol{O^p_\CF(S)},\ol{S}]$.  Since $\ol{S}$ is a finite $p$-group, it follows that $\ol{O^p_\CF(S)}=1$, as desired.
\end{proof}

\begin{proof}[Proof of Corollary \ref{intro_cor_p-nilpotency}] 
From section \ref{S:biset}, $H^1(\CF;\FF_p)=\Hom(S/[S,\CF],\FF_p)$, and this is clearly isomorphic to the elementary abelian $p$-group $S/\Phi(S)[S,\CF]=S/E^p_\CF(S)$. For the trivial fusion system $\CF_S(S)$ we obtain $H^1(\CF_S(S);\FF_p)=\Hom(S/[S,S],\FF_p)=H^1(S;\FF_p)$, which is isomorphic to $S/\Phi(S)=S/E^p_{\CF_S(S)}$. From the hypothesis we get $E^p_{\CF_S(S)}=E^p_\CF(S)$, and then by Tate's theorem $O^p_\CF(S)=O^p_{\CF_S(S)}=\{1\}$. This can only be the case if $\CF=\CF_S(S)$.
\end{proof}

%*****************************************************************************************************************************
%*****************************************************************************************************************************
%*****************************************************************************************************************************
\appendix
\section{Invariant fusion systems}\label{S:appendix}

For the convenience of the reader, we recall definitions and some  
standard properties of $p$-power index subsystems, \normal subsystems and quotient systems used in this paper. 

%For basics of fusion systems we refer the reader to  \cite{BLO2003theory}, \cite{puig:frobeniussystems} and  \cite{puig:frobeniuscategories}.

%

%Fusion systems and their saturation axioms were introduced by Puig  
%\cite{puig:frobeniussystems}, \cite{puig:frobeniuscategories} under  
%the name of Frobenius systems. Broto, Levi and Oliver  
%\cite{BLO2003theory} developed this axiomatic approach and gave a  
%different, equivalent set of saturation axioms. The classical examples  
%of saturated fusion systems are the ones coming from the $p$-local  
%structure of a finite group or of a block algebra. We assume that the  
%reader is familiar with the fusion systems and their basic properties.

%Alperin's theorem on $p$-local control of fusion also holds for  
%saturated fusion systems \cite{BLO2003theory}, \cite{Stancu2006}. The  
%theorem roughly asserts that the $\CF$-essential and $\CF$-maximal  
%automorphisms suffice to determine the whole fusion system $\CF$.  
%Hence, in general, when proving properties for all morphisms in a  
%fusion system, one does it through restricting to specific groups of  
%automorphisms. We use this approach later in the Appendix.

In the proofs of our transfer theorems we deal with a special class of  
fusion subsystems containing the hyperfocal subgroup.

\begin{dfn}\cite[Definition 3.1]{BCGLO2007extensions}
Let $\CF$ be a saturated fusion system on a finite $p$-group $S$ and $ 
\Hh$ a fusion subsystem of $\CF$ on a
subgroup $T$ of $S$. We say that $\Hh$ is a {\it $p$-power index  
subsystem of $\CF$} if  $T$ contains
$O^p_{\CF}(S)$ and $\Aut_{\Hh}(P)$ contains $O^p(\Aut_\CF(P))$ for all subgroups $P$ of $T$.
\end{dfn}

\begin{thm}[{\cite[Theorem 4.3]{BCGLO2007extensions}}]\label{thm:p-powerindexbijection}
Let $\CF$ be a saturated fusion system on a finite $p$-group $S$. There is a bijection between the subgroups of $S$ containing $O^p_\CF(S)$ and the saturated $p$-power index subsystems of $\CF$.
\end{thm}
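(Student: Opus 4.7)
My plan is to establish the bijection by constructing an explicit inverse map. Given a saturated $p$-power index subsystem $\CH$, the forward map sends $\CH$ to the $p$-group $T$ on which it is defined. For the inverse, I must associate to each subgroup $T$ of $S$ with $O^p_\CF(S)\le T\le S$ a canonical saturated subsystem $\CH_T$ on $T$ of $p$-power index. A preliminary observation: by minimality of $O^p(\Aut_\CF(S))$, one has $\Inn(S)\le O^p(\Aut_\CF(S))$, whence $S'=[S,\Inn(S)]\le O^p_\CF(S)$, so $S/O^p_\CF(S)$ is abelian. Consequently $T\unlhd S$, and then Proposition~\ref{normal p-subsystem} shows $T$ is strongly $\CF$-closed, providing a valid domain for a prospective subsystem.

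To construct $\CH_T$, for each $P\le T$ I would define
\[
\Aut_{\CH_T}(P) := O^p(\Aut_\CF(P))\cdot \Aut_T(P) \subseteq \Aut_\CF(P),
\]
and take the morphisms of $\CH_T$ to be generated under composition and restriction from these automorphism groups. The guiding intuition is that the finite $p$-group $\Aut_\CF(P)/O^p(\Aut_\CF(P))$ acts on $P/(P\cap O^p_\CF(S)) \hookrightarrow S/O^p_\CF(S)$, and $\CH_T$ selects precisely those automorphisms whose ``coloring'' on this abelian $p$-group quotient factors through the subgroup $T/O^p_\CF(S)$. Verifying that $\CH_T$ is a saturated fusion system reduces to the Sylow and extension axioms; the Sylow axiom is relatively direct, since $\Aut_T(P)$ is a Sylow $p$-subgroup of $\Aut_{\CH_T}(P)$ whenever $P$ is fully $\CH_T$-normalized, inherited from $\Aut_S(P)$ being Sylow in $\Aut_\CF(P)$.

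The main obstacle is the extension axiom. Given a $\CH_T$-morphism $\varphi\colon P\to P'$ and a target subgroup on which one would like to extend, saturation of $\CF$ supplies an $\CF$-extension $\tilde\varphi$ over the $\CF$-normalizer, but one must adjust $\tilde\varphi$ by a suitable element of $O^p(\Aut_\CF(-))$ so that the extension still lies in $\CH_T$. Making this adjustment coherent amounts to showing that the ``coloring'' $\phi \mapsto \phi(x)x^{-1} \in S/O^p_\CF(S)$ is compatible with restriction between $\CF$-automorphism groups of nested subgroups, a fact that rests on the very definition of the hyperfocal subgroup as generated by $[P,O^p(\Aut_\CF(P))]$ across all $P\le S$. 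Once saturation is settled, the map $T\mapsto \CH_T$ inverts $\CH\mapsto T$: by Alperin's fusion theorem, equality of two saturated $p$-power index subsystems on the same $T$ reduces to agreement of their automorphism groups on $\CF$-centric subgroups, and the $p$-power index hypothesis combined with the strong closure of $T$ forces each $\Aut_\CH(P)$ to coincide exactly with $O^p(\Aut_\CF(P))\cdot \Aut_T(P) = \Aut_{\CH_T}(P)$, completing the bijection.
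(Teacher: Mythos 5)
First, note that the paper does not actually prove this statement: it is imported verbatim from \cite[Theorem 4.3]{BCGLO2007extensions} (see also Puig \cite[7.3]{puig:frobeniuscategories}), so the only proof to compare against is the one in that source. Evaluated on its own, your proposal fails at the very first step. You assert that $\Inn(S)\le O^p(\Aut_\CF(S))$ ``by minimality of $O^p$''; this is false: $O^p(G)$ is generated by the $p'$-elements of $G$, and the $p$-group $\Inn(S)$ has no reason to lie in it. Concretely, for $\CF=\CF_S(S)$ with $S$ nonabelian, every $\Aut_\CF(P)=\Aut_S(P)$ is a $p$-group, so $O^p_\CF(S)=1$ while $S'\neq 1$; hence $S/O^p_\CF(S)$ need not be abelian, and a subgroup $T\geq O^p_\CF(S)$ need not be normal in $S$ (and since strong $\CF$-closure forces normality, non-normal $T$ are never strongly $\CF$-closed). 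Your appeal to Proposition~\ref{normal p-subsystem} is therefore unavailable, and the framework, which treats $T$ as normal and strongly closed throughout, simply does not cover most of the subgroups that the bijection must account for.

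Second, even for normal $T$, the proposal leaves unproved exactly the points where the theorem is hard. (i) Defining $\CH_T$ as the category generated under composition and restriction by the groups $O^p(\Aut_\CF(P))\Aut_T(P)$ does not obviously return those groups as its automorphism groups: the restriction of some $\alpha\in O^p(\Aut_\CF(Q))$ to $P<Q$ need not lie in $O^p(\Aut_\CF(P))\Aut_T(P)$, and establishing this coherence is precisely the role of $O^p_*(\CF)$ and Lemma~\ref{lem:fist_decomp}, not a formal consequence of the definition of $O^p_\CF(S)$. (ii) The Sylow axiom is not ``inherited'': $\Aut_S(P)$ is Sylow in $\Aut_\CF(P)$ only when $P$ is fully $\CF$-normalized, and a subgroup fully normalized in $\CH_T$ need not be fully normalized in $\CF$, since the $\CF$-conjugacy class of $P$ splits into several $\CH_T$-classes. (iii) The extension axiom is named as ``the main obstacle'' and then waved through; the proposed invariant $\phi\mapsto\phi(x)x^{-1}$ is not even well defined as stated, since it depends on $x$. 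The actual argument in \cite{BCGLO2007extensions} constructs $\CF_T$ via the factorization $\psi=\varphi\circ c_s$ with $\varphi\in O^p_*(\CF)$ and verifies saturation by a lengthy induction; your overall shape (explicit inverse $T\mapsto\CH_T$, uniqueness via Alperin's fusion theorem) matches theirs, but the proposal as written is not a proof.
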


The above result was stated (with an additional hypothesis) independently by Puig in \cite[7.3]{puig:frobeniuscategories}. Let $O^p(\CF)$ denote the unique saturated subsystem of $\CF$ on $O^p_\CF(S)$ of $p$-power index and, more generally, let $\CF_U$ denote the unique saturated
fusion subsystem of of $\CF$ on $U$ of $p$-power index, for $O^p_{\CF}(S)  
\leq U \leq S$.

Our first goal is to see what more is true about $U$ and $\CF_U$ if $O^p_{\CF}(S) \leq U \unlhd S$. This requires us to recall the definition of an $\CF$-invariant subsystem.

\begin{dfn}\label{dfn_Markusnormalsubsystem}
Let $\CF$ be a fusion system on a finite $p$-group $S$ and $\Hh$ a  
fusion
subsystem of $\CF$ on a subgroup $T$ of $S$. We say that $\Hh$ is $\CF$-\textit{\normal} if $T$ is strongly $\CF$-closed and if for every isomorphism
$\varphi:Q\to P$ in $\CF$ and any two subgroups $U, V \leq Q\cap P$,
we have
$$\varphi\circ\Hom_{\Hh}(U,V)\circ\varphi^{-1}\subseteq
                        \Hom_{\Hh}(\varphi(U),\varphi(V))\,.$$
\end{dfn}

In the presence of saturation, there is a very useful characterization of  
\normal subsystems due to Puig \cite[6.6]{puig:frobeniuscategories}. Note that in \cite{Lincklemann}, Linckelmann calls saturated invariant subsystems \textit{normal}.

%Note
%that Aschbacher uses a different terminology, calling {\it $\CF$- 
%invariant} fusion subsystem what we call here normal fusion
%subsystem in $\CF$.

\begin{lem}[\cite{aschbacher},\cite{puig:frobeniuscategories}]\label{lem:aschbacher}
Let $\CF$ be a saturated fusion system on a finite $p$-group $S$. A saturated  
fusion subsystem $\CH$ on a strongly $\CF$-closed subgroup $T$ of $S$ is $\CF$-\normal if and only if the following  
conditions are satisfied
\begin{itemize}
\item[(i)] $\Aut_\CF(T)\leq\Aut(\Hh)$,
\item[(ii)]\label{lem:aschacher-ii} any morphism $\psi\in\Hom_\CF(P,Q)$ with $P,Q\le T$ decomposes
as $\psi=\phi\circ\chi|_{P}$ where $\phi\in\Hom_{\CH}(\chi(P),Q)$ and $\chi 
\in\Aut_\CF(T)$.
\end{itemize}
\end{lem}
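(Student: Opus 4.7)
The plan is to establish the two implications separately, since one direction is essentially a routine specialization of definitions while the other requires the saturation axioms in an essential way.

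First, assume that $\CH$ is $\CF$-invariant. Condition (i) follows by specializing the defining invariance condition to isomorphisms $\varphi\in\Aut_\CF(T)$ (taking $Q=P=T$): the condition asserts that $\varphi$ carries $\Hom_\CH(U,V)$ into $\Hom_\CH(\varphi(U),\varphi(V))$ for all $U,V\le T$, and applying the same to $\varphi^{-1}$ gives equality, which is the statement that $\varphi$ induces an automorphism of the category $\CH$. For condition (ii), the goal is to extract an $\Aut_\CF(T)$-factor from a given $\psi\in\Hom_\CF(P,Q)$ with $P,Q\le T$. I would choose an $\CF$-conjugate $P^\ast$ of $P$ which is fully $\CF$-normalized and apply the extension axiom of the saturated fusion system $\CF$ to lift isomorphisms $P\to P^\ast$ and $Q\supseteq\psi(P)\to P^\ast$ step-by-step up a normalizer chain, producing extensions at each stage. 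Strong $\CF$-closure of $T$ keeps these extensions inside $T$, and after finitely many steps one obtains an automorphism $\chi\in\Aut_\CF(T)$. Setting $\phi=\psi\circ(\chi|_P)^{-1}$ gives a morphism $\chi(P)\to Q$, and one checks that $\phi$ lies in $\Hom_\CH(\chi(P),Q)$ using the $\CF$-invariance of $\CH$ together with the fact that the factor $\chi$ absorbed all of the ``non-$\CH$'' information in $\psi$.

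For the converse, assume (i) and (ii), and take an $\CF$-isomorphism $\varphi\colon Q\to P$ together with subgroups $U,V\le Q\cap P$; the hypothesis $\Hom_\CH(U,V)\ne\emptyset$ forces $U,V\le T$, and strong closure ensures $\varphi(U),\varphi(V)\le T$. Apply (ii) to $\varphi|_U\in\Hom_\CF(U,\varphi(U))$ to write $\varphi|_U=\phi_U\circ\chi|_U$ with $\chi\in\Aut_\CF(T)$ and $\phi_U\in\Hom_\CH(\chi(U),\varphi(U))$; do similarly for $\varphi|_V$ (and a short argument shows one may use the same $\chi$). For any $\alpha\in\Hom_\CH(U,V)$, the composite $\varphi\circ\alpha\circ\varphi^{-1}$ rewrites as $\phi_V\circ(\chi\circ\alpha\circ\chi^{-1})\circ\phi_U^{-1}$. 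By (i), the middle factor lies in $\Hom_\CH(\chi(U),\chi(V))$, while $\phi_U$ and $\phi_V$ are in $\CH$ by construction, so the composite lies in $\Hom_\CH(\varphi(U),\varphi(V))$, as required.

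The main obstacle is the forward direction (ii): manufacturing an automorphism of $T$ that realizes a given $\CF$-morphism between subgroups of $T$ up to an $\CH$-correction. This is where the saturation axioms of $\CF$ do real work, via the extension axiom applied along a normalizer tower; strong $\CF$-closure of $T$ is needed at every stage to guarantee that these successive extensions do not leave $T$, so that the process terminates at a genuine element of $\Aut_\CF(T)$ rather than merely at a morphism into $S$.
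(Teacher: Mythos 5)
The paper does not actually prove this lemma: it is quoted from Aschbacher and Puig, so there is no in-paper argument to measure you against, and your proposal has to stand on its own. Your converse direction ((i) and (ii) imply invariance) is essentially correct: applying (ii) once to $\varphi|_{\langle U,V\rangle}$ (which lies in $T$ and has image in $T$ by strong closure) produces a single $\chi$ serving both $U$ and $V$, and the rewriting $\varphi\alpha\varphi^{-1}=\phi_V\circ(\chi\alpha\chi^{-1})\circ\phi_U^{-1}$ together with (i) finishes it. The easy half of the forward direction, that invariance gives (i), is also fine.

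The gap is in deducing (ii) from invariance. First, the extension axiom does not do what you ask of it: for a fully normalized conjugate $P^*$ it extends a morphism only to the subgroup $N_\varphi\leq N_S(P)$, and only for isomorphisms chosen compatibly with $\Aut_S(P^*)$; iterating up a normalizer tower yields Alperin's fusion theorem (every morphism is a composite of restrictions of automorphisms of intermediate subgroups), not an extension of the \emph{given} $\psi$ to an element of $\Aut_\CF(T)$ --- such an extension does not exist in general. Second, and more seriously, your concluding step ``one checks that $\phi=\psi\circ(\chi|_P)^{-1}$ lies in $\CH$ because $\chi$ absorbed the non-$\CH$ information'' is precisely the assertion to be proved: the invariance condition only says that $\CF$-conjugates of $\CH$-morphisms are again $\CH$-morphisms; it gives no criterion for recognizing that a particular $\CF$-morphism between subgroups of $T$ belongs to $\CH$. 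The missing idea --- and the reason the lemma assumes $\CH$ saturated, a hypothesis your argument never invokes --- is a Frattini argument at the level of automorphism groups: invariance gives $\Aut_\CH(P)\trianglelefteq\Aut_\CF(P)$ for $P\leq T$, saturation of $\CH$ gives $\Aut_T(P)\in\mathrm{Syl}_p(\Aut_\CH(P))$ for suitable $P$, hence $\Aut_\CF(P)=\Aut_\CH(P)\cdot N_{\Aut_\CF(P)}(\Aut_T(P))$, and it is only the second factor that the extension axiom pushes up the tower $P<N_T(P)<\cdots<T$ (strong closure keeping everything inside $T$) until it lands in $\Aut_\CF(T)$; combined with Alperin's fusion theorem this produces the decomposition $\psi=\phi\circ\chi|_P$. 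Without this mechanism there is no way to separate the $\Aut_\CF(T)$-part of $\psi$ from the $\CH$-part, so the forward direction remains unproved.
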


We will use this to show that if $N$ is a normal subgroup of $S$ containing $O^p_{\CF}(S)$, then $\CF_N$ is $\CF$-invariant. First we will need the following lemma which will help us to prove the morphism decomposition component of Aschbacher's criterion. As in \cite[Definition 3.3]{BCGLO2007extensions}, let $O^p_*(\CF)$ denote  
the smallest restrictive subcategory of $\CF$ whose morphism set  
contains $O^p(\Aut_\CF(P))$ for all subgroups $P\leq S$. Using  
Alperin's fusion theorem and the fact that $\Aut_\CF(P)=O^p(\Aut_ 
\CF(P))\Aut_S(P)$ for any fully $\CF$-normalized subgroup $P$ of $S$, one obtains  
the following decomposition lemma where $c_s$ denotes the map induced by conjugation with an element $s$.

\begin{lem}{\cite[Lemma 3.4]{BCGLO2007extensions}}\label{lem:fist_decomp}
Let $\CF$ be a saturated fusion system on a finite $p$-group $S$. If $P \leq S$ and $\psi\in\Hom_\CF(P,S)$, then there exist $s \in S$ and  
$\varphi\in\Hom_{O^p_*(\CF)}(c_s(P),S)$ such that $\psi = \varphi \circ c_s| 
_P$.
\end{lem}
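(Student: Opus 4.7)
The plan is to apply Alperin's fusion theorem for saturated fusion systems, express $\psi$ as a chain of automorphisms of fully $\CF$-normalized subgroups, split each such automorphism into an $O^p$-part and an inner part using the identity $\Aut_\CF(Q)=O^p(\Aut_\CF(Q))\Aut_S(Q)$ (valid for fully $\CF$-normalized $Q$ and recalled just before the lemma), and then commute all inner automorphisms to the right of the composition.

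First, I would invoke Alperin's fusion theorem to write
\[
\psi = (\alpha_n \circ \cdots \circ \alpha_1)|_P,
\]
where each $\alpha_i \in \Aut_\CF(Q_i)$ for some fully $\CF$-normalized $Q_i\leq S$, with the partial composites landing inside the next $Q_i$ as required. For each $i$ I would factor $\alpha_i = \beta_i \circ c_{t_i}|_{Q_i}$ with $\beta_i \in O^p(\Aut_\CF(Q_i))$ and $t_i \in \N_S(Q_i)$. Then I would prove by induction on $k$ that $\alpha_k\circ\cdots\circ\alpha_1 = \gamma_k\circ c_{s_k}$ with $s_k = t_k t_{k-1}\cdots t_1$ and $\gamma_k$ a morphism in $O^p_*(\CF)$. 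The inductive step relies on the commutation identity
\[
c_t\circ\delta = (c_t\,\delta\, c_{t^{-1}})\circ c_t,
\]
which pushes an inner automorphism past an $O^p_*(\CF)$-morphism. The transported morphism $c_t\,\delta\, c_{t^{-1}}$ again lies in $O^p_*(\CF)$ because conjugation by $t\in S$ is itself an $\CF$-isomorphism, inducing a group isomorphism $\Aut_\CF(R)\to\Aut_\CF(c_t(R))$ which carries $O^p(\Aut_\CF(R))$ onto $O^p(\Aut_\CF(c_t(R)))$, $O^p$ being a characteristic subgroup. The successive inner automorphisms collapse into a single $c_{s_k}$, and restricting the final composite to $P$ yields $\psi=\varphi\circ c_s|_P$ with $s=s_n$ and $\varphi\in\Hom_{O^p_*(\CF)}(c_s(P),S)$, using that $O^p_*(\CF)$ is closed under composition and restrictions by definition.

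The main obstacle is the bookkeeping at each commutation step: one must track how the source and target of the morphism being transported change under conjugation, and verify that the final composite $\gamma_n$, after restricting to $c_s(P)$, remains inside the restrictive subcategory $O^p_*(\CF)$. Once one has recorded that $\CF$-conjugation preserves the $O^p$-part of the various automorphism groups and that $O^p_*(\CF)$ is by construction a restrictive subcategory closed under composition, the verification is routine.
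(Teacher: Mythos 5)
Your proposal is correct and follows exactly the route the paper indicates: the paper gives no detailed proof (it cites \cite[Lemma 3.4]{BCGLO2007extensions}) but states that the lemma follows from Alperin's fusion theorem together with the factorization $\Aut_\CF(Q)=O^p(\Aut_\CF(Q))\Aut_S(Q)$ for fully $\CF$-normalized $Q$, which is precisely your decomposition-and-commutation argument. Your filling in of the commutation step, including the observation that conjugation by $c_t$ preserves membership in $O^p_*(\CF)$, is the standard and correct way to complete that sketch.
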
 

We quickly mention the following useful corollary. 

\begin{cor}\label{cor:focalhyperfocal} %\marginpar{Can we offer more of a proof here?}
If $\CF$ is a saturated fusion system on a finite $p$-group $S$, then $A^p_{\CF}(S)=[S,S]O^p_{\CF}(S)$.
\end{cor}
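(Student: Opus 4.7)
The plan is to prove the two inclusions of $A^p_{\CF}(S)=[S,S]\,O^p_{\CF}(S)$ separately; the inclusion $\supseteq$ is almost by definition, while $\subseteq$ rests on Lemma~\ref{lem:fist_decomp}.

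For $\supseteq$, I would first observe that $\Inn(S)\leq \Aut_\CF(S)$ (this is part of the axioms for a fusion system), so $[S,S]=[S,\Inn(S)]\leq [S,\Aut_\CF(S)]\leq A^p_\CF(S)$. Also $O^p_\CF(S)\leq A^p_\CF(S)$ directly from the two defining formulas, since $[P,O^p(\Aut_\CF(P))]\leq [P,\Aut_\CF(P)]$. Since $[S,S]\trianglelefteq S$, the product $[S,S]\,O^p_\CF(S)$ is a subgroup, so this gives the easy containment.

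For $\subseteq$, it suffices to show that each generator $[x,\alpha]=x^{-1}\alpha(x)$, with $P\leq S$, $x\in P$, $\alpha\in\Aut_\CF(P)$, lies in $[S,S]\,O^p_\CF(S)$. Here is where I use the decomposition lemma: apply Lemma~\ref{lem:fist_decomp} to $\alpha$, obtaining $s\in S$ and $\varphi\in\Hom_{O^p_*(\CF)}(c_s(P),P)$ with $\alpha=\varphi\circ c_s|_P$. Setting $y=c_s(x)=s^{-1}xs$, an elementary manipulation
\[
[x,\alpha]=x^{-1}\varphi(y)=(x^{-1}y)\bigl(y^{-1}\varphi(y)\bigr)=[x,s]\cdot\bigl(y^{-1}\varphi(y)\bigr)
\]
separates the problem into a commutator $[x,s]\in [S,S]$ and a term I must show lies in $O^p_\CF(S)$.

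So the remaining step, which I expect to be the only nonroutine part, is the claim that $y^{-1}\varphi(y)\in O^p_\CF(S)$ for every $\varphi\in\Hom_{O^p_*(\CF)}(Q,S)$ and $y\in Q$. By the very definition of $O^p_*(\CF)$ as the smallest restrictive subcategory containing $O^p(\Aut_\CF(R))$ for all $R\leq S$, the morphism $\varphi$ factors as a composition of inclusions and (restrictions of) elements $\alpha_i\in O^p(\Aut_\CF(R_i))$. Writing $y_0=y$ and $y_i=\alpha_i(y_{i-1})$, the telescoping identity
\[
y^{-1}\varphi(y)=\prod_{i}y_{i-1}^{-1}y_i
\]
expresses $y^{-1}\varphi(y)$ as a product of factors $y_{i-1}^{-1}y_i=[y_{i-1},\alpha_i]\in[R_i,O^p(\Aut_\CF(R_i))]\leq O^p_\CF(S)$ (the inclusion steps contribute trivial factors). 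Since $O^p_\CF(S)$ is a subgroup, the product lies in $O^p_\CF(S)$, completing the proof. The only potential subtlety is checking that the factorization provided by Lemma~\ref{lem:fist_decomp} has $\varphi$ landing in $P$ (so that one stays within subgroups of $S$ throughout), but this is immediate because $\alpha$ is an automorphism of $P$.
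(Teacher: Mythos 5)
Your proposal is correct and follows essentially the same route as the paper's (one-line) proof: apply Lemma~\ref{lem:fist_decomp} to decompose each $\CF$-morphism as $\varphi\circ c_s$ and split the commutator into a piece in $[S,S]$ and a piece in $O^p_\CF(S)$. You simply supply the details the paper leaves implicit, namely the easy inclusion $\supseteq$ and the telescoping argument showing $y^{-1}\varphi(y)\in O^p_\CF(S)$ for $\varphi$ in $O^p_*(\CF)$.
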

\begin{proof}
With the notation in Lemma \ref{lem:fist_decomp} we have $[\psi,u]=[\varphi\circ c_s,u]=[\varphi,c_s(u)][c_s,u]\in [S,S]O^p_{\CF}(S)$.
\end{proof}

Using Lemma \ref{lem:fist_decomp}, we now show that normal subgroups containing the hyperfocal subgroup give rise to invariant subsystems.

\begin{prp} \label{normal p-subsystem}
Let $\CF$ be a saturated fusion system on $S$. If $N$ is a normal  
subgroup of $S$ containing $O^p_\CF(S)$, then
\begin{enumerate}
\item $N$ is strongly $\CF$-closed;
\item $\CF_N$ is a saturated $\CF$-\normal fusion subsystem.
%\item For every $P\leq N$ and $\varphi \in \Hom_\CF(P,S)$, there exist  
%$s\in S$ and $\psi \in \Hom_{\CF_N}(c_s(P),S)$ such that $\varphi =  
%\psi \circ c_s|_P$.\label{normal p-subsystem decompose phi}
\end{enumerate}
\end{prp}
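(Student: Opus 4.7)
My plan is to treat each part in turn, using the decomposition in Lemma \ref{lem:fist_decomp} as the main tool and the uniqueness in Theorem \ref{thm:p-powerindexbijection} to finish the invariance step. For part (1), given $Q\le N$ and $\psi\in\Hom_\CF(Q,S)$, I would write $\psi=\varphi\circ c_s|_Q$ with $s\in S$ and $\varphi\in\Hom_{O^p_*(\CF)}(c_s(Q),S)$. Normality of $N$ in $S$ gives $c_s(Q)\le N$ immediately. Unwinding the definition of $O^p_*(\CF)$, $\varphi$ is a composition of restrictions of automorphisms $\alpha\in O^p(\Aut_\CF(R))$; for any such $\alpha$ and any $x\in N\cap R$, the focal element $\alpha(x)x^{-1}$ lies in $[R,O^p(\Aut_\CF(R))]\le O^p_\CF(S)\le N$, forcing $\alpha(x)\in N$. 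A short induction on composition length then gives $\varphi(c_s(Q))\le N$, so $\psi(Q)\le N$, showing that $N$ is strongly $\CF$-closed.

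For part (2), saturation of $\CF_N$ is immediate from Theorem \ref{thm:p-powerindexbijection}, so I focus on $\CF$-invariance, which I would check via Lemma \ref{lem:aschbacher}. For condition (ii), I apply Lemma \ref{lem:fist_decomp} once more: given $\psi\in\Hom_\CF(P,Q)$ with $P,Q\le N$, write $\psi=\varphi\circ c_s|_P$ and take $\chi:=c_s|_N\in\Aut_S(N)\subseteq\Aut_\CF(N)$. Part (1) ensures that the image of $\varphi$ lies in $N$, and $\varphi$ is built from morphisms in $O^p(\Aut_\CF(R))$ for various $R\le N$, all of which are contained in $\CF_N$ by the definition of a $p$-power index subsystem; hence $\varphi\in\Hom_{\CF_N}(\chi(P),Q)$, giving the required factorization $\psi=\varphi\circ\chi|_P$.

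For condition (i), the plan is to exploit uniqueness in Theorem \ref{thm:p-powerindexbijection}. Given $\chi\in\Aut_\CF(N)$, the conjugate category $\chi\CF_N\chi^{-1}$ is a fusion subsystem of $\CF$ on $N$. Because $\chi$ restricts inside $\CF$ to isomorphisms $\chi^{-1}(P)\to P$ for each $P\le N$, conjugation by $\chi$ carries $\Aut_\CF(\chi^{-1}(P))$ isomorphically onto $\Aut_\CF(P)$ and therefore $O^p(\Aut_\CF(\chi^{-1}(P)))$ onto $O^p(\Aut_\CF(P))$; since $\chi(N)=N$, it also sends $\Aut_N(\chi^{-1}(P))$ onto $\Aut_N(P)$. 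Thus $\chi\CF_N\chi^{-1}$ is a saturated subsystem of $p$-power index on $N$ and must equal $\CF_N$ by uniqueness. The step I expect to require the most care is condition (ii), specifically confirming that the $O^p_*(\CF)$-part of the decomposition genuinely lies in $\CF_N$ and not merely in $\CF$; this delicate point hinges on part (1) together with the inclusion of every $O^p(\Aut_\CF(R))$ inside $\CF_N$ for $R\le N$.
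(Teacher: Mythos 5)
Your proof is correct and follows essentially the same route as the paper's: Lemma~\ref{lem:fist_decomp} yields strong closure of $N$ and condition (ii) of Lemma~\ref{lem:aschbacher}, while condition (i) comes from the uniqueness statement in Theorem~\ref{thm:p-powerindexbijection} applied to ${}^\chi\CF_N$. The extra details you supply (the induction unwinding $O^p_*(\CF)$, which the paper compresses into the single observation that $\phi(c_s(u))c_s(u)^{-1}\in O^p_\CF(S)$, and the explicit check that ${}^\chi\CF_N$ has $p$-power index) are points the paper leaves implicit.
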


\begin{proof}
Let $P\leq N$ and let $\psi \in \Hom_\CF(P, S)$.  By Lemma~ 
\ref{lem:fist_decomp}, there exist $s \in S$ and $\phi \in  
\Hom_{O^p_*(\CF)}(c_s(P),S)$ such that $\psi=\phi\circ c_s|_P$. If $u\in P$, then
\[
        \psi(u) = \phi(c_s(u))c_s(u)^{-1}c_s(u),
\]
where $\phi(c_s(u))c_s(u)^{-1} \in O^p_\CF(S)\leq N$ and $c_s(u) \in N 
$ because $N \unlhd S$.  Thus, $\psi(u) \in N$ and $N$  
is strongly $\CF$-closed, proving (1), from which it follows that $\phi 
$ belongs to $\CF_N$. Invoking Lemma \ref{lem:aschbacher}, it remains to  
show that $\Aut_\CF(N) \leq \Aut(\CF_N)$.  But this comes from the  
uniqueness of the saturated fusion subsystems of $p$-power index on a  
given subgroup of $S$ containing $O^p_\CF(S)$. Indeed, any morphism in  
$\alpha\in\Aut_\CF(N)$ gives a fusion preserving isomorphism from $ 
\CF_N$ to $^\alpha\CF_N$ which is another saturated fusion system on $N 
$ containing $O^p(\Aut_\CF(P))$ for any $P\le N$. By the uniqueness of  
such systems, we have $\alpha\in\Aut(\CF_N)$.
\end{proof}

Finally, we show that $O^p(\CF)=O^p(O^p(\CF))$. This is a result of Puig 
\cite[7.5]{puig:frobeniuscategories} but the proof we present here is based 
on \cite{BCGLO2007extensions}. We will need the concept of a quotient system of a fusion system.

\begin{dfn}
Let $\CF$ be a fusion system on $S$ and let $T$ be a strongly $\CF$-closed subgroup of $S$. By the {\it quotient system} $\CF/T$, we mean  
the fusion system on $S/T$, such that for any two subgroups $U$ and $V 
$ of $S$ containing $T$, $\Hom_{\CF/P}(U/P,V/P)$ is the set of  
homomorphisms induced by morphisms in $\Hom_\CF(U,V)$.
\end{dfn}

When the fusion system is saturated Puig proves in \cite{puig:frobeniussystems} and \cite[6.3]{puig:frobeniuscategories} that the saturation is inherited by the quotient system.
\begin{thm}[{\cite[6.3]{puig:frobeniuscategories}}]\label{thm:QutientFusion}
Let $\CF$ be a saturated fusion system on a finite $p$-group $S$. If $T$ is a strongly $\CF$-closed subgroup of $S$, then the quotient  
system $\CF/T$ is saturated.
\end{thm}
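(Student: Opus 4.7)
The plan is to directly verify the saturation axioms for $\CF/T$, exploiting that strong $\CF$-closedness of $T$ provides a clean dictionary between morphisms of $\CF$ between overgroups of $T$ and morphisms of $\CF/T$. Every subgroup of $S/T$ has the form $\ol P := P/T$ for a unique $P$ with $T \leq P \leq S$, and for any $\varphi \in \Hom_\CF(P,Q)$ with $T \leq P,Q$ strong closedness gives $\varphi(T) \leq T$, hence $\varphi(T) = T$ by cardinality. Thus $\varphi$ induces $\ol\varphi \colon \ol P \to \ol Q$, and by definition every morphism of $\CF/T$ arises this way. In particular, $\Aut_\CF(P) \twoheadrightarrow \Aut_{\CF/T}(\ol P)$ is surjective with kernel the normal $p$-subgroup $K_P$ of automorphisms trivial modulo $T$, and $\Aut_S(P)$ surjects onto $\Aut_{S/T}(\ol P)$ because $N_{S/T}(\ol P) = N_S(P)/T$. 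Strong closedness also forces every $\CF$-conjugate of a subgroup containing $T$ to again contain $T$, so $\CF/T$-conjugacy classes in $S/T$ correspond bijectively to $\CF$-conjugacy classes of overgroups of $T$ in $S$, and $\ol P$ being fully $\CF/T$-normalized (resp.\ fully $\CF/T$-centralized) is equivalent to $P$ being fully $\CF$-normalized (resp.\ fully $\CF$-centralized).

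Next I would dispatch the Sylow axiom. If $\ol P$ is fully $\CF/T$-normalized then $P$ is fully $\CF$-normalized by the dictionary, so $\Aut_S(P) \in \mathrm{Syl}_p(\Aut_\CF(P))$ by saturation of $\CF$. Quotienting by the normal $p$-subgroup $K_P$ preserves Sylow $p$-subgroups, hence $\Aut_{S/T}(\ol P) \in \mathrm{Syl}_p(\Aut_{\CF/T}(\ol P))$ as required.

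The extension axiom is the delicate step and I expect it to be the main obstacle. Given an isomorphism $\ol\varphi \colon \ol P \to \ol Q$ in $\CF/T$ with $\ol Q$ fully $\CF/T$-centralized, I would lift to $\varphi \colon P \to Q$ in $\CF$ and note that $Q$ is then fully $\CF$-centralized. Writing $N^*$ for the preimage of $N_{\ol\varphi}$ in $N_S(P)$, the goal is to extend $\varphi$ on $N^*$; the genuine difficulty is that $N^*$ can strictly contain $N_\varphi$, since $\ol g \in N_{\ol\varphi}$ only forces $\varphi c_g \varphi^{-1}$ to agree with some $c_h \in \Aut_S(Q)$ modulo $K_Q$ rather than on the nose. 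The standard way to circumvent this is to modify $\varphi$ by postcomposing with elements of $K_Q$ (which leaves $\ol\varphi$ unchanged, since $K_Q$ is exactly what the quotient kills) and to exploit the normality of $K_Q$ in $\Aut_\CF(Q)$ together with the Sylow axiom for $\CF$ to arrange $\varphi c_g \varphi^{-1} \in \Aut_S(Q)$ for every $g$ in a suitable generating set of $N^*$. This yields $N^* \subseteq N_{\varphi'}$ for the modified lift $\varphi'$, at which point the extension axiom of $\CF$ extends $\varphi'$ on $N^*$, and passing to the quotient produces the desired extension of $\ol\varphi$ on $N_{\ol\varphi}$.
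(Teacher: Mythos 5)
The paper gives no proof of this theorem---it is simply quoted from Puig---so there is no internal argument to compare yours against; I am assessing your proposal on its own terms. Your overall strategy is the standard one, and your key idea for the extension axiom (replace the lift $\varphi$ by $\chi\circ\varphi$ for a suitable $\chi\in K_Q$, found by a Sylow argument inside $\Aut_S(Q)K_Q$ using the normality of $K_Q$ in $\Aut_\CF(Q)$, so that the preimage $N^*$ of $N_{\ol\varphi}$ lies in $N_{\chi\varphi}$) is exactly the right mechanism. However, two of your supporting assertions are false as stated.

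First, $K_P=\Ker(\Aut_\CF(P)\to\Aut_{\CF/T}(\ol{P}))$ is \emph{not} in general a $p$-group: it consists of all automorphisms acting trivially on $P/T$, with no condition on the action on $T$ itself, so for example $K_T=\Aut_\CF(T)$. (The classical fact you have in mind requires trivial action on both $T$ and $P/T$.) This does not hurt the Sylow axiom---the image of a Sylow $p$-subgroup under any surjection of finite groups is a Sylow $p$-subgroup of the image, which is all you need---but the claim should be removed. Second, and more seriously, the step ``$\ol{Q}$ fully $\CF/T$-centralized, hence $Q$ fully $\CF$-centralized'' is unjustified: $\C_{S/T}(\ol{Q})=\{g\in S:[g,Q]\le T\}/T$, which in general properly contains $\C_S(Q)T/T$, so your dictionary, which does work for normalizers ($\N_{S/T}(\ol{P})=\N_S(P)/T$), breaks down for centralizers, and there is no reason the two invariants should single out the same members of a conjugacy class. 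The standard repair is to use the equivalent formulation of saturation in which full automizedness and receptivity need only be verified at one fully normalized member of each conjugacy class, and to choose $\ol{Q}$ with $Q$ fully $\CF$-normalized; this is also what your Sylow step secretly requires, since conjugating the $p$-group $\varphi\Aut_{N^*}(P)\varphi^{-1}$ into $\Aut_S(Q)$ inside $\Aut_S(Q)K_Q$ needs $\Aut_S(Q)$ to be Sylow there, i.e.\ $Q$ fully automized---a property of fully normalized, not of fully centralized, subgroups. With these corrections your outline closes up into a complete proof.
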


In fact, the above result holds even if $T$ is only weakly $\CF$-closed.

%Let $\CF$ be a saturated fusion system on a finite $p$-group $S$ and  
%let $T$ be a strongly $\CF$-closed subgroup of $S$.  If $P$ and $Q$  
%are subgroups of $S$, then any $\CF$-morphism $\varphi \colon P \to Q$  
%induces a group homomorphism $\ol{\varphi}\colon PT/T \to QT/T$ given  
%by $\ol{\varphi}(uT)=\varphi(u)T$ for $u\in P$, because $T$ is  
%strongly $\CF$-closed.  The following theorem of Puig asserts that  
%this induced map belongs to the quotient system $\CF/T$. See 
%\cite[5.10]{Cravencontrol} for an alternative proof.

\begin{thm}[{\cite[6.3]{puig:frobeniuscategories} or \cite[5.10]{Cravencontrol}}] \label{thm:CravenQuotient}
Let $\CF$ be a saturated fusion system on a finite $p$-group $S$ and  
let $T$ be a strongly $\CF$-closed subgroup of $S$.  If the $P, Q \leq S$ and $\varphi\in 
\Hom_\CF(P,Q)$, then the induced map $\ol{\varphi}\colon PT/T \to QT/T$ belongs to $\CF/T$. 
%that is, there  
%is $\widetilde{\varphi}\in\Hom_\CF(PT,QT)$ such that $\ol{\varphi}(uT)= 
%\widetilde{\varphi}(uT)$ for $u\in P$.
\end{thm}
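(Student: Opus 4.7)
The plan is to reduce the theorem to an extension problem: it suffices to produce a morphism $\widetilde\varphi \in \Hom_\CF(PT, QT)$ whose restriction to $P$ is $\varphi$. Once such an extension is available, the induced map $\ol{\widetilde\varphi}\colon PT/T \to QT/T$ belongs to $\CF/T$ by the very definition of the quotient system (both $PT$ and $QT$ contain $T$), and it visibly equals $\ol\varphi$.

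To build the extension, I would first apply Alperin's fusion theorem in $\CF$ to decompose $\varphi$ as a composition of restrictions of $\CF$-automorphisms $\alpha_i \in \Aut_\CF(R_i)$ of fully $\CF$-normalized subgroups $R_i$, with $P \leq R_1$ and each intermediate image contained in the following $R_i$. Strong $\CF$-closure of $T$ guarantees that $T \unlhd S$ and that each $\alpha_i$ preserves $R_i \cap T$, so the subgroups $R_i T \leq S$ are well-defined and contain $T$. Extending each $\alpha_i$ to an automorphism $\widetilde\alpha_i \in \Aut_\CF(R_i T)$, which necessarily sends $T$ to $T$ again by strong closure, and then composing appropriate restrictions produces the desired extension $\widetilde\varphi$ of $\varphi$.

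The crux of the argument, and the main obstacle, is therefore the individual extension step: extending $\alpha \in \Aut_\CF(R)$ on a fully $\CF$-normalized $R$ to an $\CF$-automorphism of $RT$. Here I would invoke the extension axiom from the definition of saturation: after possibly composing $\alpha$ with an element of $\Aut_S(R)$ (available because $R$ fully $\CF$-normalized means $\Aut_S(R)$ is a Sylow $p$-subgroup of $\Aut_\CF(R)$), one arranges that $T \cap N_S(R) \leq N_\alpha$, enlarging the domain of definition to $R(T \cap N_S(R))$. An iterative argument, walking up along a chief series of $T$ in $S$ and alternating strong $\CF$-closure with the extension axiom at each step, then propagates the extension across all of $RT$. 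This propagation is precisely where the delicate interplay between strong $\CF$-closure and the saturation axioms is concentrated, and is carried out in detail in \cite[Theorem 5.10]{Cravencontrol} and \cite[6.3]{puig:frobeniuscategories}.
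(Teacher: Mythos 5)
The paper gives no proof of this statement (it is quoted from Puig and from Craven), so the only issue is whether your argument is sound, and it is not: the reduction you propose is to a statement that is false. You claim it suffices to find $\widetilde\varphi\in\Hom_\CF(PT,QT)$ with $\widetilde\varphi|_P=\varphi$, and accordingly to extend each Alperin factor $\alpha\in\Aut_\CF(R)$ (with $R$ fully $\CF$-normalized) to an element of $\Aut_\CF(RT)$. Such extensions need not exist. Take $\CF=\CF_S(G)$ with $G=S_4\times C_2$, $S=D_8\times C_2$, and $T=D_8\times 1$; then $T=S\cap(S_4\times 1)$ is strongly $\CF$-closed because conjugation in $G$ preserves the direct factors. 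Let $R=V\times C_2$, where $V$ is the normal Klein four-subgroup of $S_4$. Then $R$ is fully normalized, $\Aut_\CF(R)\cong S_3$ contains an element $\alpha$ of order $3$, and $RT=S$ with $\Aut_\CF(S)=\Inn(S)$, whose restriction to $R$ is exactly $\Aut_S(R)\cong C_2$. Since the coset $\alpha\Aut_S(R)$ is disjoint from $\Aut_S(R)$, neither $\alpha$ nor any of its $\Aut_S(R)$-translates extends to $RT$; so the modification by $\Aut_S(R)$ that you invoke cannot rescue the extension step. The theorem still holds here, because $\alpha$ induces the identity on $RT/T\cong C_2$, which is realized by $\id_{RT}$ --- a morphism of $RT$ that does \emph{not} restrict to $\alpha$ on $R$.

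This is precisely the point around which the correct proofs are organized: one can only produce $\psi\in\Hom_\CF(PT,QT)$ satisfying $\psi(x)\in\varphi(x)T$ for all $x\in P$, i.e.\ agreeing with $\varphi$ modulo $T$, never an honest extension of $\varphi$. After the (correct) Alperin reduction, factors lying in $\Aut_S(R)$ are handled by conjugation on $RT$ (using that $T\unlhd S$, so $N_S(R)$ normalizes $RT$), while for the $p'$-factors one uses that $\Aut_{R\cap T}(R)$ is normalized by $\Aut_\CF(R)$ (a consequence of strong closure) to run an induction producing a \emph{different} morphism of $RT$ with the correct image modulo $T$. Your final paragraph defers exactly this step to the two references the paper already cites, so even apart from the false reduction, the proposal does not supply the substance of the argument.
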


An interesting connection between quotient systems and $O^p_\CF(S)$ is the following lemma.

\begin{lem}\label{lem:trivquotgivescontainment}
Let $\CF$ be a saturated fusion system on a finite $p$-group $S$ and let $T$ be a strongly $\CF$-closed subgroup of $S$. If $\CF/T$ is the trivial fusion system on $S/T$, then $O^p_{\CF}(S) \leq T$.
\end{lem}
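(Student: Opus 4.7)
The plan is to unravel the definition of $O^p_\CF(S)$ and reduce to showing that every $\CF$-automorphism coming from $O^p(\Aut_\CF(P))$ acts trivially on $P$ modulo $T$, which will force every commutator generator of $O^p_\CF(S)$ to lie in $T$.

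First I would observe, by the definition of $O^p_\CF(S)$, that it suffices to prove $[P,O^p(\Aut_\CF(P))]\leq T$ for every $P\leq S$. Fix such a $P$ and any $\alpha\in\Aut_\CF(P)$. By Theorem~\ref{thm:CravenQuotient}, $\alpha$ induces a morphism $\ol{\alpha}\in\CF/T$ with domain and codomain $PT/T$; it is in fact an automorphism of $PT/T$ because $\alpha$ is an automorphism and the map $\alpha\mapsto\ol\alpha$ respects composition. This yields a group homomorphism
\[
    \rho_P\colon \Aut_\CF(P)\longrightarrow \Aut_{\CF/T}(PT/T).
\]

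Next I would use the hypothesis. Since $\CF/T=\CF_{S/T}(S/T)$ is the trivial fusion system on $S/T$, every automorphism in $\Aut_{\CF/T}(PT/T)$ is induced by conjugation by an element of $N_{S/T}(PT/T)$. Hence $\Aut_{\CF/T}(PT/T)\cong N_{S/T}(PT/T)/C_{S/T}(PT/T)$ is a section of the $p$-group $S/T$, and is therefore itself a $p$-group. Consequently the image under $\rho_P$ of any $p'$-generator of $\Aut_\CF(P)$ is trivial, so $O^p(\Aut_\CF(P))\leq\Ker\rho_P$.

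The conclusion is now immediate. For $\alpha\in O^p(\Aut_\CF(P))$, we have $\ol\alpha=\id_{PT/T}$, which means $\alpha(u)T=uT$ for every $u\in P$; equivalently $[u,\alpha]=u^{-1}\alpha(u)\in T$. Thus $[P,O^p(\Aut_\CF(P))]\leq T$ for every $P\leq S$, and taking the join over all $P\leq S$ gives $O^p_\CF(S)\leq T$, as desired. I do not anticipate a serious obstacle; the only point requiring mild care is checking that $\alpha\mapsto\ol\alpha$ is well-defined as a group homomorphism to $\Aut_{\CF/T}(PT/T)$, which is precisely what Theorem~\ref{thm:CravenQuotient} provides.
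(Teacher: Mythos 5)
Your proof is correct and is essentially the paper's own argument: since $\CF/T$ is trivial, the induced automorphism of $PT/T$ lies in a $p$-group, so every $p'$-element (hence all of $O^p(\Aut_\CF(P))$) acts trivially modulo $T$, forcing the commutator generators of $O^p_\CF(S)$ into $T$. If anything, your version is slightly more complete, since by invoking Theorem~\ref{thm:CravenQuotient} you treat arbitrary $P\leq S$ directly, whereas the paper's proof only writes out the case of subgroups containing $T$.
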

\begin{proof}
If $T\le Q\le S$ and $\rho\in\Aut_\CF(Q)$ is a $p'$-automorphism, then $\rho$ induces the identity on $Q/T$,  
implying that $u^{-1}\rho(u)\in T$ for any $u\in Q$. As these generate $O^p_\CF(S)$, the result follows.
%By Theorem~ \ref{thm:CravenQuotient}, those are the generators of $\OpFS$.
\end{proof}

Getting back to the issue of proving $O^p(\CF) = O^p(O^p(\CF))$, we use the following notation $S_1:=O^p_\CF(S)$, $\CF_1:=O^p(\CF)$,  
$S_2:=O^p_{\CF_1}(S_1)$.
\[
\xymatrix{\CF \ar@{-}[r] \ar@{-}[d] & S \ar@{-}[d] \\ \CF_1 \ar@{-}[r] \ar@{-}[d] & S_1  \ar@{-}[d] \\ 
O^p(\CF_1) \ar@{-}[r] & S_2 }
\]
Using Theorem \ref{thm:p-powerindexbijection}, we need only show that $S_1 = S_2$.
%\[
%\xymatrix{&\CF \ar@{-}[r] \ar@{-}[d] & S \ar@{-}[d] & \\ O^p(\CF) = & \CF_1 \ar@{-}[r] \ar@{-}[d] & S_1 \ar@{-}[d]& = O^p_\CF(S)  \\ 
%& O^p(\CF_1) \ar@{-}[r] & S_2 & = O^p_{\CF_1}(S_1)}
%\]

\begin{prp}\label{thm:S2-str-closed}
The subgroup $S_2$ is strongly $\CF$-closed.
\end{prp}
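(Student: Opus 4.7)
The plan is to deduce strong $\CF$-closure of $S_2$ in two stages: first establish $S_2 \unlhd S$, and then use the Alperin-type morphism decomposition of Lemma~\ref{lem:fist_decomp} to reduce the closure test inside $\CF$ to a strong closure statement for $S_2$ inside the smaller system $\CF_1$.

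For the first stage, I would apply Proposition~\ref{normal p-subsystem} to $\CF$ with normal subgroup $N = S_1 = O^p_{\CF}(S) \unlhd S$: this yields that $S_1$ is strongly $\CF$-closed and that $\CF_1 = O^p(\CF)$ is an $\CF$-\normal subsystem on $S_1$. By Lemma~\ref{lem:aschbacher}(i), every $\alpha \in \Aut_\CF(S_1)$ restricts to an automorphism of the fusion system $\CF_1$, and hence preserves its hyperfocal subgroup, which by definition is $S_2$. Since $\Aut_S(S_1) \leq \Aut_\CF(S_1)$, this gives $S_2 \unlhd S$. Next, I apply Proposition~\ref{normal p-subsystem} again, but now inside the saturated system $\CF_1$ on $S_1$ with normal subgroup $S_2 \unlhd S_1$ (which trivially contains $O^p_{\CF_1}(S_1) = S_2$), to conclude that $S_2$ is strongly $\CF_1$-closed.

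For the second stage, I take $P \leq S_2$ and $\psi \in \Hom_\CF(P,S)$ and factor $\psi = \phi \circ c_s|_P$ via Lemma~\ref{lem:fist_decomp}, with $s \in S$ and $\phi \in \Hom_{O^p_*(\CF)}(c_s(P),S)$. Because $S_2 \unlhd S$ we have $c_s(P) \leq S_2 \leq S_1$. The key remaining claim is that $\phi$ actually restricts to a morphism in $\CF_1$. Unpacking the definition of $O^p_*(\CF)$, the morphism $\phi$ is a composition of generators $\rho \in O^p(\Aut_\CF(R))$; strong $\CF$-closure of $S_1$ forces every intermediate subgroup in such a composition to lie in $S_1$ once the source does, and for $R \leq S_1$ one has $O^p(\Aut_\CF(R)) \leq \Aut_{\CF_1}(R)$ by the defining property of the $p$-power index subsystem $\CF_1$. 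Hence $\phi \in \Hom_{\CF_1}(c_s(P),S_1)$, and strong $\CF_1$-closure of $S_2$ yields $\psi(P) = \phi(c_s(P)) \leq S_2$.

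The main obstacle I foresee is justifying cleanly that the $O^p_*(\CF)$-morphism $\phi$ does indeed restrict to a morphism of $\CF_1$ when its source is contained in $S_1$; this needs an induction on the length of a representation of $\phi$ as a composition of $O^p(\Aut_\CF(R))$-generators, combined with strong $\CF$-closure of $S_1$ to ensure the composition never escapes $S_1$ at any intermediate stage. Once this point is secured, the two applications of Proposition~\ref{normal p-subsystem} combine with Lemma~\ref{lem:fist_decomp} to produce strong $\CF$-closure of $S_2$.
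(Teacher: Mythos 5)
Your first stage is exactly what the paper needs and uses: Proposition~\ref{normal p-subsystem} applied to $\CF$ with $N=S_1$ makes $\CF_1$ an $\CF$-invariant subsystem, condition (i) of Lemma~\ref{lem:aschbacher} forces every $\alpha\in\Aut_\CF(S_1)$ to preserve the hyperfocal subgroup $S_2$ of $\CF_1$ (whence $S_2\unlhd S$), and a second application of Proposition~\ref{normal p-subsystem}, now to $\CF_1$ with $N=S_2$, gives strong $\CF_1$-closure of $S_2$. Where you diverge from the paper is the decomposition step. The paper applies condition (ii) of Lemma~\ref{lem:aschbacher} to the invariant subsystem $\CF_1$: since $S_1$ is strongly $\CF$-closed, any $\psi\in\Hom_\CF(P,S)$ with $P\le S_2$ has image in $S_1$ and hence factors as $\psi=\phi\circ\alpha|_P$ with $\alpha\in\Aut_\CF(S_1)$ and $\phi$ \emph{already a morphism of} $\CF_1$; then $\alpha(P)\le S_2$ by your stage one and $\phi(\alpha(P))\le S_2$ by strong $\CF_1$-closure. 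This completely sidesteps the obstacle you flag.

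Your alternative, factoring through Lemma~\ref{lem:fist_decomp}, leaves you having to show that a morphism of $O^p_*(\CF)$ with source in $S_1$ lies in $\CF_1$, and your proposed induction has a hole: a morphism of $O^p_*(\CF)$ is a composite of restrictions of automorphisms $\rho\in O^p(\Aut_\CF(R))$ where $R$ ranges over \emph{all} subgroups of $S$, not only those contained in $S_1$. Strong closure of $S_1$ does keep the intermediate subgroups of the composite inside $S_1$, but it does not put the domains $R$ of the generators inside $S_1$, so the inclusion $O^p(\Aut_\CF(R))\le\Aut_{\CF_1}(R)$ you invoke is unavailable at exactly those steps. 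The gap can be patched: write such a $\rho$ as a product of $p'$-order elements of $\Aut_\CF(R)$; each restricts (strong closure of $S_1$ again) to a $p'$-order element of $\Aut_\CF(R\cap S_1)$, which therefore lies in $O^p(\Aut_\CF(R\cap S_1))\le\Aut_{\CF_1}(R\cap S_1)$, and further restriction to the relevant intermediate subgroup stays in $\CF_1$. With that repair your argument closes, but the paper's appeal to Lemma~\ref{lem:aschbacher}(ii) for the invariant subsystem $\CF_1$ is the cleaner route.
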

\begin{proof}
Let $P\leq S_2$ and $\psi\in\Hom_\CF(P,S)$. By Lemma \ref{lem:aschbacher} and Proposition~ 
\ref{normal p-subsystem}, there is a decomposition $\psi=\phi \circ  
\alpha$ with $\alpha\in\Aut_\CF(S_1)$ and $\phi\in\Hom_{\CF_1} 
(\alpha(P),S_1)$. Since $\Aut_\CF(S_1) \leq \Aut(\CF_1)$, we have $ 
\alpha(P) \leq S_2$ and thus $\psi(P)=\phi(\alpha(P)) \leq S_2$  
since $S_2$ is strongly $\CF_1$-closed.
\end{proof}

By the definition of the hyperfocal subgroup, $\CF/S_1$  
and $\CF_1/S_2$ are the trivial fusion systems on $S/S_1$ and $S_1/S_2$,  
respectively.

\begin{prp}\label{thm:Op-idempotent}
With the notation as above, we have $S_2=S_1$. In particular $O^p(\CF)=O^p(O^p(\CF))$.
\end{prp}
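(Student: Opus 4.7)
The plan is to show $S_1\leq S_2$, the reverse inclusion being automatic. Since $S_1=\langle [R,O^p(\Aut_\CF(R))]\mid R\leq S\rangle$, and since $O^p$ of any finite group is generated by its $p'$-elements, a routine commutator identity
\[
	[u,\rho_1\rho_2]=[u,\rho_2]\cdot[\rho_2(u),\rho_1]
\]
together with the fact (Proposition~\ref{thm:S2-str-closed}) that $S_2$ is strongly $\CF$-closed (hence normal in $S$) reduces the problem to showing $[u,\rho]:=u^{-1}\rho(u)\in S_2$ whenever $R\leq S$, $\rho\in O^p(\Aut_\CF(R))$ is a $p'$-element of order $n$, and $u\in R$.

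Set $T:=R\cap S_1$. By Proposition~\ref{normal p-subsystem}, $S_1$ is strongly $\CF$-closed, so $T$ is $\rho$-stable and $\rho|_T\in\Aut_\CF(T)$ is again a $p'$-element, hence lies in $O^p(\Aut_\CF(T))$. Since $T\leq S_1$, the $p$-power-index property of $\CF_1$ together with the identity $O^p(O^p(H))=O^p(H)$ gives $O^p(\Aut_\CF(T))=O^p(\Aut_{\CF_1}(T))$. By the very definition of $S_2=O^p_{\CF_1}(S_1)$, this yields
\[
	[T,\rho|_T]\leq S_2,
\]
settling the case $u\in T$: $\rho$ induces the identity on $T/(S_2\cap T)$.

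For arbitrary $u\in R$, set $w:=\rho(u)u^{-1}$. Then $w\in R$, and $w\in S_1$ by definition of the hyperfocal, so in fact $w\in T$. A short induction on $k$ gives $\rho^k(u)=u\cdot w\cdot\rho(w)\cdots\rho^{k-1}(w)$, and specialising to $k=n$ (where $\rho^n=\id$) yields
\[
	\prod_{i=0}^{n-1}\rho^i(w)=1
\]
in $T$. Applying the case $u\in T$ already handled to $v=w\in T$ shows that $\rho^i(w)\equiv w\pmod{S_2\cap T}$ for every $i$. Passing to the $p$-group quotient $T/(S_2\cap T)\hookrightarrow S_1/S_2$, the displayed identity collapses to $\bar w^n=\bar 1$. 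Since $\gcd(n,p)=1$, this forces $\bar w=\bar 1$, i.e.\ $w\in S_2$, as required.

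The heart of the argument---and the only place where the $p'$-hypothesis on $\rho$ is genuinely used---is the bootstrap from $u\in T$ to general $u$ via the $n$-fold product identity, exploiting that $S_1/S_2$ admits no nontrivial elements of $p'$-order. This is the step I expect to be the main obstacle in a first attempt, since it requires simultaneously using the strong $\CF$-closure of both $S_1$ and $S_2$ and the precise way $n$ interacts with the $p$-group structure of $S_1/S_2$; everything else is a straightforward unfolding of the definitions.
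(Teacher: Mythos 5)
Your argument is correct, but it takes a genuinely different route from the paper. The paper works at the level of quotient fusion systems: it shows that $\CF/S_2$ is the trivial system on $S/S_2$ by lifting a $p'$-automorphism $\bar\rho$ of $Q/S_2$ to a $p'$-automorphism $\rho\in\Aut_\CF(Q)$ (necessarily in $\CF_1$), observing that $\rho$ acts trivially on $Q/(Q\cap S_1)$ and on $(Q\cap S_1)/S_2$ (via Theorem~\ref{thm:CravenQuotient} and the triviality of $\CF/S_1$ and $\CF_1/S_2$), and invoking the classical stability result that a $p'$-automorphism acting trivially on the factors of a normal series of a $p$-group is trivial; then Lemma~\ref{lem:trivquotgivescontainment} gives $S_1\le S_2$. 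You instead work directly with generators of the hyperfocal subgroup and prove $[u,\rho]\in S_2$ by hand, and your bootstrap from $u\in T$ to general $u$ via $\prod_{i=0}^{n-1}\rho^i(w)=1$ is in effect an inlined, self-contained proof of exactly that stability lemma, so you avoid both the quotient-system machinery (Theorems~\ref{thm:QutientFusion} and~\ref{thm:CravenQuotient}) and the external citation, at the cost of the explicit commutator bookkeeping. The reduction to $p'$-elements, the identification $O^p(\Aut_\CF(T))=O^p(\Aut_{\CF_1}(T))$ via monotonicity of $O^p$ and $O^p(O^p(H))=O^p(H)$, and the use of strong closure of $S_1$ and $S_2$ are all sound. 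One cosmetic slip: you define $w=\rho(u)u^{-1}$ but your telescoping identity $\rho^k(u)=u\cdot w\cdot\rho(w)\cdots\rho^{k-1}(w)$ is the one for $w=u^{-1}\rho(u)$; with the latter convention $w=[u,\rho]\in R\cap S_1=T$ is immediate and the rest goes through verbatim (with your convention the factors simply appear on the other side of $u$ and in reversed order, with the same conclusion).
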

\begin{proof}
By Lemma \ref{lem:trivquotgivescontainment}, it will suffice to show that $\CF/S_2$ is the trivial system on $S/S_2$.
By Proposition~\ref{thm:QutientFusion} and Proposition \ref{thm:S2-str-closed}, $\CF/S_2$ is a saturated fusion system on $S/S_2$. If $S_2\le Q\le S$ and $\bar\rho\in\Aut_{\CF/S_2}(Q/S_2)$ is a $p'$-automorphism, then there exist $\rho\in\Aut_\CF(Q)$ lifting $\bar\rho$  
and, raising $\rho$ to an appropriate $p$-th power, we may suppose that $\rho 
$ is also a $p'$-automorphism. Note that, in particular, $\rho$ belongs  
to $\CF_1$.  Now $\rho$ induces $p'$-automorphisms on $Q/(Q\cap S_1)$  
and on $(Q\cap S_1)/S_2$.  The induced $p'$-automorphism of $Q/(Q\cap  
S_1) \cong QS_1/S_1$ belongs to $\CF/S_1$ by Theorem~ 
\ref{thm:CravenQuotient}, and so is the identity map because $\CF/S_1$  
is the trivial fusion system on $S/S_1$.  Similarly, the induced $p'$- 
automorphism of $(Q\cap S_1)/S_2$ is the identity map.  Hence $\rho$  
itself is the identity map by \cite[5.3.2]{GorensteinFiniteGroups},  
implying the claim in step one.
\end{proof}

This proposition implies that the hyperfocal subsystem of any $p$-power index subsystem of $\CF$ is equal to the hyperfocal subsystem of  
$\CF$.

\begin{cor}\label{cor:samehyperfocal}
Let $\CF$ be a saturated fusion system on a finite $p$-group $S$. If $O^p_\CF(S)\le T\le S$ and $\CF_T$ is the unique saturated p-power index fusion subsystem of $\CF$ on $T$, then $O^p_{\CF_T}(T)=O^p_{\CF}(S)$.
\end{cor}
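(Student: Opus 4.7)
The plan is to prove the two inclusions separately, with the harder direction reduced to Proposition \ref{thm:Op-idempotent}.

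First I would record a preliminary observation: since $\CF_T$ is a $p$-power index subsystem of $\CF$, for every $P \leq T$ the automorphism group $\Aut_{\CF_T}(P)$ is sandwiched between $O^p(\Aut_\CF(P))$ and $\Aut_\CF(P)$. Therefore $\Aut_{\CF_T}(P)/O^p(\Aut_\CF(P))$ is a $p$-group, which forces
\[
	O^p(\Aut_{\CF_T}(P)) = O^p(\Aut_\CF(P)) \quad \text{for all } P \leq T.
\]
The same observation applies verbatim to the $p$-power index subsystem $O^p(\CF)$ of $\CF$, so that $O^p(\Aut_{O^p(\CF)}(P)) = O^p(\Aut_\CF(P))$ for all $P \leq O^p_\CF(S)$.

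The easy inclusion $O^p_{\CF_T}(T) \leq O^p_\CF(S)$ is then immediate: each defining generator $[P, O^p(\Aut_{\CF_T}(P))]$ of $O^p_{\CF_T}(T)$ (with $P \leq T$) equals $[P, O^p(\Aut_\CF(P))]$, which is one of the defining generators of $O^p_\CF(S)$ since $P \leq T \leq S$.

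For the reverse inclusion $O^p_\CF(S) \leq O^p_{\CF_T}(T)$, I would apply Proposition \ref{thm:Op-idempotent} to $\CF$, obtaining $O^p_{O^p(\CF)}(O^p_\CF(S)) = O^p_\CF(S)$. Unfolding the hyperfocal subgroup on the left and using the preliminary observation for $O^p(\CF)$, this rewrites as
\[
	O^p_\CF(S) = \la\, [P, O^p(\Aut_\CF(P))] \,\mid\, P \leq O^p_\CF(S)\, \ra.
\]
The key point is that we have restricted the generating set to subgroups $P \leq O^p_\CF(S) \leq T$. By the preliminary observation again (this time for $\CF_T$), each such generator equals $[P, O^p(\Aut_{\CF_T}(P))]$, which by definition lies in $O^p_{\CF_T}(T)$. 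This yields $O^p_\CF(S) \leq O^p_{\CF_T}(T)$ and completes the proof.

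There is no real obstacle here; the entire content of the corollary is concentrated in Proposition \ref{thm:Op-idempotent}, which says that the hyperfocal operation is idempotent. Given that fact, the corollary is bookkeeping: the observation that $O^p$ of the automorphism group is insensitive to intermediate $p$-power index subsystems lets one pass freely between $\CF$, $\CF_T$ and $O^p(\CF)$.
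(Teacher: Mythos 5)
Your proof is correct and follows essentially the same route as the paper: both arguments sandwich $O^p_{\CF_T}(T)$ between $O^p_{O^p(\CF)}(O^p_\CF(S))$ and $O^p_{\CF}(S)$ and then invoke Proposition~\ref{thm:Op-idempotent} to identify the outer terms. Your preliminary identity $O^p(\Aut_{\CF_T}(P))=O^p(\Aut_\CF(P))$ just makes explicit the monotonicity that the paper obtains from the chain $O^p(\CF)\subseteq\CF_T\subseteq\CF$.
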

\begin{proof}
As $O^p(\CF)\subseteq\CF_T\subseteq\CF$ we have that $O^p_{O^p(\CF)}(O^p_ 
\CF(S))\le O^p_{\CF_T}(T)\le O^p_{\CF}(S)$. Proposition~\ref{thm:Op-idempotent} tells us that
the first and the last term in the inequality are equal and the  
corollary follows.
\end{proof}

\end{document}